\documentclass[10pt]{article}
\usepackage{latexsym,amsfonts,amssymb,amsmath,amsthm}
\usepackage{graphicx}
\usepackage{longtable}
\usepackage{cite}
\usepackage[dvipsnames]{xcolor}
\IfFileExists{bbm.sty}{\usepackage{bbm}}{%
  \let\mathbbm\mathbf}
\usepackage{tikz}
\usetikzlibrary{arrows.meta}
\definecolor{FigBirth}{HTML}{276F9A}
\definecolor{FigDeath}{HTML}{BD6241}
\definecolor{FigPotential}{HTML}{256A73}
\definecolor{FigInk}{HTML}{29343C}
\definecolor{FigGuide}{HTML}{9CA7AD}
\tikzset{
  paper axis/.style={FigInk!85,line width=0.55pt,
    -{Stealth[length=1.7mm,width=1.15mm]}},
  paper guide/.style={FigGuide,line width=0.45pt,dash pattern=on 1.5pt off 2pt},
  paper flow/.style={FigInk,line width=0.8pt,
    -{Stealth[length=1.8mm,width=1.3mm]}}
}

\usepackage{tocloft}
\usepackage{amsfonts}
\usepackage[colorlinks=true,linkcolor=black,urlcolor=blue,citecolor=red]{hyperref}
\hypersetup{linkcolor=blue,pdftitle={Quasi-stationary distributions and sharp metastable asymptotics for birth-death processes with a strong Allee effect},pdfauthor={Tian Hou, Kaige Yan and Dun Zhou}}

\begin{document}
	\setlength{\baselineskip}{13pt}

\parindent 0.5cm
\evensidemargin 0cm \oddsidemargin 0cm \topmargin 0cm \textheight
22cm \textwidth 16cm \footskip 2cm \headsep 0cm
	
\newtheorem{theorem}{Theorem}[section]
\newtheorem{lemma}[theorem]{Lemma}
\newtheorem{proposition}[theorem]{Proposition}
\newtheorem{corollary}[theorem]{Corollary}
\newtheorem{property}[theorem]{Property}

\newtheorem{definition}{Definition}[section]
\newtheorem{example}{Example}[section]
\newtheorem{remark}{Remark}[section]

\numberwithin{equation}{section}

% Main results labeled by capital letters
% Main results use a common alphabetical counter
\newtheorem{mainthm}{Theorem}
\renewcommand{\themainthm}{\Alph{mainthm}}

\newtheorem{mainlem}[mainthm]{Lemma}
\newtheorem{maincor}[mainthm]{Corollary}
\newtheorem{mainremark}[mainthm]{Remark}
\def\p{\partial}
\def\I{\textit}
\def\R{\mathbb{R}}
\def\C{\mathbb{C}}
\def\u{\underline}
\def\l{\lambda}
\def\a{\alpha}
\def\e{\epsilon}
\def\ls{\lambda^*}
\def\D{\displaystyle}
\def\wyx{ \frac{w(y,t)}{w(x,t)}}
\def\imp{\Rightarrow}
\def\tE{\widetilde{E}}
\def\tX{\widetilde{X}}
\def\tH{\widetilde{H}}
\def\tu{\widetilde{u}}
\def\d{\mathcal{D}}
\def\aa{\mathcal{A}}
\def\DH{\mathcal{D}(\tH)}
\def\bE{\bar E}
\def\bH{\bar H}
\def\M{\mathcal{M}}
\renewcommand{\labelenumi}{(\arabic{enumi})}

\def\disp{\displaystyle}
\def\undertex#1{$\underline{\hbox{#1}}$}
\def\card{\mathop{\hbox{card}}}
\def\sgn{\mathop{\hbox{sgn}}}
\def\exp{\mathop{\hbox{exp}}}
\def\OFP{(\Omega,{\cal F},\PP)}
\newcommand\JM{Mierczy\'nski}
\newcommand\RR{\ensuremath{\mathbb{R}}}
\newcommand\CC{\ensuremath{\mathbb{C}}}
\newcommand\QQ{\ensuremath{\mathbb{Q}}}
\newcommand\ZZ{\ensuremath{\mathbb{Z}}}
\newcommand\NN{\ensuremath{\mathbb{N}}}
\newcommand\PP{\ensuremath{\mathbb{P}}}
\newcommand\abs[1]{\ensuremath{\lvert#1\rvert}}

\newcommand\normf[1]{\ensuremath{\lVert#1\rVert_{f}}}
\newcommand\normfRb[1]{\ensuremath{\lVert#1\rVert_{f,R_b}}}
\newcommand\normfRbone[1]{\ensuremath{\lVert#1\rVert_{f, R_{b_1}}}}
\newcommand\normfRbtwo[1]{\ensuremath{\lVert#1\rVert_{f,R_{b_2}}}}
\newcommand\normtwo[1]{\ensuremath{\lVert#1\rVert_{2}}}
\newcommand\norminfty[1]{\ensuremath{\lVert#1\rVert_{\infty}}}

\title{Quasi-stationary distributions and sharp metastable asymptotics for birth--death processes with a strong Allee effect}
\author{
Tian Hou$^1$\footnote{Supported by the National Natural Science Foundation of China (No.12501215) and Basic Research Program of Jiangsu (No.BK20251429).},
Kaige Yan$^1$,
Dun Zhou$^1$\footnote{Corresponding author: zhoudun@njust.edu.cn (D. Zhou). Supported by the National Natural Science Foundation of China (Nos.12671213,12331006).}\\
\vspace{0.5em}
\begin{tabular}{c}
\footnotesize $^1$ School of Mathematics and Statistics, Nanjing University of Science and Technology\\[-2pt]
\footnotesize Nanjing, Jiangsu 210094, P. R. China
\end{tabular}
}
\date{}
\maketitle
% insert the table of contents
%\tableofcontents
	
%---------------------SECTION DIVIDE LINE---------------------------
	
\begin{abstract}
We study a class of density-dependent birth--death processes with a
strong Allee effect and an absorbing extinction state. The associated
deterministic system is bistable: both the extinction state and a positive
equilibrium are locally asymptotically stable, separated
by an unstable Allee threshold. Let $K$ denote the population-size
scaling parameter. As $K\to\infty$, we derive a sharp
Eyring--Kramers-type asymptotic formula, including the leading
prefactor, for the principal eigenvalue of the killed generator, together with
a uniform approximation of the corresponding positive
eigenvector over the entire state space. As a consequence, we obtain
a sharp asymptotic formula for the mean extinction time under the
quasi-stationary distribution.

In the bistable regime, the initial population determines the competition
between early extinction and positive metastability. After the initial relaxation and before
extinction from the positive stable region, the law is approximated in
total variation by an initial-state-dependent convex combination of
the quasi-stationary distribution and the Dirac mass at extinction.
The mixing coefficient is identified precisely as the probability of reaching the positive
stable region before absorption. Moreover, we identify a probabilistic
transition layer of width $O(\sqrt K)$ around the Allee threshold,
within which the mixing coefficient converges to an explicit Gaussian
profile. In particular, a population starting at the threshold
reaches the positive metastable region or becomes extinct during the
initial transient phase with asymptotically equal probabilities.

We further establish existence and uniqueness of the quasi-stationary distribution, quantitative convergence to the quasi-stationary regime, and a discrete Gaussian approximation of the quasi-stationary distribution centered at the
positive stable population level. Our analysis combines a self-adjoint
spectral realization of the killed generator, matched asymptotic analysis of
the associated second-order recurrence, and discrete Laplace
estimates for the reversible weights.
\end{abstract}

\tableofcontents

\section{Introduction}\label{Introduction}

\subsection{Background and motivation}
Stochastic population models provide a natural framework for modeling
demographic fluctuations that are absent from deterministic equations. Let $(X_t)_{t\geq0}$ be a Markov process on a measurable state space $(E,\mathcal{E})$, where
$0\in E$ is the absorbing extinction state. Define
\[
E^*:=E\setminus\{0\},
\quad
T_0:=\inf\{t\geq0:X_t=0\}.
\]
Suppose that extinction occurs almost surely from every nonzero initial state, that is,
\[
\mathbb{P}_x(T_0<\infty)=1,
\quad x\in E^*.
\]
Then
\[
\left\|\mathbb{P}_x(X_t\in\cdot)-\delta_0\right\|_{\mathrm{TV}}=\mathbb{P}_x(T_0>t)\to0
\quad\text{as }t\to\infty
\]
for all $x\in E^*$, where $\delta_0$ denotes the Dirac mass. As a consequence, the unique stationary distribution of such a Markov process is exactly $\delta_0$. Although this stationary distribution describes the eventual fate of the population, it provides no information regarding the potentially long-lived and biologically relevant behavior of the population prior to extinction.

Quasi-stationary distributions provide a natural framework for describing
this pre-extinction regime. For a probability measure $\nu$ on $E^*$,
let
\[
\mathbb{P}_\nu(\,\cdot\,)
:=
\int_{E^*}\mathbb{P}_x(\,\cdot\,)\,\nu(dx)
\]
denote the law of the process with initial distribution $\nu$. The
measure $\nu$ is called a quasi-stationary distribution if, for every
$t>0$,
\[
\mathbb{P}_\nu(T_0>t)>0
\]
and
\[
\mathbb{P}_\nu
\bigl(X_t\in A\mid T_0>t\bigr)
=
\nu(A)
\]
for every measurable set $A\subseteq E^*$. Equivalently,
\[
\mathbb{P}_\nu(X_t\in A,\ T_0>t)
=
\mathbb{P}_\nu(T_0>t)\nu(A).
\]
Thus, a quasi-stationary distribution describes the typical population
profile conditioned on non-extinction. It also provides access to several
fundamental quantities, including the characteristic extinction time, the
rate of convergence toward the quasi-stationary regime, and the transient
behavior connecting the initial population to either extinction or a
long-lived metastable state.

Since Yaglom's pioneering work on subcritical Galton--Watson processes
\cite{Yaglom1947}, the existence, uniqueness, spectral representation, and
domains of attraction of quasi-stationary distributions have been extensively
studied for absorbing Markov chains and birth--death processes; see, among
others,
\cite{seneta1966quasi,darroch1967quasistationary,
cavender1978quasistationary,ferrari1991some,ferrari1995existence,
van1991quasi,coolen2006quasikilling,coolen2006quasi,
Gao2015quasistationary}.

For a continuous-time absorbed Markov process, the survival probability under
a quasi-stationary distribution is necessarily exponential. More precisely,
if $\nu$ is a quasi-stationary distribution, then there exists a constant
$\theta>0$ such that
\[
    \mathbb{P}_{\nu}(T_0>t)=e^{-\theta t},
    \quad
    \mathbb{E}_{\nu}[T_0]=\frac{1}{\theta};
\]
see, for example,
\cite{darroch1967quasistationary,van1991quasi}.
When $\nu$ is associated with the principal eigenvalue
$-\rho_1$ of the killed generator, one has $\theta=\rho_1$, and hence
\[
    \mathbb{P}_{\nu}(T_0>t)=e^{-\rho_1t},
    \quad
    \mathbb{E}_{\nu}[T_0]=\frac{1}{\rho_1}.
\]

In the self-adjoint discrete-spectrum setting considered in this paper, the
quasi-stationary distribution is determined by the positive eigenvector
associated with $-\rho_1$, while the higher eigenvalues govern the relaxation
of the conditioned process toward quasi-stationarity. In particular, if
\[
    -\rho_1>-\rho_2>-\rho_3>\cdots
\]
denote the eigenvalues of the killed generator, then the spectral gap $\rho_2-\rho_1$
determines the leading spectral rate of convergence after conditioning on
survival; see
\cite{van1991quasi,Chazottes2016}.
Thus, in the present framework, the study of quasi-stationary distributions is
naturally connected with the principal eigenvalue, the corresponding positive
eigenvector, the mean extinction time, and the spectral gap of the killed
generator.
Birth--death processes provide a natural microscopic model for addressing
these questions, since each transition represents the birth or death of a
single individual. The origins of these models can be traced back to Yule's pure-birth process
\cite{yule1925mathematical}, while the basic framework for continuous-time
birth--death processes was established by Feller
\cite{feller1939DieGrundlagen}. Kendall
\cite{kendall1948generalized} subsequently obtained explicit transition
probabilities for linear birth--death models, and the spectral representation
developed by Karlin and McGregor \cite{karlin1957classification} laid the
foundation for the analysis of state-dependent birth--death processes. In
particular, the spectral structure of the killed birth--death generator makes
it possible to study quasi-stationarity, extinction-time asymptotics, and
conditional convergence within a unified framework.

A particularly important mechanism in population dynamics is the
strong Allee effect, whereby the per-capita growth rate becomes
negative below a critical population density. Since the pioneering
work of Allee \cite{allee1938social}, this phenomenon has been studied
in a variety of single-species, spatial, discrete-time, and structured
population models
\cite{dennis1989allee,lewis1993allee,amarasekare1998allee,
jang2006allee,cushing2014backward,kramer2009evidence}.
At the deterministic level, a strong Allee effect produces a
bistable phase portrait in which the extinction state and a positive
equilibrium are both locally asymptotically stable and are separated
by an unstable threshold. In the corresponding stochastic system,
this bistability creates a competition between rapid absorption and
long-lived positive metastability. The purpose of this paper is to
quantify this competition through sharp spectral, extinction-time,
and quasi-stationary asymptotics.

\subsection{Related work and the discrete--continuous distinction}

A general theory of quasi-stationarity has been developed for broad classes
of absorbed Markov processes. Under suitable Lyapunov and minorization
conditions, one can establish the existence and uniqueness of a
quasi-stationary distribution, exponential convergence of the conditioned
process, and the existence of the associated $Q$-process
\cite{ChampagnatVillemonais2016,ChampagnatVillemonais2021,
ChampagnatVillemonais2023}. These general results provide a probabilistic
framework for quasi-stationarity, but do not usually yield sharp small-noise
asymptotics for the principal eigenvalue, the spectral gap, or the profile of
the quasi-stationary distribution.

More precise results have been obtained for one-dimensional absorbed
diffusions. Ji et al. \cite{Ji2021} studied the multiscale transient dynamics
of absorbed singular diffusions and derived small-noise asymptotics for the
principal eigenvalue and the gap between the first two eigenvalues.
Shen et al. \cite{ShenWangYi} investigated the concentration of
quasi-stationary distributions on invariant sets of the limiting deterministic
flow. More recently, Qi et al. \cite{Qi2024} established large-deviation
principles, refined concentration estimates, and asymptotic results for
extinction times, eigenvalues, and eigenfunctions. These results mainly apply
to one-dimensional small-noise diffusions under suitable regularity, boundary,
and stability assumptions. The bistable case in which both the absorbing
state and a positive equilibrium are stable, separated by an unstable
equilibrium, was considered in \cite{Yan2026quasistationary}.

Sharp quasi-stationary asymptotics have also been obtained for discrete
population models. For a class of density-dependent logistic birth--death
processes in which the origin is repelling at the deterministic level,
Chazottes et al. \cite{Chazottes2016} derived sharp estimates for the
principal eigenvalue and the associated eigenvector, quantified the rate of convergence
toward the quasi-stationary distribution, and obtained a Gaussian approximation centered at the positive stable equilibrium.

Although density-dependent birth--death processes admit diffusion
approximations, their quasi-stationary spectral problems are not equivalent.
Moreover,
diffusion approximations valid on finite time intervals do not automatically
preserve exponentially long extinction times, rare transitions across an
unstable threshold, or the low-lying spectrum of the killed generator. These
differences make a separate analysis of discrete birth--death processes
necessary.

\subsection{Model, assumptions, and notation}
\label{section of notations}
\label{subsec:notation}

Write $\mathbb{N}:=\{0,1,2,\ldots\}$,
$\mathbb{N}^*:=\{1,2,\ldots\}$, and $\mathbb{R}_+:=[0,\infty)$.
Let $(X_t^K)_{t\ge0}$ be a birth--death process on $\mathbb{N}$
with absorbing state $0$. For a population of size $n\in\mathbb{N}^*$,
the total birth and death rates are
\[
\lambda_n
=
n\widetilde{\lambda}\left(\frac nK\right),
\quad
\mu_n
=
n\widetilde{\mu}\left(\frac nK\right),
\]
respectively, where $K>1$ is the population-size scaling parameter.
We impose the following assumptions on the positive per-capita rate
functions $\widetilde{\lambda}$ and $\widetilde{\mu}$.

\begin{enumerate}
\item[\textbf{(A1)}]
The functions $\widetilde{\lambda}$ and $\widetilde{\mu}$ belong to
$C^2(\mathbb{R}_+)$, and
\[
0<\widetilde{\lambda}(0)<\widetilde{\mu}(0).
\]

\item[\textbf{(A2)}]
Both $\widetilde{\lambda}$ and $\widetilde{\mu}$ are increasing.
There exist exactly two points $0<x_1<x_2$ such that
\[
\widetilde{\lambda}(x_i)
=
\widetilde{\mu}(x_i),
\quad i=1,2.
\]
The two intersections are transversal and satisfy
\[
\widetilde{\lambda}'(x_1)
>
\widetilde{\mu}'(x_1),
\quad
\widetilde{\lambda}'(x_2)
<
\widetilde{\mu}'(x_2).
\]
Moreover,
\[
\begin{cases}
\widetilde{\lambda}(x)<\widetilde{\mu}(x),
    & x\in(0,x_1),\\[1mm]
\widetilde{\lambda}(x)>\widetilde{\mu}(x),
    & x\in(x_1,x_2),\\[1mm]
\widetilde{\lambda}(x)<\widetilde{\mu}(x),
    & x\in(x_2,\infty),
\end{cases}
\]
and the function
\[
x\longmapsto
\ln\frac{\widetilde{\mu}(x)}
          {\widetilde{\lambda}(x)}
\]
is strictly decreasing on $(0,x_1)$.
\item[\textbf{(A3)}]
The following conditions hold:
\[
\lim_{x\to\infty}
\frac{\widetilde{\lambda}(x)}
     {\widetilde{\mu}(x)}
=0,
\quad
\sup_{x\in\mathbb{R}_+}
\frac{\widetilde{\mu}'(x)}
     {\widetilde{\mu}(x)}
<\infty,
\]
and
\[
\int_{x_2}^{\infty}
\frac{dx}{x\widetilde{\mu}(x)}
<\infty.
\]
\end{enumerate}

Define $H:\mathbb{R}_+\to\mathbb{R}$ by
\begin{equation}\label{Hx}
H(x)
=
\int_{x_2}^{x}
\ln\frac{\widetilde{\mu}(s)}
          {\widetilde{\lambda}(s)}\,ds.
\end{equation}

We further impose the following regularity condition.

\begin{enumerate}
\item[\textbf{(H)}]
The function $H$ is three times differentiable and satisfies
\[
\sup_{x\in\mathbb{R}_+}
(1+x^2)|H'''(x)|<\infty.
\]
\end{enumerate}

We write
\[
h(x):=H'(x)=\ln\frac{\widetilde{\mu}(x)}{\widetilde{\lambda}(x)},
\quad F(x):=H(0)-H(x),
\quad V(x):=x\bigl(\widetilde{\lambda}(x)-\widetilde{\mu}(x)\bigr).
\]
Thus $H(x_2)=0$, $F(0)=0$, and $F(x_2)=H(0)$. The quantity
$c:=H(0)$ may have either sign or vanish.

For later use, set
\[
n_1(K):=\lfloor x_1K\rfloor,
\quad
n_2(K):=\lfloor x_2K\rfloor.
\]
The corresponding rounding errors are
$\epsilon_i(K):=Kx_i-n_i(K)\in[0,1)$, $i=1,2$.
We also use the tail cutoff
\[
    x_{\frac12}
    :=
    \max\left\{
        x>0:
        \frac{\widetilde{\lambda}(x)}
             {\widetilde{\mu}(x)}
        =
        \frac12
    \right\},
    \quad
    n_{\frac12}(K)
    :=
    \left\lfloor x_{\frac12}K\right\rfloor.
\]
Under the assumptions above,
\[
    0<x_1<x_2<x_{\frac12}<\infty.
\]

Fix $K_0>1$ sufficiently large such that
$1\le n_1(K)<n_2(K)<n_{\frac12}(K)$ for every $K\ge K_0$.
All asymptotic assertions below concern $K\to\infty$ with $K\ge K_0$;
increasing $K_0$ when necessary does not alter the assumptions.
Unless stated otherwise, dependence on $K$ is suppressed in the notation.

Define the reversible weights by
\[
\pi_1:=\frac1{\mu_1},
\quad
\pi_n
:=
\frac{\lambda_1\cdots\lambda_{n-1}}
     {\mu_1\cdots\mu_n},
\quad n\geq2.
\]
The reciprocal conductances are
\[
w_n:=\frac1{\lambda_n\pi_n},\quad n\ge1.
\]
Define $u^0=(u_n^0)_{n\in\mathbb{N}^*}$ by
\begin{equation}\label{u_n^0}
u_n^0
=
u_1^0
\left(
1+\sum_{j=1}^{n-1}\frac1{\lambda_j\pi_j}
\right),
\quad n\geq1,
\end{equation}
where
\[
u_0^0=0,
\quad
u_{n_1(K)}^0=1,
\quad
u_1^0
=
\left(
1+\sum_{j=1}^{n_1(K)-1}
\frac1{\lambda_j\pi_j}
\right)^{-1}.
\]

In Section~\ref{sec:preliminaries}, we construct a self-adjoint
realization $L$ of the killed birth--death generator in the weighted space
$\ell^2(\pi)$, whose notation is specified in the table below.
As shown in Lemma~\ref{operatorL},
the spectrum of $L$ is discrete
and can be written as
\[
-\rho_1>-\rho_2>-\rho_3>\cdots\to-\infty.
\]
Here $\rho_1>0$, the principal eigenvalue $-\rho_1$ is simple
and admits a strictly positive eigenvector.

For $j\in\mathbb{N}$, let $T_j:=\inf\{t\ge0:X_t^K=j\}$,
with $\inf\emptyset:=\infty$. $\mathbb{P}_n$ and
$\mathbb{E}_n$ denote probability and expectation, respectively,
for the process started from $X_0^K=n$.
For $A\subseteq\mathbb{N}^*$, the killed transition kernel is
\[
P_t(n,A):=\mathbb{P}_n(X_t^K\in A,\ T_0>t).
\]
In particular, $P_t(n,\mathbb{N}^*)=\mathbb{P}_n(T_0>t)$, and
$P_t(n,\cdot)/P_t(n,\mathbb{N}^*)$ is the law conditioned on
survival. We also write $P_tf(n)=\sum\limits_{j\ge1}P_t(n,\{j\})f(j)$.

The remaining notation used throughout the paper is collected below.
The constructions and estimates for the spectral and probabilistic
quantities are given in the indicated results.
\begingroup
\small
\renewcommand{\arraystretch}{1.2}
\setlength{\LTpre}{0.5\baselineskip}
\setlength{\LTpost}{0.5\baselineskip}
\begin{longtable}{@{}p{0.24\textwidth}@{\hspace{0.025\textwidth}}p{0.735\textwidth}@{}}
\hline
Symbol & Definition or meaning \\ \hline
\endfirsthead
\hline
Symbol & Definition or meaning (continued) \\ \hline
\endhead
\hline
\endfoot
\multicolumn{2}{@{}l}{\textit{Sequence spaces and operators}} \\[2pt]
$\ell^2(\pi)$
& Weighted Hilbert space of sequences $u=(u_n)_{n\ge1}$ such that
  $\sum\limits_{n\ge1}\pi_n|u_n|^2<\infty$. \\[3pt]
$\langle u,v\rangle_\pi$, $\|u\|_\pi$
& $\langle u,v\rangle_\pi=\sum\limits_{n\ge1}\pi_n\overline{u_n}v_n$ and
  $\|u\|_\pi=\langle u,u\rangle_\pi^{1/2}$. \\[3pt]
$\|u\|_\infty$
& Supremum of $|u_n|$ over the index set under consideration. \\[3pt]
$\mathfrak{D}$, $\mathcal{D}(L)$
& Space of finitely supported sequences on $\mathbb{N}^*$ and domain
  of the closed generator $L$, respectively; the latter is characterized
  in \eqref{eq:generator-domain}. \\[3pt]
$\mathbbm{1}$, $\mathbbm{1}_A$
& Constant sequence equal to one and indicator of $A$, respectively. \\[4pt]
\hline
\multicolumn{2}{@{}l}{\textit{Spectral quantities}} \\[2pt]
$-\rho_j(K)$, $j\ge1$
& Eigenvalues of $L$, ordered by $0<\rho_1(K)<\rho_2(K)<\cdots$;
  see Lemma~\ref{operatorL}. \\[3pt]
$u^0=(u_n^0)$
& Zero-energy sequence defined in \eqref{u_n^0}, with
  $u_0^0=0$ and $u_{n_1(K)}^0=1$. \\[3pt]
$\Phi=(\Phi_n)$
& Truncated profile $\Phi_n=u_{\min\{n,n_2(K)\}}^0$;
  see \eqref{Phi=}. \\[3pt]
$\phi=(\phi_n)$
& Strictly positive principal eigenvector satisfying
  $L\phi=-\rho_1\phi$, normalized by $\phi_{n_1(K)}=1$;
  see \eqref{phi n}. \\[4pt]
\hline
\multicolumn{2}{@{}l}{\textit{Probability measures and spectral coefficients}} \\[2pt]
$\nu=\nu^K=(\nu_n)$
& Quasi-stationary distribution in Theorem~\ref{UniqueQSD}, with
  $\nu_n=\pi_n\phi_n/\langle\phi,\mathbbm{1}\rangle_\pi$. \\[3pt]
$\delta_0$
& Dirac probability measure at the absorbing state $0$. \\[3pt]
$\alpha_n(K)$
& $\alpha_n(K)=\Phi_n/u_{n_2(K)}^0$. For $1\le n\le n_2(K)$,
  this equals $\mathbb{P}_n(T_{n_2(K)}<T_0)$; for $n>n_2(K)$,
  it equals one. See \eqref{eq:alpha-hitting}. \\[3pt]
$S_K$
& Total reversible weight $S_K=\sum\limits_{j\ge1}\pi_j$. \\[3pt]
$B_K$
& Uniform spectral amplitude
  $B_K=\bigl(S_K/\min\limits_{1\le j\le n_{\frac12}(K)}\pi_j\bigr)^{1/2}$. \\[3pt]
$d_n(K)$
& Principal-projection coefficient
  $d_n(K)=\phi_n\langle\phi,\mathbbm{1}\rangle_\pi/\|\phi\|_\pi^2$.
  The quantities $S_K$, $B_K$, and $d_n(K)$ are used in
  Section~\ref{M-result}. \\
\end{longtable}
\endgroup

For probability measures $\mu$ and $\nu$ on $\mathbb{N}$, our total
variation convention is
\[
\|\mu-\nu\|_{\mathrm{TV}}
:=\frac12\sum_{n\ge0}|\mu_n-\nu_n|
=\sup_{A\subseteq\mathbb{N}}|\mu(A)-\nu(A)|.
\]
The same convention applies to a finite signed measure $\zeta$ of
total mass zero: $\|\zeta\|_{\mathrm{TV}}:=\frac12\sum\limits_{n\ge0}|\zeta_n|$.
For measures on $\mathbb{N}^*$, the sum and supremum are restricted
to $\mathbb{N}^*$. When used for probability laws on the metastable
time scale, $\mu_K\approx\nu_K$ means that
$\|\mu_K-\nu_K\|_{\mathrm{TV}}\to0$ in the stated limit.

The standard normal distribution function is
\[
\mathcal{N}(z):=\frac1{\sqrt{2\pi}}\int_{-\infty}^{z}e^{-\frac{s^2}{2}}\,ds.
\]

Operator norms are induced by the stated domain and range norms;
$\|A\|_{X\to Y}$ denotes the norm of $A:X\to Y$, and $I$ denotes
the identity operator on the space in use. Auxiliary integer cutoffs
such as $k$, $m$, and $N$, and error terms denoted by $\varepsilon$,
are defined locally where they occur.

The logarithm $\ln$ is natural. Empty sums equal zero and empty
products equal one. Generic positive constants, written $C$, $C'$,
or with a local subscript, may change from line to line. Unless
specified otherwise, they may depend on the fixed rate functions
and fixed auxiliary parameters, but not on $K$ or on indices and
times over which an estimate is stated to be uniform.
Asymptotic notation refers to $K\to\infty$ unless another limit is
specified. For $b_K>0$, we use
\[
\begin{aligned}
a_K=O(b_K)
&\quad\Longleftrightarrow\quad |a_K|\le Cb_K
   \text{ for all sufficiently large }K,\\
a_K=o(b_K)
&\quad\Longleftrightarrow\quad a_K/b_K\to0\text{ as }K\to\infty.
\end{aligned}
\]
An $O(b_K)$ term may have either sign; one-sided bounds
are written with explicit positive constants. For families indexed
by $n\in I_K$, uniform $O(b_K)$ means
$\sup\limits_{n\in I_K}|a_{K,n}|\le Cb_K$, and uniform $o(b_K)$ means
$\sup\limits_{n\in I_K}|a_{K,n}|/b_K\to0$. For positive quantities,
$a_K\asymp b_K$ means $C^{-1}b_K\le a_K\le Cb_K$, whereas
$a_K\sim b_K$ means $a_K/b_K\to1$ in the limit under consideration.

\subsection{Main results}\label{M-result}

We first establish sharp asymptotics for the principal eigenvalue $-\rho_1$ and the corresponding eigenvector $\phi=(\phi_n)_{n\in\mathbb{N}^*}$. 

\begin{mainthm}\label{theorem of principal eigenvalue and eigenfunction}
	For all sufficiently large $K$, one has
	\begin{equation}\label{rho1}
        \rho_1(K)=\frac{x_2\widetilde{\lambda}(x_2)\sqrt{-H''(x_1)H''(x_2)}}{2\pi} \exp\left(-KH(x_1)\right)\left(1+O\left(\frac{(\ln K)^3}{\sqrt{K}}\right)\right).
	\end{equation}
	Moreover, for all sufficiently large $K$ and $n\in\mathbb{N}^*$, 
	\[
	|\phi_n(K)-\Phi_n(K)|\le O(1)\rho_1(K)K\Phi_n(K),
	\]
	where  $\phi=(\phi_n)_{n\in\mathbb{N}^*}$ is the eigenvector corresponding to the principal eigenvalue $-\rho_1$, and $\Phi=(\Phi_n)$ is given by
	\begin{equation}\label{Phi=}
		\Phi_n(K)=
		\begin{cases}
			u_n^0, & \mbox{if } 1\le n\le n_2(K),\\
			u_{n_2(K)}^0, & \mbox{if } n\ge n_2(K).
		\end{cases}
	\end{equation}
	In particular, for all sufficiently large $K$, one has
	\[
	\sup_{n\in\mathbb{N}^*}|\phi_n(K)-\Phi_n(K)|\le O(1)\rho_1(K)K.
	\]
\end{mainthm}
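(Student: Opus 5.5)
The plan is a variational (Rayleigh quotient) upper bound for $\rho_1$, a matching lower bound from a Green-function / resolvent identity, and a comparison of the eigenvector with the zero-energy profile $\Phi$ through the recurrence. Throughout, the killed generator acts as
\[
(Lu)_n=\lambda_n(u_{n+1}-u_n)+\mu_n(u_{n-1}-u_n),\qquad u_0=0,
\]
which is self-adjoint in $\ell^2(\pi)$ (Lemma~\ref{operatorL}). Its Dirichlet form is
\[
\mathcal{E}(u)=\sum_{n\ge0}\lambda_n\pi_n(u_{n+1}-u_n)^2,
\]
with the convention that the $n=0$ term is $\mu_1\pi_1u_1^2=u_1^2$.

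\textbf{Step 1 (upper bound).} Insert $\Phi$ into $\rho_1\le\mathcal{E}(\Phi)/\|\Phi\|_\pi^2$.
\begin{itemize}
\item Since $\Phi$ is harmonic up to $n_2$ and constant afterwards, $\mathcal{E}(\Phi)=u_1^0\,u^0_{n_2}$. Indeed, $\lambda_j\pi_j(u^0_{j+1}-u^0_j)=u_1^0$ for every $j$, and summing these increments up to $n_2$ gives the identity.
\item Write $\pi_n\approx\frac{1}{n\widetilde\mu(n/K)}\exp(-KH(n/K)+O(1))$. The precise prefactor comes from a discrete Euler--Maclaurin expansion of $\sum\ln(\lambda_j/\mu_j)$, using (H) to control the error.
\item Then $w_j=1/(\lambda_j\pi_j)\approx e^{KH(j/K)}$ peaks at $x_1$, because $H$ has a local maximum there. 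A discrete Laplace estimate gives
\[
(u_1^0)^{-1}\approx u^0_{n_2}/u_1^0=\sum_j w_j\approx \sqrt{2\pi K/(-H''(x_1))}\;e^{KH(x_1)}\,(\ldots).
\]
\item Likewise $\|\Phi\|_\pi^2\approx(u^0_{n_2})^2\sum_n\pi_n$, and $\sum_n\pi_n$ concentrates at $x_2$ with Gaussian width $\sqrt{K/H''(x_2)}$. The mass of $\pi$ near the origin, of order $e^{Kc}$, is negligible after multiplying by $\Phi_n^2$, which is exponentially small there since $H(x_1)>\max(0,c)$.
\item Combining these, the ratio yields exactly the prefactor $x_2\widetilde\lambda(x_2)\sqrt{-H''(x_1)H''(x_2)}/(2\pi)$. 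The factor $x_2\widetilde\lambda(x_2)$ comes from $\lambda_n/K$ at $n_2$, since $\lambda_j\pi_j$ and $\pi_j$ differ by $\lambda_j$. The $(\ln K)^3/\sqrt K$ error comes from truncating the Laplace sums to windows of width $\sqrt{K}\ln K$, where the cubic Taylor remainder is $K(\ln K/\sqrt K)^3$.
\end{itemize}

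\textbf{Step 2 (lower bound and eigenvector).} Use that $\phi$ solves the inhomogeneous recurrence $L\phi=-\rho_1\phi$ with $\phi_0=0$. By variation of constants (discrete Green's formula),
\[
\phi_n=\phi_1u_n^0/u_1^0-\rho_1\sum_{j<n}w_j\sum_{k\le j}\pi_k\phi_k ,
\]
together with the exact identity
\[
\rho_1\sum_k\pi_k\phi_k=\phi_1\mu_1\pi_1=\phi_1,
\]
obtained by summing the equation, i.e.\ flux into $0$. Monotonicity of $\phi$ (a positive principal eigenvector of a birth--death chain is increasing) then gives a priori bounds. The key estimate is
\[
\sum_{k\le j}\pi_k\phi_k\lesssim\Phi_j\sum_{k\le j}\pi_k
\]
for $j\le n_2$, after which
\[
\Bigl|\phi_n-\tfrac{\phi_1}{u_1^0}u^0_n\Bigr|\le \rho_1\sum_{j<n}w_j\sum_{k\le j}\pi_k\Phi_k .
\]
The double sum is bounded by $O(K)\Phi_n$ uniformly: for $j$ near $x_1K$ the inner sum is $O(1)$ relative to $\Phi_j/u_1^0$, and the Laplace widths contribute $\sqrt K\cdot\sqrt K=K$. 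This yields the bound $|\phi_n-\Phi_n|\le O(\rho_1K)\Phi_n$ after normalizing $\phi_{n_1}=1$. For $n>n_2$ the increments of $\phi$ are $\rho_1 w_j\sum_{k\le j}\pi_k\phi_k$. Here $w_j$ is exponentially small beyond $x_2$, and (A3) handles the tail, so $\phi$ stays within $O(\rho_1K)$ of $u^0_{n_2}$. Finally, inserting $\phi\approx\Phi$ into the flux identity gives
\[
\rho_1=\phi_1/\langle\phi,\mathbbm 1\rangle_\pi=\frac{u_1^0}{\langle\Phi,\mathbbm 1\rangle_\pi}(1+O(\rho_1K)),
\]
and the same Laplace computation as in Step~1 gives the matching two-sided asymptotic. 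Since $\rho_1K$ is exponentially small, its error is absorbed.

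\textbf{Main obstacle.} The hardest step is making the a priori comparison in Step 2 rigorous without circularity, since it needs $\phi\le C\Phi$ before $\phi\approx\Phi$ is known. I would first try a bootstrap: the upper bound on $\rho_1$ from Step 1 makes $\rho_1 K e^{\delta K}\to0$, so the Green-function operator $G:f\mapsto\rho_1\sum w\sum\pi f$ has norm $O(\rho_1K)<1/2$ in the weighted sup norm $\|f/\Phi\|_\infty$. Then $\phi=\Phi\cdot(\text{const})+G\phi$ is solved by a Neumann series, which gives both the bound and the comparison at once. The remaining technical work is the uniform discrete Laplace estimates with the cubic error from (H).
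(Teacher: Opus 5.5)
Your route differs from the paper's. You use a Rayleigh quotient with the test vector $\Phi$, the exact flux identity $\rho_1\langle\phi,\mathbbm{1}\rangle_\pi=\phi_1$, and a Volterra comparison of $\phi$ with $u^0$. The paper instead builds a left solution via $\mathcal{T}$ on $\{1,\dots,n_2\}$ and a bounded right solution via $B$ on $\{n_2-1,n_2,\dots\}$. It then takes $\rho_1$ as the smallest positive zero of the matching function $f(\rho)$ and identifies the glued positive vector with the principal eigenvector by orthogonality.

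Step~1 is correct: $\mathcal{E}(\Phi)=u_1^0u_{n_2}^0$, and the Laplace sums give the right constant. Your comparison on $\{1,\dots,n_2\}$ is also sound. There the Green representation is lower triangular, and $\frac{1}{u_n^0}\sum_{j<n}w_j\sum_{k\le j}\pi_ku_k^0=O(K)$ uniformly by the same estimates the paper uses for $\mathcal{T}$. The smallness of $K\rho_1$ comes from Step~1, so nothing there is circular.

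The gap is the tail $n>n_2$. This is exactly where the a priori bound on $\phi$ is needed, and it carries a fixed fraction of the $\pi$-mass. Both the uniform eigenvector estimate and your lower bound on $\rho_1$ through $\langle\phi,\mathbbm{1}\rangle_\pi$ therefore depend on it. Your proposed resolution fails there for three reasons.
\begin{itemize}
\item $G$ is \emph{not} bounded in $\|f/\Phi\|_\infty$ on $\mathbb{N}^*$. For $n>n_2$,
\[
(G\Phi)_n\ \ge\ \rho_1\Bigl(\sum_{k\le n_2}\pi_k\Phi_k\Bigr)\sum_{j=n_2}^{n-1}w_j\ \longrightarrow\ \infty,
\]
because $\sum_j w_j=\infty$ by \eqref{TwoSums}.
\item Your representation is really $\phi=\frac{\phi_1}{u_1^0}u^0-G\phi$, with $u^0$ rather than $\Phi$. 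Beyond $n_2$ both terms diverge and cancel, so it cannot be inverted by a Neumann series there.
\item The claim that $w_j$ is exponentially small beyond $x_2$ is false: $w_j$ increases there. Also, your tail increment $\rho_1 w_j\sum_{k\le j}\pi_k\phi_k$ omits the term $\phi_1 w_j$.
\end{itemize}

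The missing step is to use the flux identity to convert the left representation into a right one. Set $W_n:=\lambda_n\pi_n(\phi_{n+1}-\phi_n)=\rho_1\sum_{k>n}\pi_k\phi_k$. The flux at infinity vanishes because $\phi\in\ell^2(\pi)$ and $u^0\notin\ell^2(\pi)$ (Lemma~\ref{u 0 notin l2}). Then $W_n$ is positive and decreasing, so $\phi_k\le\phi_n+W_n\sum_{j=n}^{k-1}w_j$ for $k>n$. Substituting this into $W_n$ gives, for $n\ge n_2$,
\[
W_n\le\rho_1\phi_n\sum_{k>n}\pi_k+\rho_1\beta\,W_n,\qquad
\beta:=\sum_{j\ge n_2}w_j\sum_{k>j}\pi_k\le CK,
\]
where the bound on $\beta$ is Lemma~\ref{sum le M sum le CK}. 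Hence $\phi_{n_2}\le\phi_n\le\phi_{n_2}\exp\bigl(\rho_1\beta/(1-\rho_1\beta)\bigr)=\phi_{n_2}\bigl(1+O(K\rho_1)\bigr)$ for all $n\ge n_2$. This yields boundedness and the tail estimate at once, and plays the role of the paper's Lemma~\ref{n>n2K}.

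With this patch your argument closes. The flux identity then gives the two-sided asymptotic for $\rho_1$ directly from the Laplace sums, avoiding the matching function and its Taylor and intermediate-value analysis. The paper, for its part, never needs a priori regularity of the abstract eigenvector, because it constructs a bounded positive solution and proves it lies in $\mathcal{D}(L)$.
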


Recall that $T_0:=\inf\{t\ge0:X^K_t=0\}$ is the absorption time. In what follows, we characterize the unique quasi-stationary distribution
$\nu^K=(\nu_n^K)_{n\in\mathbb{N}^*}$ of the process $(X_t^K)_{t\geq0}$. For simplicity, we write $\nu$ in place of $\nu^K$ whenever the dependence on $K$ is clear from the context, except in Theorem \ref{Gaussian approximation}, where the dependence is made explicit.

\begin{mainthm}\label{UniqueQSD}
	For all sufficiently large $K$, the birth--death process $(X_t^K)_{t\geq0}$ admits a unique quasi-stationary distribution $\nu=(\nu_n)_{n\in\mathbb{N}^*}$, which is given by
	\[
	\nu_n=\frac{\pi_n\phi_n}{\langle\phi,\mathbbm{1}\rangle_{\pi}}\text{ for each }n\in\mathbb{N}^*.
	\]
\end{mainthm}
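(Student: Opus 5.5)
We split the proof into existence (checking that $\nu$ is a QSD) and uniqueness. The classical tool for uniqueness is the entrance-from-infinity criterion for birth--death processes: a QSD is unique precisely when $\infty$ is an entrance boundary, i.e.
\[
\sum_{n\ge1}\frac{1}{\lambda_n\pi_n}\sum_{j>n}\pi_j<\infty .
\]
\textbf{Step 1 (spectral setup).} By Lemma~\ref{operatorL}, $L$ is self-adjoint on $\ell^2(\pi)$ with discrete spectrum, and $-\rho_1$ is simple with strictly positive eigenvector $\phi$. First we show $\phi\in\ell^1(\pi)$, so that $\langle\phi,\mathbbm{1}\rangle_\pi<\infty$ and $\nu$ is a genuine probability measure. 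The bound $|\phi_n-\Phi_n|\le O(1)\rho_1K$ from Theorem~\ref{theorem of principal eigenvalue and eigenfunction} gives $\sup_n\phi_n<\infty$. Moreover, $S_K=\sum_j\pi_j<\infty$ by (A3), since $\lambda_n/\mu_n\to0$ and the $\pi_n$ eventually decay geometrically (ratio below $1/2$ beyond $n_{\frac12}$).

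\textbf{Step 2 (existence).} The self-adjoint semigroup satisfies $e^{tL}\phi=e^{-\rho_1t}\phi$. Using symmetry $\pi_nP_t(n,\{j\})=\pi_jP_t(j,\{n\})$ (reversibility of the minimal killed process, which we identify with $e^{tL}$ on $\ell^2(\pi)$ since the process is non-explosive by the integrability condition in (A3)), we get
\[
\sum_n\nu_nP_t(n,\{j\})=\frac{\pi_j(P_t\phi)(j)}{\langle\phi,\mathbbm 1\rangle_\pi}=e^{-\rho_1t}\nu_j .
\]
Summing over $j$ gives $\mathbb P_\nu(T_0>t)=e^{-\rho_1t}>0$, and the ratio then equals $\nu_j$. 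This is exactly the QSD property. The one technical point is justifying $P_t\phi=e^{tL}\phi$ for the pointwise kernel. We would handle it by truncation: use the killed chains on $\{1,\dots,N\}$, note the monotone convergence of their kernels as $N\to\infty$, and use the boundedness of $\phi$.

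\textbf{Step 3 (uniqueness, the main obstacle).} We invoke van Doorn's theorem for birth--death processes \cite{van1991quasi}: if absorption is certain and $\infty$ is an entrance boundary, then there is exactly one QSD, and it is $\pi_n\phi_n/\langle\phi,\mathbbm1\rangle_\pi$ with $\phi$ the eigenvector at the decay parameter. So it remains to verify the series condition above. For $n\ge n_{\frac12}$ we have $\pi_{j+1}/\pi_j=\lambda_j/\mu_{j+1}\le \lambda_j/\mu_j\le 1/2$, using that $\widetilde\mu$ is increasing. Hence
\[
\sum_{j>n}\pi_j\le C\pi_{n+1}, \qquad \frac{1}{\lambda_n\pi_n}\sum_{j>n}\pi_j\le \frac{C}{\mu_{n+1}}\le\frac{C}{(n+1)\widetilde\mu((n+1)/K)} .
\]
The sum of the right-hand side is finite by comparison with $\int_{x_2}^\infty dx/(x\widetilde\mu(x))<\infty$, using monotonicity of $\widetilde\mu$; the finitely many remaining terms are harmless. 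Certain absorption follows from $\sum_n w_n=\infty$: indeed $1/(\lambda_n\pi_n)$ grows for large $n$ since $\mu_{n}/\lambda_n\ge2$.

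We expect the delicate point to be matching van Doorn's framework, in particular identifying his decay parameter with our $\rho_1$, i.e. the bottom of the spectrum of $-L$. If this matching fails, the fallback is a direct argument. Any QSD $\mu$ satisfies $\mu L=-\theta\mu$ componentwise. The entrance condition forces $\mu_n/\pi_n$ to be in $\ell^2(\pi)$ and hence an eigenvector of $L$. A positive eigenvector must then correspond to $-\rho_1$, by orthogonality to the positive $\phi$.
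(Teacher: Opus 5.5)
Your proposal is correct and follows the paper's proof. Existence comes from the boundedness and strict positivity of $\phi$ (Theorem~\ref{theorem of principal eigenvalue and eigenfunction}), the identification of the killed semigroup with the self-adjoint $e^{tL}$ (Lemma~\ref{operatorL}(ii)), and $e^{tL}\phi=e^{-\rho_1t}\phi$; uniqueness comes from the entrance condition \eqref{QSDunique} combined with \cite[Theorem~3.2(ii)]{van1991quasi}. The decay-parameter matching you flag as delicate is not needed: Step~2 already shows that $\nu$ is a quasi-stationary distribution, so the uniqueness part of van Doorn's theorem alone identifies it, which is exactly how the paper concludes.
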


\begin{maincor}
For all sufficiently large $K$, the mean extinction time of $(X_t^K)_{t\geq0}$ satisfies
\[
\mathbb{E}_{\nu}[T_0]=\frac{2\pi}{x_2\widetilde{\lambda}(x_2)\sqrt{-H''(x_1)H''(x_2)}} \exp\left(KH(x_1)\right)\left(1+O\left(\frac{(\ln K)^3}{\sqrt{K}}\right)\right).
\]
\end{maincor}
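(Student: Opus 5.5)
The corollary follows by combining Theorem~\ref{UniqueQSD} with the sharp eigenvalue asymptotics \eqref{rho1} of Theorem~\ref{theorem of principal eigenvalue and eigenfunction}. The plan has three steps.

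\emph{Step 1: survival under $\nu$ is exactly exponential with rate $\rho_1$.} By Theorem~\ref{UniqueQSD}, $\nu_n=\pi_n\phi_n/\langle\phi,\mathbbm{1}\rangle_\pi$, where $L\phi=-\rho_1\phi$ and $\phi>0$. Since $L$ is self-adjoint in $\ell^2(\pi)$, the killed semigroup satisfies $P_t=e^{tL}$ on $\ell^2(\pi)$, and $P_t$ is $\pi$-symmetric. Hence
\[
\mathbb{P}_\nu(T_0>t)
=\sum_n\nu_n P_t\mathbbm{1}(n)
=\frac{\langle\phi,P_t\mathbbm{1}\rangle_\pi}{\langle\phi,\mathbbm{1}\rangle_\pi}
=\frac{\langle P_t\phi,\mathbbm{1}\rangle_\pi}{\langle\phi,\mathbbm{1}\rangle_\pi}
=e^{-\rho_1t}.
\]
Two technical points must be checked here.
\begin{itemize}
\item $\mathbbm{1}\in\ell^2(\pi)$, i.e.\ $S_K<\infty$. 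This follows from (A3), since $\lambda_n/\mu_{n+1}\to0$ makes $\pi_n$ decay superexponentially in the tail.
\item The $\ell^2$ semigroup $e^{tL}$ agrees with the probabilistic killed kernel $P_t$. This identification is implicit in Lemma~\ref{operatorL} and in the construction behind Theorem~\ref{UniqueQSD}. Alternatively, one can invoke the general fact quoted in the introduction that for a QSD one has $\mathbb{P}_\nu(T_0>t)=e^{-\theta t}$, and then identify $\theta=\rho_1$ via the eigenvector relation.
\end{itemize}

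\emph{Step 2: compute the mean.} Integrating in $t$ gives
\[
\mathbb{E}_\nu[T_0]=\int_0^\infty e^{-\rho_1t}\,dt=\frac1{\rho_1}.
\]

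\emph{Step 3: insert the asymptotics.} Substitute \eqref{rho1} and use $(1+O(\varepsilon_K))^{-1}=1+O(\varepsilon_K)$ with $\varepsilon_K=(\ln K)^3/\sqrt K\to0$. This yields the stated formula, with prefactor $2\pi/\bigl(x_2\widetilde{\lambda}(x_2)\sqrt{-H''(x_1)H''(x_2)}\bigr)$ and exponential factor $e^{KH(x_1)}$.

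The only non-routine point is Step 1, namely justifying the identification of $e^{tL}$ with the minimal killed process. Given Theorem~\ref{UniqueQSD}, this should already be available, so I expect the main difficulty to be purely bookkeeping.
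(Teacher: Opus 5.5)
Your proposal is correct and follows the paper's own route. The paper derives $\mathbb{P}_\nu(T_0>t)=e^{-\rho_1t}$ in the proof of Theorem~\ref{UniqueQSD} exactly as in your Step~1, using self-adjointness and $e^{tL}\phi=e^{-\rho_1t}\phi$; the corollary then follows from $\mathbb{E}_\nu[T_0]=1/\rho_1$ and inversion of \eqref{rho1}. The point you flag as delicate is not merely implicit: the identification of $e^{tL}$ with the killed kernel is stated and proved in Lemma~\ref{operatorL}(ii), equation \eqref{eq:killed-semigroup-identification}, and $\mathbbm{1}\in\ell^2(\pi)$ is \eqref{TwoSums}.
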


Recall $c=H(0)$, the tail cutoff $n_{\frac12}(K)$ and the fixed lower bound
$K_0$ from Section~\ref{section of notations}. For the uniform estimates, set
\[
M_K:=\sum_{j=n_{\frac12}(K)}^\infty
\frac{1}{\lambda_j\pi_j}\sum_{p=j+1}^\infty\pi_p,
\quad c_0:=\left(\sup_{K\ge K_0}M_K\right)^{-1}>0.
\]
The positivity of $c_0$ is proved below in
Proposition~\ref{proposition7.3}. We also use
\[
S_K:=\sum_{j=1}^\infty\pi_j,\quad
B_K:=\left(\frac{S_K}{\min\limits_{1\le j\le n_{\frac12}(K)}\pi_j}\right)^{\frac12},
\quad
d_n(K):=\phi_n\frac{\langle\phi,\mathbbm{1}\rangle_\pi}{\|\phi\|_\pi^2}.
\]
These quantities are finite, $d_n(K)>0$, and $B_K\ge1$.

We use the total variation convention of Section~\ref{subsec:notation}. Denote by \(P_t\) the semigroup generated by the process $(X_t^K)_{t\geq0}$. The following theorem quantifies the rate of convergence towards the
quasi-stationary distribution and describes the metastable mixture.
\begin{mainthm}\label{theorem of TV}
Let $u^0$ be defined by \eqref{u_n^0}, and set
\[
\alpha_n(K):=\frac{\Phi_n}{u_{n_2(K)}^0}
=\begin{cases}
\frac{u_n^0}{u_{n_2(K)}^0},&n\le n_2(K),\\
1,&n\ge n_2(K).
\end{cases}
\]
There exists a constant $C>0$, independent of $K,n,t$, such that
\begin{equation}\label{equation TV}
\begin{split}
&\left\|\mathbb{P}_n(X_t^K\in\cdot)
-\left[\alpha_n(K)\nu+(1-\alpha_n(K))\delta_0\right]\right\|_{\mathrm{TV}}\\
&\quad\le C\left[K\rho_1e^{-\rho_1t}+1-e^{-\rho_1t}
+e^{-\frac{c_0}{4}t}+R(t,K,\rho_2)\right],
\end{split}
\end{equation}
where
\[
R(t,K,\rho_2):=B_K e^{-\rho_2\frac{t}{2}}.
\]
This holds for every sufficiently large $K$, every $n\ge1$, and every $t\ge0$.
Moreover,
\begin{equation}\label{ptnAptnN}
\left\|\frac{P_t(n,\cdot)}{P_t(n,\mathbb{N}^*)}-\nu\right\|_{\mathrm{TV}}
\le\min\left\{1,\frac{O(1)}{d_n(K)}
\left[e^{-\frac{c_0}{8}t}
+B_K e^{-\frac{\rho_2-\rho_1}{2}t}\right]\right\}.
\end{equation}
Both rates on the right-hand side are positive, since $c_0>0$ and
$\rho_2>\rho_1$.
The amplitudes satisfy $B_K\le O(1) e^{C' K}$ for some \(C'>0\) and
$d_n(K)=\alpha_n(K)(1+O(K\rho_1(K)))$, uniformly in $n$.
\end{mainthm}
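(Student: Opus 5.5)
The argument rests on the spectral decomposition of the killed semigroup $P_t=e^{tL}$ in $\ell^2(\pi)$, combined with a separate treatment of the tail $n\ge n_{\frac12}(K)$, where $\ell^2(\pi)$ estimates are poor. By Lemma~\ref{operatorL}, for $n\in\mathbb{N}^*$ and a bounded test function $f$ on $\mathbb{N}^*$ we write
\[
P_tf(n)=e^{-\rho_1t}\phi_n\frac{\langle\phi,f\rangle_\pi}{\|\phi\|_\pi^2}+\bigl(P_t(I-\Pi_1)f\bigr)(n),
\]
where $\Pi_1$ denotes the orthogonal projection onto $\phi$. Taking $f=\mathbbm{1}_A$ gives
\[
\langle\phi,\mathbbm{1}_A\rangle_\pi=\nu(A)\,\langle\phi,\mathbbm{1}\rangle_\pi
\]
by Theorem~\ref{UniqueQSD}, so the principal term is exactly $e^{-\rho_1t}d_n(K)\nu(A)$.

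\textbf{Step 1 (the coefficient $d_n$).} We prove $d_n(K)=\alpha_n(K)(1+O(K\rho_1))$. Using Theorem~\ref{theorem of principal eigenvalue and eigenfunction}, replace $\phi$ by $\Phi$ up to a relative error $O(K\rho_1)$. Then
\[
\frac{\langle\Phi,\mathbbm{1}\rangle_\pi}{\|\Phi\|_\pi^2}=\frac{1}{u^0_{n_2}}\,(1+\text{error}),
\]
because the weights $\pi_j$ concentrate near $n_2$, where $\Phi\equiv u^0_{n_2}$. The mass of $\pi$ on $[1,n_1]$ is exponentially smaller than the mass near $n_2$, relative to $e^{-KH(x_1)}$; this follows from the discrete Laplace estimates. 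This yields $d_n=\alpha_n(1+O(K\rho_1))$ uniformly in $n$.

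\textbf{Step 2 (the remainder).} For the remainder $P_t(I-\Pi_1)f$ we do not have pointwise $\ell^2$ control directly. We split $t$ into halves and use
\[
|g(n)|\le\pi_n^{-1/2}\|g\|_\pi
\]
for $n\le n_{\frac12}$, together with the bound $\|P_{t/2}(I-\Pi_1)\|_{\pi\to\pi}\le e^{-\rho_2t/2}$. Since $\|f\|_\pi\le S_K^{1/2}$, this gives the factor $B_Ke^{-\rho_2t/2}$.

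\textbf{Step 3 (the tail).} For $n>n_{\frac12}$ we use a Lyapunov/hitting argument. The process started above $n_{\frac12}$ comes down to $n_{\frac12}$ in a time with an exponential moment of order $c_0$. Indeed, $M_K$ is precisely the maximal expected hitting time $\mathbb{E}_n T_{n_{\frac12}}$ via the birth–death formula, so Kac/Markov iteration gives
\[
\mathbb{P}_n\bigl(T_{n_{\frac12}}>t/2\bigr)\le Ce^{-c_0t/4}.
\]
The strong Markov property at $T_{n_{\frac12}}$ then reduces this case to Step 2, producing the $e^{-c_0t/4}$ term.

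\textbf{Step 4 (mass at zero).} This is the combination step. Since
\[
\mathbb{P}_n(X_t=0)=1-P_t\mathbbm{1}(n),
\]
the same decomposition gives
\[
1-P_t\mathbbm{1}(n)=1-d_ne^{-\rho_1t}+\text{error}.
\]
Comparing $d_ne^{-\rho_1t}$ with $\alpha_n$ costs
\[
|d_n-\alpha_n|+\alpha_n(1-e^{-\rho_1t})\le CK\rho_1e^{-\rho_1t}+C(1-e^{-\rho_1t}).
\]
Summing the discrepancies on $\{0\}$ and on $\mathbb{N}^*$ gives \eqref{equation TV}.

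\textbf{Step 5 (the conditional bound).} We write
\[
\frac{P_t(n,A)}{P_t(n,\mathbb{N}^*)}-\nu(A)=\frac{\mathrm{Rem}(A)-\nu(A)\mathrm{Rem}(\mathbb{N}^*)}{d_ne^{-\rho_1t}+\mathrm{Rem}(\mathbb{N}^*)}.
\]
When the remainders are at most $\tfrac12 d_ne^{-\rho_1t}$, the ratio is $O(1)\,\mathrm{Rem}/(d_ne^{-\rho_1t})$; otherwise the trivial bound $1$ applies. Dividing the remainder bounds by $e^{-\rho_1t}$ gives the factor $e^{-(\rho_2-\rho_1)t/2}$ after rebalancing the $t/2$ split. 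The tail term becomes $e^{-(c_0/4-\rho_1)t}\le e^{-c_0t/8}$, since $\rho_1$ is exponentially small.

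\textbf{Remaining estimates.} The bound $B_K\le e^{C'K}$ follows from Laplace bounds: $\pi_j\ge e^{-CK}$ on $[1,n_{\frac12}]$ and $S_K\le e^{CK}$, the latter using (A3) for the tail. Positivity of $c_0$ is Proposition~\ref{proposition7.3}.

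\textbf{Main obstacle.} The hardest step is the uniform-in-$n$ tail control, Step 3. It requires showing that $M_K$ is bounded uniformly in $K$, which uses $\tilde\lambda/\tilde\mu\le\frac12$ beyond $x_{\frac12}$ so that $\sum_{p>j}\pi_p\le 2\pi_{j+1}$, together with the integrability condition in (A3). It also requires that this supremum, as a bound on expected coming-down times, holds uniformly over arbitrarily large initial states.
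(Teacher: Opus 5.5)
Your overall architecture is the paper's: spectral projection with the Cauchy--Schwarz bound $e^{-\rho_2t}\sqrt{S_K/\pi_n}$ for $n\le n_{\frac12}(K)$, the hitting time $T=T_{n_{\frac12}(K)}$ with moments controlled by $M_K$, the strong Markov property, and a two-case argument for the conditioned law. However, Step~1 is justified by a false claim.

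The reversible weights need not concentrate near $n_2(K)$. Indeed, $\pi_1=1/\widetilde{\mu}(1/K)$ is of order one, whereas the $\pi$-mass near $n_2(K)$ is of order $K^{-1/2}e^{KH(0)}$, and $H(0)$ may be negative. Even when $H(0)>0$, the ratio of the $\pi$-mass of $[1,n_1(K)]$ to the mass near $n_2(K)$ is of order $\sqrt K e^{-KH(0)}$. This is much larger than $e^{-KH(x_1)}$, because $H$ increases on $(0,x_1)$. So an argument based on the mass of $\pi$ cannot give an $O(K\rho_1)$ error.

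The smallness comes from the factor $\Phi_n$ itself. Writing $a_K=u^0_{n_2(K)}$, one has
\[
a_K\langle\Phi,\mathbbm{1}\rangle_\pi-\|\Phi\|_\pi^2=\sum_{n<n_2(K)}\pi_n\Phi_n\,(a_K-\Phi_n).
\]
For $n\le n_1(K)$, the bound $\Phi_n\le O(1)\frac{n}{\sqrt K}\exp\{K[F(\frac{n_1(K)}{K})-F(\frac nK)]\}$ cancels the factor $e^{KF(n/K)}$ in $\pi_n/\pi_{n_2(K)}$. What remains is $\pi_{n_2(K)}e^{-KH(x_1)}$ up to polynomial factors in $K$. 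On $(n_1(K),n_2(K))$, the analogous bound on $a_K-\Phi_n$ does the same job. Combined with $\|\Phi\|_\pi^2\ge c\sqrt K\,\pi_{n_2(K)}$, this gives the $O(K\rho_1)$ error, and hence $d_n=\alpha_n(1+O(K\rho_1))$. You need this $\Phi$-weighted cancellation, not concentration of $\pi$.

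The second gap is in Step~3. Applying the strong Markov property at $T$ and then Step~2 at $N=n_{\frac12}(K)$ produces the principal term $e^{-\rho_1t}d_N\nu(A)\,\mathbb{E}_n[e^{\rho_1T}\mathbbm{1}_{\{T<t/2\}}]$, not $e^{-\rho_1t}d_n\nu(A)$. The missing ingredient is the identity $\phi_n=\phi_N\,\mathbb{E}_n[e^{\rho_1T}]$ for $n\ge N$. It follows by optional stopping of $e^{\rho_1(t\wedge T)}\phi_{X^K_{t\wedge T}}$, using three facts: $\phi$ is bounded, $\rho_1\le c_0/4$, and $\sup_{n\ge N}\mathbb{E}_n[e^{3c_0T/4}]\le4$ (from $\mathbb{E}_n[T^m]\le m!\,M_K^m$). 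With this identity, the strong Markov decomposition in the proof of Proposition~\ref{proposition7.3} is exact. Every error term then carries the factor $e^{-\rho_1t}$ times a decaying exponential.

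Without the identity, you only get $d_N\mathbb{E}_n[e^{\rho_1T}]-d_n=O(K\rho_1)$. That suffices for \eqref{equation TV}, but it leaves a non-decaying $O(K\rho_1)e^{-\rho_1t}$ defect. Your Step~5 bounds $\mathrm{Rem}$ in sup norm, so it cannot absorb this defect into \eqref{ptnAptnN}. To rescue it, you would have to observe that the defect is proportional to $\nu$ and therefore cancels in the ratio.

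Relatedly, do not halve time for $n\le n_{\frac12}(K)$. Dividing $B_Ke^{-\rho_2t/2}$ by $e^{-\rho_1t}$ yields only the rate $\rho_2/2-\rho_1$, which is weaker than $(\rho_2-\rho_1)/2$. The paper keeps $B_Ke^{-\rho_2t}$ on that range. In the tail, it writes $e^{-\rho_2(t-T)}=e^{-\rho_1t}e^{\rho_1T}e^{-(\rho_2-\rho_1)(t-T)}$ before taking expectations, which is the ``rebalancing'' you allude to.
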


A sharper version, which records the pointwise estimate, is given in Remark \ref{remark pointwise}. 

\begin{mainremark}
{\normalfont
The estimate in Theorem~\ref{theorem of TV} is most informative on
an intermediate metastable time scale. Such a scale exists under
the standing assumptions. Indeed, Lemma~\ref{lem:coarse-spectral-separation}
gives a constant $c_{\mathrm{sp}}>0$, independent of $K$, such that
\[
\rho_2(K)\ge\frac{c_{\mathrm{sp}}}{K}
\quad\text{for all sufficiently large }K.
\]
Together with $B_K\le O(1)e^{C'K}$ and \eqref{rho1}, this implies that
the choice $t_K=K^3$ satisfies
\[
e^{-\frac{c_0}{4}t_K}+B_K e^{-\frac{\rho_2(K)}{2} t_K}\to0,
\quad \rho_1(K)t_K\to0.
\]
More generally, the same conclusions hold whenever
$t_K/K^2\to\infty$ and $\rho_1(K)t_K\to0$. The first condition
is a sufficient, nonoptimal relaxation scale, while the second
ensures that extinction from the positive metastable region remains
unlikely. The following conclusions apply to any sequence $t_K$
for which the relaxation terms in \eqref{equation TV} vanish and
$\rho_1(K)t_K\to0$.

For $n\leq n_2(K)$, Theorem~\ref{theorem of TV} then yields
\[
    \mathbb{P}_n\bigl(X_{t_K}^K\in\cdot\bigr)
    \approx
    \alpha_n(K)\nu(\cdot)
    +
    \bigl(1-\alpha_n(K)\bigr)\delta_0(\cdot),
\]
where
\[
    \alpha_n(K)
    =
    \mathbb{P}_n\bigl(T_{n_2(K)}<T_0\bigr)
\]
is the probability of reaching the positive metastable region before
absorption; see \eqref{eq:alpha-hitting}. Thus, the quasi-stationary and absorbing components 
correspond to trajectories that enter the positive 
metastable region and to those absorbed during 
the initial transient phase, respectively.

The mixing coefficient exhibits a nontrivial transition across an
$O(\sqrt K)$ neighborhood of the Allee threshold. Indeed, suppose
that a sequence $(n_K)_{K\geq1}$ satisfies
\[
    \frac{n_K-n_1(K)}{\sqrt K}\to z
\]
for a constant $z\in\mathbb{R}$. By
Corollary~\ref{prop:transition-profile},
\[
    \alpha_{n_K}(K)
    \to
    p(z)
    :=
    \mathcal{N}\left(\sqrt{-H''(x_1)}\,z\right),
\]
where $\mathcal{N}$ is the standard normal distribution function
defined in Section~\ref{subsec:notation}. Consequently, provided that the corresponding
initial-relaxation terms vanish,
\[
    \left\|
        \mathbb{P}_{n_K}\bigl(X_{t_K}^K\in\cdot\bigr)
        -
        \left[
            p(z)\nu(\cdot)
            +
            \bigl(1-p(z)\bigr)\delta_0(\cdot)
        \right]
    \right\|_{\mathrm{TV}}
    \to0.
\]

In particular, taking $n_K=n_1(K)$ gives $z=0$, and hence
\[
    \alpha_{n_1(K)}(K)\to \mathcal{N}(0)=\frac12.
\]
More precisely, Proposition~\ref{un2K=2+O} gives
\[
    \alpha_{n_1(K)}(K)
    =
    \frac12
    +
    O\left(\frac{(\ln K)^3}{\sqrt K}\right).
\]
Therefore,
\[
    \mathbb{P}_{n_1(K)}
    \bigl(X_{t_K}^K\in\cdot\bigr)
    \approx
    \frac12\nu(\cdot)
    +
    \frac12\delta_0(\cdot).
\]
Thus, a population starting at the Allee threshold reaches the
positive metastable region or becomes extinct during the initial
transient phase with asymptotically equal probabilities. In this
sense, $n_1(K)$ is both the deterministic separatrix and the center
of the probabilistic transition layer between the two competing
transient outcomes.

For initial states already in the positive stable region, $n\geq n_2(K)$, one has $\alpha_n(K)=1$, and therefore
\[
    \mathbb{P}_n\bigl(X_{t_K}^K\in\cdot\bigr)
    \approx
    \nu(\cdot).
\]
Hence, after the initial relaxation and before the extinction time
scale $\rho_1(K)^{-1}$, the law of a population starting in the positive stable
region is well approximated by the quasi-stationary distribution.
}
\end{mainremark}

Finally, the quasi-stationary distribution admits a discrete Gaussian
approximation near the positive stable equilibrium.
\begin{mainthm}\label{Gaussian approximation}
Set $\sigma^2=1/H''(x_2)$, and let $G^K$ be the probability measure
on $\mathbb{N}^*$ defined by
\[
G_n^K:=\frac{1}{Z(K)}
\exp\left(-\frac{(n-n_2(K))^2}{2K\sigma^2}\right),
\quad
Z(K):=\sum_{n=1}^\infty
\exp\left(-\frac{(n-n_2(K))^2}{2K\sigma^2}\right).
\]
Then $Z(K)=\sqrt{2\pi K}\sigma+O(1)$, and for every sufficiently large $K$,
\[
\|\nu^K-G^K\|_{\mathrm{TV}}
\le O\left(\frac{(\ln K)^4}{\sqrt K}\right).
\]
\end{mainthm}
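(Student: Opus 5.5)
The proof compares both $\nu^K$ and $G^K$ with the normalized reversible weights truncated to a window around $n_2(K)$. By Theorem~\ref{UniqueQSD}, $\nu_n=\pi_n\phi_n/\langle\phi,\mathbbm{1}\rangle_\pi$. By Theorem~\ref{theorem of principal eigenvalue and eigenfunction}, $\phi_n=\Phi_n(1+O(K\rho_1))$ uniformly in $n$, and $K\rho_1$ is exponentially small. Moreover $\Phi_n=u^0_{n_2(K)}$ for $n\ge n_2(K)$, and $u_n^0/u^0_{n_2(K)}=1+O(e^{-cK})$ for $n\ge n_1(K)+K^{1/2}\ln K$, say, since the increments $w_j$ decay like $e^{K(H(j/K)-H(x_1))}$ past $x_1$. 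Hence on a window $I_K:=\{|n-n_2(K)|\le \sqrt K\ln K\}$ we have $\phi_n=u^0_{n_2}(1+O(e^{-cK}))$. Outside the window, $\phi_n\le u^0_{n_2}(1+o(1))$, because $\Phi$ is nondecreasing.

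Consequently $\nu_n$ is, up to a factor $1+O(e^{-cK})$, proportional to $\pi_n$ on $I_K$, and dominated by a multiple of $\pi_n$ off $I_K$. It therefore suffices to prove
\[
\sum_{n\notin I_K}\pi_n\Big/\sum_{n}\pi_n=O(K^{-1/2}),\qquad
\Big\|\tfrac{\pi\mathbbm{1}_{I_K}}{\pi(I_K)}-\tfrac{G\mathbbm{1}_{I_K}}{G(I_K)}\Big\|_{\mathrm{TV}}=O\big((\ln K)^4/\sqrt K\big),
\]
together with $G^K(I_K^c)=O(K^{-1/2})$. The last bound is a standard Gaussian tail: the tail mass is $e^{-c(\ln K)^2}$.

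The key step is a discrete Laplace expansion of $\ln\pi_n$. Write
\[
\ln\pi_n=\sum_{j=1}^{n-1}\ln\frac{\widetilde\lambda(j/K)}{\widetilde\mu(j/K)}-\ln(n\widetilde\mu(n/K))+\ln\frac{1}{\dots}
\]
and use Euler--Maclaurin with (H) to obtain
\[
\ln\pi_n=-KH(n/K)+a(n/K)+C_K+O(1/K)
\]
uniformly for $n/K$ in compacts, where $a$ is $C^1$ and $C_K$ is a constant. Taylor-expanding around $x_2$ with $H(x_2)=H'(x_2)=0$ gives, for $n\in I_K$,
\[
KH(n/K)=\frac{H''(x_2)(n-Kx_2)^2}{2K}+O\Big(\frac{|n-Kx_2|^3}{K^2}\Big)=\frac{(n-n_2)^2}{2K\sigma^2}+O\Big(\frac{(\ln K)^3}{\sqrt K}\Big).
\]
Here the shift $\epsilon_2(K)<1$ contributes $O(\ln K/\sqrt K)$, and $a(n/K)-a(x_2)=O(\ln K/\sqrt K)$. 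Hence $\pi_n/\pi_{n_2}=G_n/G_{n_2}\,(1+O((\ln K)^3/\sqrt K))$ uniformly on $I_K$, which yields the TV bound on the window; the extra $\ln K$ allows slack for the window width. The claim $Z(K)=\sqrt{2\pi K}\sigma+O(1)$ follows by comparing the sum with the Gaussian integral: the error is controlled by the total variation of the summand, which is $O(1)$, and the missing $n\le0$ part is exponentially small.

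The main obstacle is the tail bound on $\pi$ away from $n_2$. I would split the complement of $I_K$ into three regions.

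\emph{Region $[1,n_1]$.} By (A2), $H(x)>H(x_2)=0$ except at $x_2$, with $H$ large near $0$ relative to $x_2$ when $c>0$. If $c\le0$ the left region could compete, but $\pi_n$ for $n$ small is controlled by $H$ on $(0,x_1)$. I would check carefully that $\min_{[0,x_2-\delta]}H>0$. This holds since $H(x_2)=0$ is the global minimum on $[x_1,\infty)$ and $H(0)$ versus $0$ is governed by $c$. If $c<0$, the left mass would dominate, so presumably $\phi_n\approx u^0_n\ll u^0_{n_2}$ there compensates. Indeed $\pi_n u_n^0$ is then bounded by $\pi_n\sum_{j<n}w_j\lesssim \pi_{n}w_{n-1}\cdot O(K)$-type sums $\approx e^{-K(H(x_1)-0)}$-small relative to $u^0_{n_2}\pi_{n_2}$. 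So the bound must be carried out for $\pi_n\phi_n$ rather than for $\pi_n$ alone, giving exponential smallness on $[1,n_2-\sqrt K\ln K]$.

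\emph{Region $[n_2+\sqrt K\ln K,\,n_{\frac12}]$.} Uniform convexity of $H$ near $x_2$ and positivity of $H$ further out give Gaussian decay $e^{-c(\ln K)^2}$.

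\emph{Region beyond $n_{\frac12}$.} Here $\pi_{n+1}/\pi_n\le 1/2$ up to the factor $n/(n+1)$, so the tail is geometric.

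Summing the three regions yields $O(K^{-1/2})$, and collecting the window estimate, the tail estimates, and the normalization comparison gives the stated $O((\ln K)^4/\sqrt K)$.
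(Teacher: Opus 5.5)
Your plan is essentially the paper's proof. It uses the same window $\{|n-n_2(K)|\le\sqrt K\ln K\}$ and the same Taylor expansion of $\ln\pi_n$ at $x_2$; your Euler--Maclaurin step is Lemma~\ref{Gamma n m=sqrt}. It also has $\phi$ flat on the window, a three-region tail estimate, and a sum--integral comparison for $Z(K)$. The paper instead compares the unnormalized sequences $\pi_n\phi_n/(\pi_{n_2}\phi_{n_2})$ and $e^{-(n-n_2)^2/(2K\sigma^2)}$ in $\ell^1$, while you normalize on the window; this makes no difference.

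One step as written is false, however. The reduction to $\sum_{n\notin I_K}\pi_n\big/\sum_n\pi_n=O(K^{-1/2})$, where $I_K$ is your window, fails whenever $c=H(0)\le0$, including $c=0$. The same goes for the claim $\min_{[0,x_2-\delta]}H>0$. In that case $\sum_n\pi_n\asymp1$ while $\pi(I_K)\asymp e^{cK}/\sqrt K$, so almost all of the $\pi$-mass lies off the window.

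The low-population tail therefore has to be bounded for $\pi_n\phi_n$ from the start. Your closing computation via $\pi_nw_{n-1}=1/\mu_n$ does exactly this: it gives $\pi_nu^0_n\le O(1)u^0_1$ for $n\le n_1(K)$. Hence $\sum_{n\le n_1(K)}\pi_n\phi_n\le O(K^{3/2}e^{-KH(x_1)})\,\pi_{n_2}\phi_{n_2}$ for every sign of $c$, which is the bound the paper uses. The reduction should be stated in that form rather than for $\pi$ alone.

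A smaller point: $u^0_n/u^0_{n_2(K)}=1+O(e^{-cK})$ is true on $I_K$ but not for all $n\ge n_1(K)+\sqrt K\ln K$. There the defect is only of order $e^{-c(\ln K)^2}$. Since you use this only on $I_K$, it is harmless.
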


\subsection{Comparison with previous results, technical difficulties,
and novelty}

The closest discrete counterpart of the present work is the
monostable setting studied in \cite{Chazottes2016}. In that setting,
the origin is repelling for the limiting deterministic dynamics,
whereas there is a unique positive asymptotically stable equilibrium.
Consequently, every positive macroscopic initial condition is
deterministically driven toward the same positive equilibrium once
the population moves away from the absorbing boundary. There is
therefore essentially a single pre-extinction scenario: after an
initial relaxation, the population enters the positive metastable
region, remains there for a long time, and eventually becomes extinct
through a rare fluctuation.

The bistable regime considered here has a fundamentally different
structure. The extinction state $0$ and the positive equilibrium
$x_2$ are both locally asymptotically stable and are separated by
the unstable Allee threshold $x_1$. Consequently, two competing
transient outcomes coexist: the process may be absorbed before
reaching the positive stable region, or it may cross the threshold
and enter a long-lived metastable regime near $x_2$. In contrast to the
monostable setting, the initial state therefore plays a decisive role in determining the probabilities of these two outcomes.

This probabilistic basin-selection mechanism is encoded in the global profile of the principal eigenvector. Theorem~\ref{theorem of TV}
shows that the transient law is approximated by an
initial-state-dependent mixture of the quasi-stationary distribution
and the Dirac mass at extinction, whose mixing coefficient is precisely the probability of reaching the positive stable region before absorption.
Moreover, Corollary~\ref{prop:transition-profile} identifies a
Gaussian transition of this coefficient across an $O(\sqrt K)$
neighborhood of the Allee threshold. Thus, the deterministic
separatrix gives rise to a probabilistic transition layer between early extinction and positive metastability, a phenomenon that does not occur in the monostable framework considered in \cite{Chazottes2016}.

The geometry governing the principal eigenvalue is also different.
In the monostable regime, extinction requires a rare fluctuation from
the positive stable equilibrium directly toward the absorbing
boundary, and the relevant potential barrier is anchored at $0$.
In the present bistable regime, extinction from the positive stable
region proceeds through a two-stage mechanism: a rare fluctuation
first carries the population across the interior unstable threshold
$x_1$, after which the deterministic drift rapidly drives it toward
$0$. Since $H(x_2)=0$, the exponentially slow part of extinction
is governed by the interior barrier
\[
    H(x_1)-H(x_2)=H(x_1).
\]
The sharp asymptotic formula for $\rho_1(K)$ obtained above shows
that its leading prefactor is determined by the local geometry at
both the unstable threshold $x_1$ and the positive stable
equilibrium $x_2$. This interior-barrier geometry differs from the
boundary-barrier geometry in \cite{Chazottes2016}: in the present
scaling, it produces an order-one prefactor rather than the
boundary-induced $K$-dependent prefactor arising in the monostable
case. Thus, bistability changes not only the action governing
extinction, but also the sharp pre-exponential factor.

The continuous bistable model studied in
\cite{Yan2026quasistationary} and the present birth--death model share
a bistable deterministic phase portrait. A comparison of their low-lying
spectra must also account for the stochastic dynamics near the absorbing
boundary. For the present process, freezing the per-capita rates near
$0$ produces a subcritical linear birth--death operator, whose boundary
relaxation rate is
\[
\widetilde{\mu}(0)-\widetilde{\lambda}(0).
\]
More precisely, the spectral-gap asymptotics established in \cite[Theorem A]{zhou26} yield
\[
    \lim_{K\to\infty}
    \bigl(\rho_2(K)-\rho_1(K)\bigr)
    =
    \min\left\{
        \widetilde{\mu}(0)-\widetilde{\lambda}(0),
        V'(x_1),
        -V'(x_2)
    \right\},
\]
where $V(x)=x(\widetilde{\lambda}(x)-\widetilde{\mu}(x))$  is the deterministic drift. Thus, the asymptotic relaxation rate is selected by the slowest among
three distinct local mechanisms: the boundary relaxation near $0$,
the instability at the Allee threshold $x_1$, and the local
relaxation near the positive stable equilibrium $x_2$. We invoke
this result only to interpret the relaxation terms appearing in
Theorem~\ref{theorem of TV}; its proof lies outside the scope of the
present paper. The existence of the metastable time window used here
follows independently from the coarser bound
$\rho_2(K)\ge c_{\mathrm{sp}}/K$ proved in
Lemma~\ref{lem:coarse-spectral-separation}. The boundary contribution
depends on the local stochastic
scaling; its presence or absence is not determined solely by whether
the state space is discrete or continuous.

These structural differences lead to new analytical difficulties.
First, because the absorbing state is stable, survival conditioning
acts against the deterministic drift throughout the entire
low-population basin $(0,x_1)$, rather than only near the absorbing
boundary. Consequently, the principal eigenvector cannot be
determined solely from its localization near the positive stable
equilibrium. Its global profile must simultaneously encode the
probability of escaping the extinction basin, crossing the unstable
threshold, and entering the positive metastable region. Resolving
the resulting basin-selection mechanism requires uniform control of
the eigenvector across the $O(\sqrt K)$ transition layer around
$n_1(K)$, rather than only pointwise information at the threshold.

Second, the eigenvalue equation for the killed birth--death
generator is a nonconstant-coefficient second-order recurrence whose
dominant behavior changes across four regions: the low-population
extinction basin, the unstable threshold, the positive stable
equilibrium, and the large-population tail. Local constructions in
these regions must be matched while retaining estimates that are
uniform over the entire state space. This global matching argument
is the main analytical ingredient behind both the sharp
principal-eigenvalue asymptotics and the uniform approximation of
the associated eigenvector.

\subsection{Proof strategy and organization}

We begin by realizing the killed generator as a self-adjoint
Jacobi-type operator on $\ell^2(\pi)$. To identify its principal
eigenpair, we construct solutions of the associated second-order
recurrence in two overlapping regions. On the finite interval
extending from the absorbing boundary to the positive stable region,
we construct a perturbation of the normalized zero-energy solution.
Beyond the positive equilibrium, a bounded integral operator on
sequences and its Neumann series produce a solution with the required
behavior at infinity. Matching
these two solutions near $n_2(K)$ yields a scalar characteristic
equation whose unique small zero is the principal eigenvalue.
Discrete Laplace estimates for the reversible weights then give the
sharp prefactor in \eqref{rho1} and uniform control of the associated
positive eigenvector over the whole state space. The same kernel
estimates also yield a coarse lower bound of order $1/K$ for
$\rho_2(K)$, which suffices to separate a polynomial relaxation
scale from the exponentially long extinction scale.

The global eigenvector estimates also identify the
initial-state-dependent basin-selection probability. A local
quadratic expansion of $H$ around the unstable threshold $x_1$,
combined with estimates showing that the contributions away from
$x_1$ are negligible, yields the Gaussian transition profile across
the $O(\sqrt K)$ threshold layer. The probabilistic approximation of
the transient law then follows from the spectral decomposition of the
killed semigroup, together with hitting-time estimates and uniform
bounds on the weighted eigenvectors. These estimates separate the
rapid approach to either extinction or the positive metastable region
from the exponentially slow escape from that region. Finally, a
local expansion of $H$ around the positive stable equilibrium
$x_2$, combined with tail estimates away from $x_2$, yields the
Gaussian approximation of the quasi-stationary distribution.

The remainder of the paper is organized as follows.
Section~\ref{sec:preliminaries} collects the deterministic, spectral,
and analytic preliminaries needed in the subsequent analysis.
In Section~\ref{section of principal eigenvalue}, we construct and
match solutions of the eigenvalue recurrence in different population
regions, derive the sharp asymptotics of the principal eigenvalue,
and establish uniform estimates for the associated positive
eigenvector. The Gaussian transition profile of the basin-selection
probability near the Allee threshold and the coarse spectral
separation bound are also obtained there.
In Section~\ref{QSD and Gauss}, we establish the existence and
uniqueness of the quasi-stationary distribution and derive pointwise
and uniform convergence estimates for the conditioned and
unconditioned laws in total variation.
Section~\ref{section of Gaussian approximation} is devoted to the
Gaussian approximation of the quasi-stationary distribution near the
positive stable equilibrium.
In Section~\ref{section of Application}, we apply the general results
to a logistic birth--death process with a strong Allee effect.
The appendix collects the auxiliary estimates used throughout the
paper.

\section{Spectral and analytic preliminaries}
\label{sec:preliminaries}

\subsection{Deterministic structure and the discrete potential}
\label{subsec:deterministic-structure}

We first explain the deterministic structure encoded in the assumptions.
If $X_0^K/K\to x_0$ in probability, the density-dependent Markov-chain
limit theorem of \cite{Kurtz} implies that $X_t^K/K$ converges in
probability, uniformly on every fixed finite time interval, to the
solution with initial condition $x(0)=x_0$ of
\[
\frac{dx}{dt}
=
V(x),
\quad
V(x)
:=
x\bigl(
\widetilde{\lambda}(x)
-
\widetilde{\mu}(x)
\bigr).
\]
The equilibria are $0,x_1$, and $x_2$. In the bistable regime,
$0$ and $x_2$ are locally asymptotically stable, whereas $x_1$ is
unstable. Thus, $x_1$ represents the Allee threshold and $x_2$ represents
the positive stable population level (see Figure \ref{fig:lambdamu}).

\begin{figure}[htbp]
\centering
\begin{tikzpicture}[x=2.25cm,y=0.18cm,font=\small,
  line cap=round,line join=round]
% The illustrative rates are lambda(x)=4x+2 and mu(x)=x^2+5.
\path[fill=FigDeath!12]
  plot[domain=0:1,samples=35] (\x,{\x*\x+5})
  -- plot[domain=1:0,samples=35] (\x,{4*\x+2}) -- cycle;
\path[fill=FigBirth!11]
  plot[domain=1:3,samples=60] (\x,{4*\x+2})
  -- plot[domain=3:1,samples=60] (\x,{\x*\x+5}) -- cycle;
\path[fill=FigDeath!12]
  plot[domain=3:4.05,samples=40] (\x,{\x*\x+5})
  -- plot[domain=4.05:3,samples=40] (\x,{4*\x+2}) -- cycle;

\draw[paper guide] (1,-5.2)--(1,6);
\draw[paper guide] (3,-5.2)--(3,14);
\draw[paper axis] (0,0)--(4.48,0) node[right,text=FigInk] {$x$};
\draw[paper axis] (0,0)--(0,23.5);
\node[anchor=south west,text=FigInk] at (0,24) {Per-capita rates};

\draw[FigBirth,line width=1.2pt,domain=0:4.05,samples=80]
  plot (\x,{4*\x+2});
\draw[FigDeath,line width=1.15pt,dash pattern=on 3.2pt off 1.8pt,
  domain=0:4.05,samples=100] plot (\x,{\x*\x+5});
\node[anchor=west,text=FigBirth] at (4.10,18.2) {$\widetilde{\lambda}(x)$};
\node[anchor=west,text=FigDeath] at (4.10,21.5) {$\widetilde{\mu}(x)$};
\fill[FigBirth] (0,2) circle[radius=1.55pt];
\fill[FigDeath] (0,5) circle[radius=1.55pt];
\fill[FigInk] (1,6) circle[radius=1.65pt];
\fill[FigInk] (3,14) circle[radius=1.65pt];
\node[below left,inner sep=2pt,text=FigInk] at (0,0) {$0$};
\node[below,fill=white,inner sep=2pt,text=FigInk] at (1,0) {$x_1$};
\node[below,fill=white,inner sep=2pt,text=FigInk] at (3,0) {$x_2$};

% Aligned deterministic phase line.
\draw[FigGuide!75,line width=0.5pt] (0,-5.2)--(4.28,-5.2);
\node[anchor=east,text=FigInk] at (-0.10,-5.2) {$\dot x=V(x)$};
\draw[paper flow] (0.78,-5.2)--(0.18,-5.2);
\draw[paper flow] (1.35,-5.2)--(2.62,-5.2);
\draw[paper flow] (4.03,-5.2)--(3.28,-5.2);
\node[text=FigDeath,font=\footnotesize] at (0.5,-3.65) {$V<0$};
\node[text=FigBirth,font=\footnotesize] at (2,-3.65) {$V>0$};
\node[text=FigDeath,font=\footnotesize] at (3.65,-3.65) {$V<0$};
\fill[FigInk] (0,-5.2) circle[radius=2.1pt];
\draw[FigInk,fill=white,line width=0.8pt] (1,-5.2) circle[radius=2.1pt];
\fill[FigInk] (3,-5.2) circle[radius=2.1pt];
\node[below,text=FigInk,font=\footnotesize,inner sep=3pt] at (0,-5.2) {stable};
\node[below,text=FigInk,font=\footnotesize,inner sep=3pt] at (1,-5.2) {unstable};
\node[below,text=FigInk,font=\footnotesize,inner sep=3pt] at (3,-5.2) {stable};
\end{tikzpicture}
\caption{Per-capita rates and the deterministic phase line in the strong
Allee regime. The birth rate (solid) and death rate (dashed) intersect
at $x_1$ and $x_2$. On the phase line, arrows indicate the drift
direction; filled circles mark the stable equilibria $0,x_2$, and
the open circle marks the unstable threshold $x_1$.}
\label{fig:lambdamu}
\end{figure}

The function $H$ defined in \eqref{Hx} satisfies
\[
H'(x)
=
\ln\frac{\widetilde{\mu}(x)}
          {\widetilde{\lambda}(x)}.
\]
In particular,
\[
H'(x_1)=H'(x_2)=0.
\]
Moreover, in the bistable regime,
\[
H''(x_1)<0,
\quad
H''(x_2)>0.
\]
Hence $x_1$ is a local maximum of $H$, whereas $x_2$ is a local
minimum. Since $H(x_2)=0$, the quantity $H(x_1)$ represents the
potential barrier separating the positive stable region from the extinction
basin.

The sign of $H(0)$ (see Figure \ref{fig:functionh}) is determined by the competition between
\[
\int_0^{x_1}
\ln\frac{\widetilde{\mu}(s)}
          {\widetilde{\lambda}(s)}\,ds
\]
and
\[
-\int_{x_1}^{x_2}
\ln\frac{\widetilde{\mu}(s)}
          {\widetilde{\lambda}(s)}\,ds.
\]

\begin{figure}[htbp]
\centering
\begin{tikzpicture}[x=0.95cm,y=1.18cm,font=\small,
  line cap=round,line join=round]
\begin{scope}[xshift=0.72cm]
\node[text=FigInk] at (1.55,-1.05) {(a) $H(0)>0$};
\draw[paper axis] (0,0)--(4.03,0) node[right,text=FigInk] {$x$};
\draw[paper axis] (0,-0.80)--(0,2.08) node[above,text=FigInk] {$H$};
\draw[paper guide] (1,0)--(1,1.6);
\draw[paper guide] (0,1.6)--(1,1.6);
\node[below left,inner sep=2pt,text=FigInk] at (0,0) {$0$};
\node[below,inner sep=3pt,text=FigInk] at (1,0) {$x_1$};
\node[below,inner sep=3pt,text=FigInk] at (3,0) {$x_2$};
\draw[FigPotential,line width=1.15pt] plot[smooth] coordinates {
(0.00000,0.55000) (0.02278,0.59913) (0.04556,0.64703) (0.06834,0.69370) (0.09112,0.73914)
(0.11391,0.78337) (0.13669,0.82637) (0.15947,0.86815) (0.18225,0.90873) (0.20503,0.94809)
(0.22781,0.98624) (0.25059,1.02318) (0.27337,1.05893) (0.29615,1.09347) (0.31893,1.12683)
(0.34172,1.15899) (0.36450,1.18997) (0.38728,1.21976) (0.41006,1.24838) (0.43284,1.27583)
(0.45562,1.30212) (0.47840,1.32724) (0.50118,1.35120) (0.52396,1.37402) (0.54675,1.39569)
(0.56953,1.41623) (0.59231,1.43563) (0.61509,1.45391) (0.63787,1.47107) (0.66065,1.48712)
(0.68343,1.50206) (0.70621,1.51591) (0.72899,1.52867) (0.75178,1.54035) (0.77456,1.55095)
(0.79734,1.56050) (0.82012,1.56898) (0.84290,1.57642) (0.86568,1.58282) (0.88846,1.58820)
(0.91124,1.59255) (0.93402,1.59590) (0.95680,1.59825) (0.97959,1.59961) (1.00000,1.60000)
(1.00237,1.59999) (1.02515,1.59941) (1.04793,1.59788) (1.07071,1.59540) (1.09349,1.59199)
(1.11627,1.58766) (1.13905,1.58242) (1.16183,1.57629) (1.18462,1.56928) (1.20740,1.56141)
(1.23018,1.55268) (1.25296,1.54311) (1.27574,1.53271) (1.29852,1.52151) (1.32130,1.50951)
(1.34408,1.49672) (1.36686,1.48318) (1.38964,1.46888) (1.41243,1.45386) (1.43521,1.43811)
(1.45799,1.42167) (1.48077,1.40455) (1.50355,1.38676) (1.52633,1.36833) (1.54911,1.34927)
(1.57189,1.32961) (1.59467,1.30935) (1.61746,1.28852) (1.64024,1.26715) (1.66302,1.24524)
(1.68580,1.22283) (1.70858,1.19993) (1.73136,1.17656) (1.75414,1.15275) (1.77692,1.12852)
(1.79970,1.10389) (1.82249,1.07888) (1.84527,1.05353) (1.86805,1.02784) (1.89083,1.00185)
(1.91361,0.97558) (1.93639,0.94906) (1.95917,0.92231) (1.98195,0.89536) (2.00473,0.86824)
(2.02751,0.84097) (2.05030,0.81357) (2.07308,0.78609) (2.09586,0.75855) (2.11864,0.73097)
(2.14142,0.70339) (2.16420,0.67583) (2.18698,0.64833) (2.20976,0.62092) (2.23254,0.59363)
(2.25533,0.56649) (2.27811,0.53953) (2.30089,0.51280) (2.32367,0.48631) (2.34645,0.46011)
(2.36923,0.43423) (2.39201,0.40871) (2.41479,0.38358) (2.43757,0.35887) (2.46036,0.33464)
(2.48314,0.31091) (2.50592,0.28771) (2.52870,0.26510) (2.55148,0.24311) (2.57426,0.22178)
(2.59704,0.20116) (2.61982,0.18127) (2.64260,0.16217) (2.66538,0.14390) (2.68817,0.12649)
(2.71095,0.11000) (2.73373,0.09448) (2.75651,0.07995) (2.77929,0.06647) (2.80207,0.05410)
(2.82485,0.04286) (2.84763,0.03282) (2.87041,0.02402) (2.89320,0.01651) (2.91598,0.01033)
(2.93876,0.00555) (2.96154,0.00222) (2.98432,0.00037) (3.00000,0.00000) (3.00710,0.00008)
(3.02988,0.00138) (3.05266,0.00435) (3.07544,0.00902) (3.09822,0.01546) (3.12101,0.02373)
(3.14379,0.03388) (3.16657,0.04597) (3.18935,0.06006) (3.21213,0.07621) (3.23491,0.09448)
(3.25769,0.11493) (3.28047,0.13763) (3.30325,0.16264) (3.32604,0.19002) (3.34882,0.21984)
(3.37160,0.25216) (3.39438,0.28706) (3.41716,0.32459) (3.43994,0.36484) (3.46272,0.40786)
(3.48550,0.45373) (3.50828,0.50252) (3.53107,0.55431) (3.55385,0.60916) (3.57663,0.66715)
(3.59941,0.72836) (3.62219,0.79286) (3.64497,0.86073) (3.66775,0.93205) (3.69053,1.00689)
(3.71331,1.08535) (3.73609,1.16749) (3.75888,1.25341) (3.78166,1.34319) (3.80444,1.43690)
(3.82722,1.53464) (3.85000,1.63650)
};
\fill[FigDeath] (0,0.55000) circle[radius=1.7pt];
\draw[FigInk,fill=white,line width=0.7pt] (1,1.6) circle[radius=1.9pt];
\fill[FigInk] (3,0) circle[radius=1.9pt];
\draw[paper guide] (-0.43,0)--(0,0);
\draw[paper guide] (-0.43,1.6)--(0,1.6);
\draw[FigInk!85,line width=0.65pt,
  {Stealth[length=1.55mm,width=1.1mm]}-{Stealth[length=1.55mm,width=1.1mm]}]
  (-0.43,0.025)--(-0.43,1.575);
\node[rotate=90,text=FigInk,font=\footnotesize] at (-0.73,0.8) {$H(x_1)$};
\end{scope}
\begin{scope}[xshift=5.47cm]
\node[text=FigInk] at (1.55,-1.05) {(b) $H(0)=0$};
\draw[paper axis] (0,0)--(4.03,0) node[right,text=FigInk] {$x$};
\draw[paper axis] (0,-0.80)--(0,2.08) node[above,text=FigInk] {$H$};
\draw[paper guide] (1,0)--(1,1.6);
\draw[paper guide] (0,1.6)--(1,1.6);
\node[below left,inner sep=2pt,text=FigInk] at (0,0) {$0$};
\node[below,inner sep=3pt,text=FigInk] at (1,0) {$x_1$};
\node[below,inner sep=3pt,text=FigInk] at (3,0) {$x_2$};
\draw[FigPotential,line width=1.15pt] plot[smooth] coordinates {
(0.00000,-0.00000) (0.02278,0.08077) (0.04556,0.15908) (0.06834,0.23495) (0.09112,0.30842)
(0.11391,0.37951) (0.13669,0.44825) (0.15947,0.51467) (0.18225,0.57880) (0.20503,0.64066)
(0.22781,0.70029) (0.25059,0.75771) (0.27337,0.81296) (0.29615,0.86605) (0.31893,0.91702)
(0.34172,0.96589) (0.36450,1.01270) (0.38728,1.05747) (0.41006,1.10024) (0.43284,1.14102)
(0.45562,1.17985) (0.47840,1.21676) (0.50118,1.25177) (0.52396,1.28492) (0.54675,1.31623)
(0.56953,1.34572) (0.59231,1.37344) (0.61509,1.39940) (0.63787,1.42364) (0.66065,1.44618)
(0.68343,1.46705) (0.70621,1.48628) (0.72899,1.50391) (0.75178,1.51994) (0.77456,1.53443)
(0.79734,1.54738) (0.82012,1.55884) (0.84290,1.56883) (0.86568,1.57738) (0.88846,1.58452)
(0.91124,1.59027) (0.93402,1.59466) (0.95680,1.59773) (0.97959,1.59950) (1.00000,1.60000)
(1.00237,1.59999) (1.02515,1.59925) (1.04793,1.59729) (1.07071,1.59414) (1.09349,1.58984)
(1.11627,1.58441) (1.13905,1.57787) (1.16183,1.57027) (1.18462,1.56162) (1.20740,1.55195)
(1.23018,1.54130) (1.25296,1.52969) (1.27574,1.51715) (1.29852,1.50370) (1.32130,1.48939)
(1.34408,1.47422) (1.36686,1.45824) (1.38964,1.44148) (1.41243,1.42395) (1.43521,1.40569)
(1.45799,1.38672) (1.48077,1.36708) (1.50355,1.34680) (1.52633,1.32589) (1.54911,1.30440)
(1.57189,1.28234) (1.59467,1.25975) (1.61746,1.23666) (1.64024,1.21309) (1.66302,1.18907)
(1.68580,1.16463) (1.70858,1.13980) (1.73136,1.11461) (1.75414,1.08909) (1.77692,1.06325)
(1.79970,1.03714) (1.82249,1.01078) (1.84527,0.98420) (1.86805,0.95742) (1.89083,0.93049)
(1.91361,0.90341) (1.93639,0.87623) (1.95917,0.84897) (1.98195,0.82165) (2.00473,0.79432)
(2.02751,0.76699) (2.05030,0.73970) (2.07308,0.71246) (2.09586,0.68532) (2.11864,0.65830)
(2.14142,0.63143) (2.16420,0.60473) (2.18698,0.57824) (2.20976,0.55198) (2.23254,0.52598)
(2.25533,0.50027) (2.27811,0.47488) (2.30089,0.44983) (2.32367,0.42516) (2.34645,0.40089)
(2.36923,0.37706) (2.39201,0.35368) (2.41479,0.33080) (2.43757,0.30842) (2.46036,0.28660)
(2.48314,0.26535) (2.50592,0.24470) (2.52870,0.22468) (2.55148,0.20531) (2.57426,0.18664)
(2.59704,0.16868) (2.61982,0.15146) (2.64260,0.13502) (2.66538,0.11937) (2.68817,0.10456)
(2.71095,0.09060) (2.73373,0.07753) (2.75651,0.06537) (2.77929,0.05415) (2.80207,0.04391)
(2.82485,0.03466) (2.84763,0.02644) (2.87041,0.01928) (2.89320,0.01320) (2.91598,0.00823)
(2.93876,0.00441) (2.96154,0.00175) (2.98432,0.00029) (3.00000,0.00000) (3.00710,0.00006)
(3.02988,0.00108) (3.05266,0.00339) (3.07544,0.00700) (3.09822,0.01196) (3.12101,0.01828)
(3.14379,0.02600) (3.16657,0.03514) (3.18935,0.04574) (3.21213,0.05782) (3.23491,0.07141)
(3.25769,0.08653) (3.28047,0.10322) (3.30325,0.12151) (3.32604,0.14142) (3.34882,0.16298)
(3.37160,0.18623) (3.39438,0.21118) (3.41716,0.23786) (3.43994,0.26632) (3.46272,0.29656)
(3.48550,0.32863) (3.50828,0.36255) (3.53107,0.39835) (3.55385,0.43605) (3.57663,0.47569)
(3.59941,0.51729) (3.62219,0.56089) (3.64497,0.60650) (3.66775,0.65417) (3.69053,0.70391)
(3.71331,0.75576) (3.73609,0.80974) (3.75888,0.86588) (3.78166,0.92422) (3.80444,0.98477)
(3.82722,1.04757) (3.85000,1.11265)
};
\fill[FigDeath] (0,0.00000) circle[radius=1.7pt];
\draw[FigInk,fill=white,line width=0.7pt] (1,1.6) circle[radius=1.9pt];
\fill[FigInk] (3,0) circle[radius=1.9pt];
\end{scope}
\begin{scope}[xshift=10.22cm]
\node[text=FigInk] at (1.55,-1.05) {(c) $H(0)<0$};
\draw[paper axis] (0,0)--(4.03,0) node[right,text=FigInk] {$x$};
\draw[paper axis] (0,-0.80)--(0,2.08) node[above,text=FigInk] {$H$};
\draw[paper guide] (1,0)--(1,1.6);
\draw[paper guide] (0,1.6)--(1,1.6);
\node[below left,inner sep=2pt,text=FigInk] at (0,0) {$0$};
\node[below,inner sep=3pt,text=FigInk] at (1,0) {$x_1$};
\node[below,inner sep=3pt,text=FigInk] at (3,0) {$x_2$};
\draw[FigPotential,line width=1.15pt] plot[smooth] coordinates {
(0.00000,-0.55000) (0.02278,-0.43563) (0.04556,-0.32520) (0.06834,-0.21862) (0.09112,-0.11584)
(0.11391,-0.01679) (0.13669,0.07861) (0.15947,0.17042) (0.18225,0.25870) (0.20503,0.34352)
(0.22781,0.42495) (0.25059,0.50305) (0.27337,0.57789) (0.29615,0.64953) (0.31893,0.71802)
(0.34172,0.78344) (0.36450,0.84584) (0.38728,0.90529) (0.41006,0.96184) (0.43284,1.01556)
(0.45562,1.06650) (0.47840,1.11472) (0.50118,1.16028) (0.52396,1.20324) (0.54675,1.24366)
(0.56953,1.28158) (0.59231,1.31708) (0.61509,1.35019) (0.63787,1.38098) (0.66065,1.40950)
(0.68343,1.43580) (0.70621,1.45994) (0.72899,1.48197) (0.75178,1.50194) (0.77456,1.51990)
(0.79734,1.53590) (0.82012,1.55000) (0.84290,1.56224) (0.86568,1.57267) (0.88846,1.58134)
(0.91124,1.58830) (0.93402,1.59360) (0.95680,1.59729) (0.97959,1.59940) (1.00000,1.60000)
(1.00237,1.59999) (1.02515,1.59911) (1.04793,1.59679) (1.07071,1.59309) (1.09349,1.58805)
(1.11627,1.58170) (1.13905,1.57411) (1.16183,1.56530) (1.18462,1.55532) (1.20740,1.54422)
(1.23018,1.53203) (1.25296,1.51880) (1.27574,1.50456) (1.29852,1.48937) (1.32130,1.47325)
(1.34408,1.45624) (1.36686,1.43839) (1.38964,1.41974) (1.41243,1.40032) (1.43521,1.38016)
(1.45799,1.35932) (1.48077,1.33782) (1.50355,1.31570) (1.52633,1.29300) (1.54911,1.26975)
(1.57189,1.24599) (1.59467,1.22176) (1.61746,1.19708) (1.64024,1.17199) (1.66302,1.14653)
(1.68580,1.12073) (1.70858,1.09462) (1.73136,1.06823) (1.75414,1.04160) (1.77692,1.01476)
(1.79970,0.98774) (1.82249,0.96057) (1.84527,0.93328) (1.86805,0.90591) (1.89083,0.87847)
(1.91361,0.85101) (1.93639,0.82356) (1.95917,0.79613) (1.98195,0.76876) (2.00473,0.74148)
(2.02751,0.71431) (2.05030,0.68729) (2.07308,0.66043) (2.09586,0.63378) (2.11864,0.60734)
(2.14142,0.58116) (2.16420,0.55525) (2.18698,0.52965) (2.20976,0.50437) (2.23254,0.47944)
(2.25533,0.45489) (2.27811,0.43074) (2.30089,0.40702) (2.32367,0.38375) (2.34645,0.36094)
(2.36923,0.33863) (2.39201,0.31685) (2.41479,0.29560) (2.43757,0.27491) (2.46036,0.25481)
(2.48314,0.23532) (2.50592,0.21645) (2.52870,0.19823) (2.55148,0.18068) (2.57426,0.16383)
(2.59704,0.14768) (2.61982,0.13227) (2.64260,0.11760) (2.66538,0.10370) (2.68817,0.09060)
(2.71095,0.07830) (2.73373,0.06683) (2.75651,0.05620) (2.77929,0.04643) (2.80207,0.03755)
(2.82485,0.02956) (2.84763,0.02249) (2.87041,0.01636) (2.89320,0.01117) (2.91598,0.00695)
(2.93876,0.00371) (2.96154,0.00147) (2.98432,0.00025) (3.00000,0.00000) (3.00710,0.00005)
(3.02988,0.00090) (3.05266,0.00281) (3.07544,0.00580) (3.09822,0.00988) (3.12101,0.01506)
(3.14379,0.02136) (3.16657,0.02879) (3.18935,0.03737) (3.21213,0.04712) (3.23491,0.05803)
(3.25769,0.07013) (3.28047,0.08343) (3.30325,0.09795) (3.32604,0.11369) (3.34882,0.13066)
(3.37160,0.14889) (3.39438,0.16838) (3.41716,0.18913) (3.43994,0.21118) (3.46272,0.23452)
(3.48550,0.25916) (3.50828,0.28513) (3.53107,0.31242) (3.55385,0.34105) (3.57663,0.37103)
(3.59941,0.40237) (3.62219,0.43507) (3.64497,0.46916) (3.66775,0.50463) (3.69053,0.54150)
(3.71331,0.57978) (3.73609,0.61947) (3.75888,0.66059) (3.78166,0.70314) (3.80444,0.74713)
(3.82722,0.79258) (3.85000,0.83948)
};
\fill[FigDeath] (0,-0.55000) circle[radius=1.7pt];
\draw[FigInk,fill=white,line width=0.7pt] (1,1.6) circle[radius=1.9pt];
\fill[FigInk] (3,0) circle[radius=1.9pt];
\end{scope}
\end{tikzpicture}
\caption{Schematic potentials for the three signs of $H(0)$, drawn on
common horizontal and vertical scales with $H(x_2)=0$. Each profile
has a local maximum at $x_1$ and a local minimum at $x_2$. The
vertical arrow in panel (a) marks the barrier
$H(x_1)-H(x_2)=H(x_1)$.}
\label{fig:functionh}
\end{figure}

\subsection{Boundary behavior and reversible weights}
\label{subsec:reversible-weights}

By the definition of $\pi_n$, we have the detailed-balance identity
\begin{equation}\label{LambdaPiMu}
\lambda_n\pi_n
=
\mu_{n+1}\pi_{n+1},
\quad n\geq1.
\end{equation}

Assumptions \textbf{(A1)}--\textbf{(A3)} imply that
\begin{equation}\label{TwoSums}
\sum_{n\geq1}\frac1{\lambda_n\pi_n}
=
\infty,
\quad
\sum_{n\geq1}\pi_n
<
\infty.
\end{equation}
The first condition implies that the process is absorbed at $0$ almost
surely, while the second one ensures that
\[
\mathbb{E}_m[T_0]<\infty,
\quad m\in\mathbb{N}^*,
\]
where $T_0$ is the absorption time.

Consider a birth--death process $(X_t^K)_{t\geq0}$ with birth rates
$\lambda=(\lambda_n)$ and death rates $\mu=(\mu_n)$, where
$\lambda_n>0$ and $\mu_n>0$ hold for each $n\ge1$. Define
\[
T_n:=\inf\{t\ge0:X_t^K=n\}.
\]
Then the following lemma gives the necessary and sufficient conditions
for $(X_t^K)_{t\geq0}$ to come down from infinity, which also provides a criterion for the existence and uniqueness of
the quasi-stationary distribution.

\begin{lemma}(\cite{Bansaye2013how}, Proposition 2.3)\label{Proposition 2.3}
Assume that
\[
\sum_{n=1}^{\infty}\frac{1}{\lambda_n\pi_n}=+\infty.
\]
Then the following assertions are equivalent:
\begin{itemize}
\item[{\rm(i)}]
The process $(X_t^K)_{t\geq0}$ comes down from infinity;

\item[{\rm(ii)}]
\[
\sum_{n=1}^{\infty}
\frac{1}{\lambda_n\pi_n}
\sum_{i\ge n+1}\pi_i
<+\infty;
\]

\item[{\rm(iii)}]
\[
\sup_{k\ge0}\mathbb{E}_k[T_0]<+\infty;
\]

\item[{\rm(iv)}]
For all $a>0$, there exists $k_a\in\mathbb{N}$ such that
\[
\sup_{k\ge k_a}
\mathbb{E}_k[\exp(aT_{k_a})]
<+\infty.
\]
\end{itemize}
\end{lemma}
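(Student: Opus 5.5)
We prove the equivalences from an explicit formula for $\mathbb{E}_k[T_0]$. The process is skip-free, so from $k$ it must pass through $k-1,k-2,\dots,1$ before reaching $0$. The strong Markov property then gives
\[
\mathbb{E}_k[T_0]=\sum_{n=0}^{k-1}\mathbb{E}_{n+1}[T_n].
\]
The key formula to establish is
\begin{equation*}
\mathbb{E}_{n+1}[T_n]=\frac{1}{\lambda_n\pi_n}\sum_{i\ge n+1}\pi_i,\quad n\ge1,
\qquad \mathbb{E}_1[T_0]=\sum_{i\ge1}\pi_i .
\end{equation*}
To get it, we first work with the truncated times $T_n\wedge T_N$ for $N>n+1$. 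The function $g(m)=\mathbb{E}_m[T_n\wedge T_N]$ solves the finite linear system $\lambda_m(g(m+1)-g(m))+\mu_m(g(m-1)-g(m))=-1$ for $n<m<N$, with $g(n)=g(N)=0$. We solve this by summation, using the scale function built from $w_j=1/(\lambda_j\pi_j)$ and the identity \eqref{LambdaPiMu}. Letting $N\to\infty$, monotone convergence gives $\mathbb{E}_{n+1}[T_n\wedge T_N]\uparrow\mathbb{E}_{n+1}[T_n\wedge\zeta]$, where $\zeta$ is the explosion/escape time. The boundary contribution carries the factor
\[
\mathbb{P}_{n+1}(T_N<T_n)=\frac{w_n}{\sum_{j=n}^{N-1}w_j},
\]
which tends to $0$ precisely because $\sum_j w_j=\infty$. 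This is where the standing hypothesis enters: it guarantees that $T_n<\infty$ a.s. and that the limit is exactly the displayed formula, with no spurious homogeneous term. This identification, in particular the careful passage to the limit when the tail sum $\sum_{i>n}\pi_i$ might be infinite, is the step I expect to be the main technical obstacle.

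\emph{(ii)$\Leftrightarrow$(iii).} Summing the formula over $n$ gives
\[
\sup_k\mathbb{E}_k[T_0]=\lim_{k\to\infty}\mathbb{E}_k[T_0]=\sum_{i\ge1}\pi_i+\sum_{n\ge1}\frac{1}{\lambda_n\pi_n}\sum_{i\ge n+1}\pi_i .
\]
The first term is dominated by $\pi_1$ plus the $n=1$ term of the second sum. Hence the right-hand side is finite if and only if the series in (ii) converges.

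\emph{(iii)$\Rightarrow$(iv).} If the series in (ii) converges, its tails tend to zero. Given $a>0$, choose $k_a$ such that
\[
\sup_{k\ge k_a}\mathbb{E}_k[T_{k_a}]=\sum_{n\ge k_a}\frac{1}{\lambda_n\pi_n}\sum_{i>n}\pi_i\le\frac{1}{2a}.
\]
We then apply Khasminskii's lemma. By the Markov property at deterministic times (or at successive levels), $\sup_{k\ge k_a}\mathbb{E}_k[T_{k_a}^j]\le j!\,(2a)^{-j}$. Summing the exponential series yields $\sup_{k\ge k_a}\mathbb{E}_k[e^{aT_{k_a}}]\le2$.

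\emph{(iv)$\Rightarrow$(i).} By Markov's inequality, $\mathbb{P}_k(T_{k_a}>t)\le e^{-at}\sup_{k'\ge k_a}\mathbb{E}_{k'}[e^{aT_{k_a}}]$, uniformly in $k\ge k_a$. So for large $t$ we get $\lim_{m}\lim_{k}\mathbb{P}_k(T_m<t)>0$ (indeed close to $1$), which is coming down from infinity.

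\emph{(i)$\Rightarrow$(iii).} Coming down gives $m$, $t$ and $\varepsilon>0$ with $\mathbb{P}_k(T_m\le t)\ge\varepsilon$ for all $k\ge m$. By skip-freeness, a path from any $k'\le k$ reaches $m$ no later than a path from $k$, so this bound also covers restarts from lower states. Iterating with the strong Markov property at times $t,2t,\dots$ gives $\mathbb{P}_k(T_m>jt)\le(1-\varepsilon)^j$, and hence $\sup_k\mathbb{E}_k[T_m]\le t/\varepsilon$. Finally, $\max_{k\le m}\mathbb{E}_k[T_0]<\infty$ follows from finitely many terms of the formula, since each $\mathbb{E}_{n+1}[T_n]\le\mathbb{E}_{n+1}[T_m]+\mathbb{E}_m[T_0]$. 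Combining these gives (iii).
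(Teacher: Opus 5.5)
The paper gives no proof of this lemma; it is quoted from \cite{Bansaye2013how}. Your route is the standard self-contained one. Its two engines are the passage-time identity $\sup_{k\ge m}\mathbb{E}_k[T_m]=\sum_{n\ge m}w_n\sum_{i>n}\pi_i$ and the Khasminskii induction $\sup\mathbb{E}[T^j]\le j!\,(\sup\mathbb{E}T)^j$. The paper itself uses exactly these later, in the proof of Proposition~\ref{proposition7.3}. The cycle (ii)$\Leftrightarrow$(iii)$\Rightarrow$(iv)$\Rightarrow$(i)$\Rightarrow$(iii) is sound. However, one step is wrong as written, and two others need tightening.

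\textbf{The last step of (i)$\Rightarrow$(iii) does not work.} For $n+1<m$, the process started at $n+1$ can be absorbed before climbing to $m$, so $\mathbb{E}_{n+1}[T_m]=\infty$ and your inequality is vacuous. It is also circular: it bounds the summands of $\mathbb{E}_m[T_0]=\sum_{n<m}\mathbb{E}_{n+1}[T_n]$ by something that contains $\mathbb{E}_m[T_0]$ itself. Use your formula instead. The geometric-trials bound gives $w_m\sum_{i>m}\pi_i=\mathbb{E}_{m+1}[T_m]\le t/\varepsilon$. Hence $\sum_i\pi_i<\infty$, and so each of the finitely many terms $w_n\sum_{i>n}\pi_i$ with $0\le n<m$ is finite. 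Since $\sup_{k\ge m}\mathbb{E}_k[T_m]$ is precisely the tail of the series in (ii) from $m$ on, this actually gives (i)$\Rightarrow$(ii) directly.

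\textbf{The limit in the passage-time formula.} The observation that $\mathbb{P}_{n+1}(T_N<T_n)\to0$ is not sufficient on its own, because that factor multiplies a quantity that grows with $N$. Solving your finite system exactly gives
\[
\mathbb{E}_{n+1}[T_n\wedge T_N]=w_n\sum_{i=n+1}^{N-1}\pi_i\,\frac{\sum_{j=i}^{N-1}w_j}{\sum_{j=n}^{N-1}w_j},
\]
with $w_0:=1$ when $n=0$. Each ratio increases to $1$ as $N\to\infty$ precisely because $\sum_j w_j=\infty$. Monotone convergence then yields $\mathbb{E}_{n+1}[T_n]=w_n\sum_{i>n}\pi_i$ in $[0,\infty]$, including the case $\sum_i\pi_i=\infty$. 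This settles what you called the main obstacle.

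\textbf{The definition of coming down from infinity.} You should state which definition you use. Your (iv)$\Rightarrow$(i) proves only the weak form, namely $\lim_m\lim_k\mathbb{P}_k(T_m<t)>0$ for some $t$, and that is all (i)$\Rightarrow$(iii) consumes. If the strong form ($\lim_m\lim_k\mathbb{P}_k(T_m<t)=1$ for every $t>0$) is intended, obtain it from (ii) via Markov's inequality:
\[
\sup_{k\ge m}\mathbb{P}_k(T_m\ge t)\le t^{-1}\sum_{n\ge m}w_n\sum_{i>n}\pi_i\to0 .
\]

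A minor point: $\sum_{i\ge2}\pi_i$ equals $\lambda_1\pi_1$ times the $n=1$ term of (ii). So ``dominated'' holds only up to a constant, which is harmless.
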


Moreover, Lemma~\ref{sum le M sum le CK} yields
\begin{equation}\label{QSDunique}
\sum_{n\geq1}
\frac1{\lambda_n\pi_n}
\left(
\sum_{i\geq n+1}\pi_i
\right)
<
\infty.
\end{equation}
Together with Lemma~\ref{Proposition 2.3} and
\cite[Theorem~3.2(ii)]{van1991quasi}, this condition provides the criterion
used later to establish the existence and uniqueness of the
quasi-stationary distribution.

Finally, assumption \textbf{(A3)} and the mean value theorem imply that
\begin{equation}\label{sup frac mu n+1 mu n}
\sup_{n\geq1}
\frac{\mu_{n+1}}{\mu_n}<\infty.
\end{equation}
It follows from \textbf{(A3)} that 
\begin{equation}\label{mun2pin}
\lim_{n\to\infty}\mu_n^2\pi_n=0.
\end{equation}

\subsection{Spectral framework and the zero-energy solution}
\label{subsec:spectral-zero-energy}

Let $\mathfrak{D}$ denote the space of finitely supported sequences
on $\mathbb{N}^*$, and define
$\widetilde{L}:\mathfrak{D}\to\mathfrak{D}$ by
\begin{equation}\label{tildeL}
    (\widetilde{L}u)_n
    =
    \lambda_nu_{n+1}
    +
    \mu_nu_{n-1}\mathbbm{1}_{\{n\geq2\}}
    -
    (\lambda_n+\mu_n)u_n.
\end{equation}
Consider the weighted Hilbert space
\begin{equation}\label{l2pi}
    \ell^2(\pi)
    :=
    \left\{
        u=(u_n)_{n\in\mathbb{N}^*}:
        \sum_{n=1}^{\infty}\pi_n|u_n|^2<\infty
    \right\},
\end{equation}
equipped with the inner product
\[
    \langle u,v\rangle_\pi
    :=
    \sum_{n=1}^{\infty}
    \pi_n\overline{u_n}v_n
\]
and the associated norm
\[
    \|u\|_\pi
    :=
    \langle u,u\rangle_\pi^{\frac12}.
\]

Denote by $L$ the closure of $\widetilde{L}$ in $\ell^2(\pi)$.
The next lemma gives the spectral realization used throughout the paper.
For completeness, we include a proof under the standing assumptions;
see also \cite[Theorem~3.1]{Chazottes2016}.

\begin{lemma}\label{operatorL}
The following assertions hold.
\begin{itemize}
    \item[{\rm(i)}]
    The operator $\widetilde{L}$ is symmetric on $\mathfrak{D}$
    and closable in $\ell^2(\pi)$;

    \item[{\rm(ii)}]
    Its closure $L$, with domain $\mathcal{D}(L)$, generates a
    positive contraction semigroup on $\ell^2(\pi)$; this semigroup
    agrees with the killed transition semigroup:
    \begin{equation}\label{eq:killed-semigroup-identification}
    (e^{tL}f)(n)=P_tf(n)
    =\mathbb{E}_n\!\left[f(X_t^K)\mathbbm{1}_{\{T_0>t\}}\right],
    \quad f\in\ell^2(\pi),\quad n\ge1;
    \end{equation}

    \item[{\rm(iii)}]
    The operator $L$ is self-adjoint and dissipative, with compact
    resolvent; consequently, its spectrum is discrete and can be
    ordered as
    \[
        -\rho_1>-\rho_2>-\rho_3>\cdots\to-\infty.
    \]
    In particular, the principal eigenvalue $-\rho_1$ is simple and
    strictly negative, and the corresponding eigenvector can be
    chosen strictly positive.
\end{itemize}
\end{lemma}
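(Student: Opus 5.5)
For (i), we use the detailed-balance identity \eqref{LambdaPiMu}. For $u,v\in\mathfrak{D}$, summation by parts gives
\[
\langle u,\widetilde{L}v\rangle_\pi
=-\sum_{n\ge1}\lambda_n\pi_n\,\overline{(u_{n+1}-u_n)}(v_{n+1}-v_n)-\mu_1\pi_1\overline{u_1}v_1 .
\]
Here $\mu_1\pi_1=1$ is the killing term at the boundary $n=1$. The right-hand side is Hermitian in $(u,v)$ and nonpositive when $u=v$. Hence $\widetilde{L}$ is symmetric and dissipative on the dense subspace $\mathfrak{D}$, and a densely defined symmetric operator is closable.

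For (iii) and (ii), we first show that $L$ is self-adjoint. It suffices to show essential self-adjointness of $\widetilde L$, i.e.\ that $\ker(\widetilde L^*-z)=\{0\}$ for some $z>0$.
\begin{itemize}
\item The adjoint $\widetilde L^*$ acts by the same three-term formula on its maximal domain.
\item A nonzero solution of $(\widetilde Lu)_n=zu_n$ is determined by $u_1$. Taking $u_1>0$, induction shows that $u$ is positive and strictly increasing.
\item Such a $u$ cannot lie in $\ell^2(\pi)$ unless $\sum\pi_n$ is itself compatible with growth. This is the Weyl limit-point criterion at $\infty$ for Jacobi operators.
\end{itemize}
Concretely, we use the divergence $\sum 1/(\lambda_n\pi_n)=\infty$ from \eqref{TwoSums} together with the entrance-type condition \eqref{QSDunique}. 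Following \cite[Theorem~3.1]{Chazottes2016}, one checks that an increasing solution of $Lu=zu$ grows fast enough that $\sum\pi_nu_n^2=\infty$. Alternatively, we note that the Friedrichs extension is the unique self-adjoint extension with quadratic form domain containing $\mathfrak D$. In either route $L=\overline{\widetilde L}$ is self-adjoint and, by the identity above, nonpositive. The semigroup $e^{tL}$ is then a contraction semigroup by the spectral theorem. Positivity follows from the Beurling--Deny criterion, since the Dirichlet form above is Markovian: it is decreased by normal contractions.

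To identify $e^{tL}$ with $P_t$, we use the truncated processes killed above level $N$, whose generators are finite matrices $L_N$. Three facts are needed:
\begin{itemize}
\item $L_N$ converges to $L$ in the strong resolvent sense, since $\mathfrak D$ is a core.
\item The truncated killed semigroups increase monotonically to $P_t$, because the process is non-explosive.
\item Strong resolvent convergence gives $e^{tL_N}f\to e^{tL}f$.
\end{itemize}
Together these yield \eqref{eq:killed-semigroup-identification}, first for $f\ge0$ finitely supported and then by density.

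For compactness of the resolvent, we show that $L^{-1}$ is Hilbert--Schmidt. The Green kernel of the killed chain is
\[
G(m,n)=\sum_{j=1}^{\min(m,n)}\frac{1}{\lambda_{j-1}\pi_{j-1}}
\]
(suitably indexed, with the $n=1$ term equal to $\mu_1\pi_1=1$). This is the minimum of the scale function $s$, so $-L^{-1}f(m)=\sum_n G(m,n)\pi_nf(n)$. Its Hilbert--Schmidt norm in $\ell^2(\pi)$ is $\sum_{m,n}\pi_m\pi_n s(\min(m,n))^2$. Bounding $s(\min)^2\le s(m)s(n)$ does not obviously suffice. Instead, we write
\[
\sum_{m,n}\pi_m\pi_n s(\min(m,n))^2\le 2\sum_m\pi_m s(m)^2\sum_{n\ge m}\pi_n
\]
and telescope $s(m)^2=\sum_{j\le m}(s(j)^2-s(j-1)^2)\le 2\sum_{j\le m}s(j)w_j$, where $w_j=1/(\lambda_j\pi_j)$. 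After exchanging sums, this reduces to finiteness of
\[
\sum_j w_j\, s(j)\sum_{p>j}\pi_p\sum_{m\ge p}\pi_m .
\]
This should follow from \eqref{QSDunique} together with $s(j)\sum_{m\ge j}\pi_m\to0$, the latter being a consequence of (A3) through the Laplace-type tail bounds behind \eqref{mun2pin}. Alternatively, we can use the weaker criterion that a monotone approximation by finite-rank operators converges in operator norm, controlled by $\sup_N$ of the tail quantity $M$ in \eqref{QSDunique}. This step is the main technical obstacle, and if the Hilbert--Schmidt bound is awkward we would fall back on showing $\|L^{-1}-L_N^{-1}\|\to0$ using the tail sum.

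Given compactness, the spectrum is discrete and consists of eigenvalues accumulating only at $-\infty$. Each eigenvalue is simple, because the eigen-recurrence is second order with a single boundary condition at $n=0$, so the eigenvector is determined by $u_1$. The top eigenvalue is strictly negative: by the form identity, $\langle u,Lu\rangle_\pi=0$ would force $u$ to be constant and $u_1=0$, hence $u=0$, and $0$ is not in the spectrum since $L^{-1}$ exists. Finally, because $e^{tL}$ is positivity improving (the chain is irreducible on $\mathbb N^*$ with $P_t(n,\{m\})>0$ for all $n,m$), the Perron--Frobenius/Krein--Rutman theorem for positivity-improving semigroups gives a strictly positive principal eigenvector.
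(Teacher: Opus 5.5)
You have a genuine gap at the step everything else rests on. You need $\widetilde L$ to be essentially self-adjoint, i.e.\ the closure $L$ itself must be self-adjoint with $\mathfrak D$ as a core. Your Beurling--Deny positivity argument, your strong-resolvent identification in (ii), and your Green-kernel description of $-L^{-1}$ all depend on it. The ingredients you name cannot deliver it. The conditions $\sum_n w_n=\infty$ and \eqref{QSDunique} only say that $\infty$ is an entrance boundary, and entrance boundaries can be limit-circle. For example, $\lambda_n=\mu_n=n^{\alpha}$ with $\alpha>3$ gives $\pi_n=n^{-\alpha}$, $w_n=1$ and scale function $s_n=n$. Both conditions hold, yet $\sum_n\pi_n s_n^2<\infty$. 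So, by Weyl's alternative, every solution of the recurrence lies in $\ell^2(\pi)$ for every $z$, and $\ker(\widetilde L^*-z)\neq\{0\}$.

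The Friedrichs alternative does not help. It produces a self-adjoint extension, not self-adjointness of the closure. Also, the uniqueness property you quote is misstated: every self-adjoint extension has form domain containing $\mathfrak D$.

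The missing input is the rate growth from \textbf{(A2)}--\textbf{(A3)}, in the form \eqref{mun2pin}. Take $z>0$ and $u_1>0$, and set $J_n:=\lambda_n\pi_n(u_{n+1}-u_n)$. Then $J_1=(1+z\pi_1)u_1>0$ and $J_n=J_{n-1}+z\pi_nu_n$. Hence $u_n\ge J_1w_{n-1}=J_1/(\mu_n\pi_n)$, so $\pi_nu_n^2\ge J_1^2/(\mu_n^2\pi_n)\to\infty$. This is exactly the mechanism of Lemma~\ref{u 0 notin l2}, and with it your limit-point argument closes.

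The compactness step you flagged as the main obstacle is actually harmless. Set $w_0:=1$, $s(j)=\sum_{i<j}w_i$ and $T_j=\sum_{n\ge j}\pi_n$. Then $s(j)T_j\le\sum_{i<j}w_iT_{i+1}\le M'$, where $M':=\sum_{i\ge0}w_iT_{i+1}<\infty$ by \eqref{TwoSums} and \eqref{QSDunique}. Combine this with the correctly indexed telescoping $s(m)^2\le 2\sum_{j\le m}w_{j-1}s(j)$ (note $w_{j-1}$, not $w_j$). This gives $\|G\|_{\mathrm{HS}}^2\le 4M'^2$ for your Green operator $G$. So the entrance condition alone gives compactness, once (ii) identifies $G$ with $-L^{-1}$. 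You also only assert non-explosion; it follows from Reuter's criterion, since $\sum_n w_n\sum_{i\le n}\pi_i\ge\pi_1\sum_n w_n=\infty$.

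By comparison, the paper conjugates $-\widetilde L$ to $D-A$ on $\ell^2(\mathbb N^*)$ and applies Kato--Rellich with relative bound zero, using $\lambda_n/\mu_n\to0$ and $\sup_n\mu_{n+1}/\mu_n<\infty$. This yields in one stroke self-adjointness of the closure, the core property, compact resolvent, and the explicit domain \eqref{eq:generator-domain}; that domain is used later in Lemma~\ref{lem:coarse-spectral-separation}. The paper then identifies the semigroup via Dynkin's formula and maximal dissipativity, and gets the positive ground state from the $|u|$ energy argument. Your repaired route is more probabilistic and separates the roles of the hypotheses more clearly, but it does not produce that domain characterization.
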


\begin{proof}
Fix $K\ge K_0$. The unitary map
$U:\ell^2(\pi)\to\ell^2(\mathbb{N}^*)$, $(Uu)_n=\sqrt{\pi_n}u_n$,
conjugates $-\widetilde{L}$ on finitely supported sequences to $D-A$,
where
\[
(Dv)_n=b_nv_n,\quad (Av)_n=a_nv_{n+1}+a_{n-1}v_{n-1},
\]
with $b_n=\lambda_n+\mu_n$, $a_n=\sqrt{\lambda_n\mu_{n+1}}$,
and $a_0=v_0=0$. Assumptions \textbf{(A2)}--\textbf{(A3)} give
$b_n\to\infty$, $\lambda_n/\mu_n\to0$, and
$\sup\limits_n\mu_{n+1}/\mu_n<\infty$. This yields
\[
\frac{a_n}{b_n}\to0,
\quad \frac{a_n}{b_{n+1}}\to0.
\]
Consequently, for every $\varepsilon>0$, there exists
$C_\varepsilon>0$ such that
\[
\|Av\|_2\leq \varepsilon\|Dv\|_2+C_\varepsilon\|v\|_2,
\quad v\in\mathcal D(D).
\]
The operator $A$ is relatively bounded with respect to $D$ with relative
bound zero. Moreover,
\[
\bigl(A(D+i)^{-1}v\bigr)_n
=
\frac{a_n}{b_{n+1}+i}v_{n+1}
+
\frac{a_{n-1}}{b_{n-1}+i}v_{n-1},
\]
which implies that $A(D+i)^{-1}$ is the sum of two weighted shifts.
Since
\[
\frac{a_n}{b_{n+1}+i}\to0,
\quad
\frac{a_{n-1}}{b_{n-1}+i}\to0,
\]
both weighted shifts are compact on $\ell^2(\mathbb N^*)$. Hence,  $A(D+i)^{-1}$ is compact. The Kato--Rellich theorem
(see \cite{Kato1966perturbation}) implies that $D-A$ is self-adjoint on
$\mathcal{D}(D)$ and that finitely supported sequences form a core.
Since $(D+i)^{-1}$ is compact, the resolvent identity gives compact
resolvent for $D-A$. These conclusions transfer to $-L$ under $U$.
In particular,
\begin{equation}\label{eq:generator-domain}
\mathcal{D}(L)=\left\{u\in\ell^2(\pi):
\sum_{n\ge1}\pi_n(\lambda_n+\mu_n)^2|u_n|^2<\infty\right\}.
\end{equation}

For finitely supported $u$, detailed balance gives
\[
\begin{aligned}
	\langle u,-\widetilde L u\rangle_\pi
	&=
	\mu_1\pi_1|u_1|^2
	+\sum_{n\ge1}\lambda_n\pi_n
	|u_{n+1}-u_n|^2=
	|u_1|^2
	+\sum_{n\ge1}\lambda_n\pi_n
	|u_{n+1}-u_n|^2,
\end{aligned}
\]
where we used $\mu_1\pi_1=1$.
The closure of this nonnegative quadratic form is a Dirichlet form:
normal contractions decrease each term. Thus $L$ generates a positive
contraction semigroup. A vector of zero energy must vanish at $1$
and have all successive differences equal to zero; it is therefore
the zero vector. Compact resolvent then implies $\rho_1>0$.
A ground-state eigenvector may be chosen nonnegative, since replacing
it by its absolute value does not increase the energy. The eigenvalue
recurrence and the positivity of all rates show that this eigenvector
is strictly positive. Finally, the boundary equation at $1$ and the
second-order recurrence determine any eigenvector from its first
coordinate. If that coordinate is zero, the entire vector vanishes.
Every eigenspace is consequently one-dimensional, which proves the
strict ordering of all eigenvalues.

It remains to identify the semigroup with the killed process.
First, the process is nonexplosive. Indeed, for fixed $K$ the drift
of $W(n)=n$, with $W(0)=0$, satisfies
\[
\lambda_n-\mu_n\le C_K,\quad n\ge1,
\]
for a finite $C_K\ge0$, because it is negative above $Kx_2$.
For an integer $R>n$, Dynkin's formula stopped at $t\wedge T_R$
therefore gives
\[
R\,\mathbb{P}_n(T_R\le t)
\le\mathbb{E}_n X_{t\wedge T_R}^K\le n+C_Kt.
\]
An explosion would require leaving every finite set, since all rates
are finite on a finite set. Letting $R\to\infty$ excludes explosion.

Kill the process also upon reaching $R+1$, and denote the resulting
finite transition kernel on $\{1,\ldots,R\}$ by $P_t^{(R)}$.
Its generator satisfies detailed balance with respect to $\pi$,
so $\pi_nP_t^{(R)}(n,\{j\})=\pi_jP_t^{(R)}(j,\{n\})$.
Nonexplosion and the pathwise truncation imply convergence to
$P_t(n,\{j\})$ as $R\to\infty$. Thus $P_t$ is reversible and
sub-Markovian. Jensen's inequality and reversibility give
\[
\|P_tf\|_\pi^2
\le\sum_{n,j\ge1}\pi_nP_t(n,\{j\})|f_j|^2
\le\|f\|_\pi^2.
\]
For finitely supported $f$, coordinatewise continuity at $t=0$ gives
$\langle P_tf,f\rangle_\pi\to\|f\|_\pi^2$. The contraction bound
then implies $\|P_tf-f\|_\pi\to0$. Density extends this conclusion
to every $f\in\ell^2(\pi)$, so $P_t$ is a strongly continuous
contraction semigroup.

For $f\in\mathfrak{D}$, both $f$ and $\widetilde{L}f$ are bounded
and finitely supported. Dynkin's formula, with $f(0)=0$, yields
\[
P_tf-f=\int_0^t P_s\widetilde{L}f\,ds.
\]
The integral also converges in $\ell^2(\pi)$ by strong continuity.
Consequently, the generator of $P_t$ extends $\widetilde{L}$ and
hence its closure $L$. Since $L$ is maximal dissipative, this
extension equals $L$, proving
\eqref{eq:killed-semigroup-identification}. The expectation formula
for arbitrary $f\in\ell^2(\pi)$ follows by truncation; its absolute
integrability follows from the preceding Jensen bound.
\end{proof}

Recall that the sequence $u^0=(u_n^0)_{n\in\mathbb{N}^*}$ is defined in
\eqref{u_n^0}. Since $u_{n_1(K)}^0=1$, for every
$n\geq n_1(K)$,
\[
    u_n^0
    =
    1
    +
    u_1^0
    \sum_{j=n_1(K)}^{n-1}
    \frac{1}{\lambda_j\pi_j}.
\]
Using the detailed-balance identity \eqref{LambdaPiMu}, one readily
verifies that $u^0$ is a normalized zero-energy solution of the
recurrence
\begin{equation}\label{homogeneous equation}
    \lambda_nu_{n+1}^0
    +
    \mu_nu_{n-1}^0\mathbbm{1}_{\{n\geq2\}}
    -
    (\lambda_n+\mu_n)u_n^0
    =
    0,
    \quad n\in\mathbb{N}^*.
\end{equation}
The empty-sum convention is as specified in Section~\ref{subsec:notation}.

The zero-energy solution also identifies the mixing coefficient in
Theorem~\ref{theorem of TV}. Set $N=n_2(K)$ and
$q_n=u_n^0/u_N^0$ for $0\le n\le N$. By
\eqref{homogeneous equation},
\[
Lq(n)=0\quad(1\le n<N),\quad q_0=0,\quad q_N=1,
\]
where $Lq$ denotes the birth--death expression on this finite interval.
The function $n\mapsto\mathbb{P}_n(T_N<T_0)$ solves the same finite
Dirichlet problem by first-step conditioning. The maximum principle
gives uniqueness: a harmonic function with zero boundary values
cannot have a nonzero interior maximum or minimum. Hence
\begin{equation}\label{eq:alpha-hitting}
\alpha_n(K)=\frac{u_n^0}{u_N^0}
=\mathbb{P}_n(T_N<T_0),\quad 1\le n\le N.
\end{equation}
For $n>N$, almost-sure absorption and nearest-neighbour jumps imply
$\mathbb{P}_n(T_N<T_0)=1$, consistently with the definition
$\alpha_n(K)=1$ above $N$.

\section{Principal eigenvalue and the associated eigenvector}\label{section of principal eigenvalue}

In this section, we prove Theorem~\ref{theorem of principal eigenvalue and eigenfunction} by establishing sharp asymptotic estimates for the principal eigenvalue and the corresponding positive eigenvector of the killed generator $L$. Our strategy is to analyze the associated second-order recurrence in different population regions and then match the resulting solutions near the positive stable equilibrium $n_2(K)$. More precisely, we first construct suitable perturbations of the zero-energy solution $u^0$ on the intervals below $n_2(K)$, and then obtain a solution with the prescribed behavior at infinity. Matching these solutions near $n_2(K)$ determines the principal eigenvalue and, at the same time, yields uniform estimates for the associated eigenvector.

We begin with a basic property of the zero-energy solution $u^0$.
\begin{lemma}\label{u 0 notin l2}
	Let $u^0=(u^0_n)$ be as defined in \eqref{u_n^0}. Then $u^0\notin\ell^2(\pi)$.
\end{lemma}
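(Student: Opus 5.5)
The plan is to show that the terms of the series $\sum_{n\ge1}\pi_n(u_n^0)^2$ do not even tend to zero, and in fact diverge to $+\infty$. This requires a lower bound on $u_n^0$ in terms of the weights $\pi_n$, which we then combine with the tail property \eqref{mun2pin}.

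First we note that $u_1^0>0$, because it is the reciprocal of a sum of positive terms. By \eqref{u_n^0}, $u_n^0$ is $u_1^0$ times one plus a sum of nonnegative terms. Keeping only the last summand, with $j=n-1$, gives for every $n\ge2$
\[
u_n^0\;\ge\;\frac{u_1^0}{\lambda_{n-1}\pi_{n-1}}\;=\;\frac{u_1^0}{\mu_n\pi_n},
\]
where the equality is the detailed-balance identity \eqref{LambdaPiMu}.

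Squaring and multiplying by $\pi_n$ then yields
\[
\pi_n(u_n^0)^2\;\ge\;\frac{(u_1^0)^2}{\mu_n^2\pi_n},\quad n\ge2.
\]
By \eqref{mun2pin} we have $\mu_n^2\pi_n\to0$ as $n\to\infty$, so the right-hand side tends to $+\infty$. Hence the general term of $\sum_n\pi_n(u_n^0)^2$ does not tend to zero, the series diverges, and $u^0\notin\ell^2(\pi)$.

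The only step that needs checking is \eqref{mun2pin}, which is used rather than proved here. If it required justification, we would argue as follows. The ratio $\mu_{n+1}^2\pi_{n+1}/(\mu_n^2\pi_n)=(\mu_{n+1}/\mu_n)\,(\lambda_n/\mu_n)$ tends to $0$. This follows from \eqref{sup frac mu n+1 mu n} together with $\widetilde\lambda/\widetilde\mu\to0$ from \textbf{(A3)}. Consequently $\mu_n^2\pi_n$ eventually decays geometrically.
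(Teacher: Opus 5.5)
Your proposal is correct and follows essentially the same route as the paper. Both arguments use the bound $u_n^0\ge u_1^0/(\mu_n\pi_n)$ obtained from detailed balance and the fact that $\mu_n^2\pi_n\to0$ from \eqref{mun2pin} to force $\sum_n\pi_n(u_n^0)^2=\infty$. Your ratio argument via $(\mu_{n+1}/\mu_n)(\lambda_n/\mu_n)\to0$ also supplies a justification of \eqref{mun2pin}, which the paper only asserts as a consequence of \textbf{(A3)}.
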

	\begin{proof}
		In terms of \eqref{LambdaPiMu}, it follows that
		\[
		u_n^0\ge\frac{u^0_1}{\mu_n\pi_n},\quad n\ge1,
		\]
		and hence,
		\[
		\sum_{n=1}^{N}(u_n^0)^2\pi_n\ge\sum_{n=1}^{N}\frac{(u_1^0)^2}{\mu^2_{n}\pi_{n}}.
		\]
By equation \eqref{mun2pin},
\[
\frac{1}{\mu_n^2\pi_n}\to\infty.
\]
Consequently,
\[
\|u^0\|_\pi^2
=
\sum_{n\ge1}(u_n^0)^2\pi_n
\geq
(u_1^0)^2
\sum_{n\ge1}\frac{1}{\mu_n^2\pi_n}
=\infty.
\]
Thus $u^0\notin\ell^2(\pi)$.
	\end{proof}

For sufficiently small $\rho$, we seek a sequence
$u=(u_n)_{n\in\mathbb{N}^*}$, depending on $\rho$, that solves
\begin{equation}\label{nonhomogeneous equation}
\lambda_n u_{n+1}+\mu_n u_{n-1}\mathbbm{1}_{\{n\ge2\}}-(\lambda_n+\mu_n)u_n=-\rho u_n.
\end{equation}
At $\rho=0$, the sequence $u^0$ solves this recurrence at every
positive state, while the constant sequence solves it for $n\ge2$.
For small nonzero $\rho$, we construct a single perturbation of $u^0$
on $1\le n\le n_2(K)$, normalized at $n_1(K)$, and a nearly constant
solution on $n\ge n_2(K)-1$. Both constructions use bounded operators
and convergent Neumann series. Matching the two solutions at
$n_2(K)-1$ and $n_2(K)$ gives a global solution and determines the
principal eigenvalue. We then verify membership in $\mathcal{D}(L)$.

\subsection{Piecewise construction of the sequence}
Define $m_K:=\lfloor\sqrt{K}\ln K\rfloor$. Next we derive a uniform estimate for $u^0_{n}$ for $n\in[n_1(K)+m_K+1,n_2(K)]$, where $u^0_n$ is defined in \eqref{u_n^0}. 

\begin{proposition}\label{un2K=2+O}
For all sufficiently large $K$ and $n\in[n_1(K)+m_K+1,n_2(K)]$,
\begin{equation}\label{u_n2K=2-O}
	u^0_{n}=2+O\left(\frac{(\ln K)^3}{\sqrt{K}}\right),
\end{equation}
where $u^0_n$ is defined in \eqref{u_n^0}. In particular,
\[
u^0_{n_2(K)}
=
2+O\left(\frac{(\ln K)^3}{\sqrt{K}}\right).
\]
\end{proposition}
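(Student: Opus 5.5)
We write $w_j=1/(\lambda_j\pi_j)$ and $N_1=n_1(K)$. The normalization $u^0_{N_1}=1$ gives
\[
u^0_n=\frac{\sum_{j=0}^{n-1}w_j}{\sum_{j=0}^{N_1-1}w_j},
\]
with the convention $w_0:=1$ coming from the leading ``$1+$'' in \eqref{u_n^0}. So the claim $u^0_n=2+O((\ln K)^3/\sqrt K)$ reduces to comparing two discrete Laplace sums:
\[
\Sigma_-:=\sum_{j=0}^{N_1-1}w_j
\quad\text{and}\quad
\Sigma_-+\Sigma_+^{(n)},\qquad
\Sigma_+^{(n)}:=\sum_{j=N_1}^{n-1}w_j .
\]
We must show that $\Sigma_+^{(n)}/\Sigma_- = 1+O((\ln K)^3/\sqrt K)$ uniformly for $n\in[N_1+m_K+1,n_2(K)]$. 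The guiding heuristic is that $w_j$ behaves like $\exp(K(H(j/K)-H(0)))$ up to a smooth prefactor. This function is peaked at $x_1$ with a Gaussian profile of width $\sqrt K$, so each of $\Sigma_\pm$ captures essentially half of the Gaussian mass.

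\textbf{Step 1 (local form of $w_j$).} From the product formula and detailed balance, $\ln w_j=-\ln(\lambda_j\pi_j)=\sum_{i=1}^{j}\ln(\mu_i/\lambda_i)+O(1)$-type terms. We will write
\[
\ln w_{j+1}-\ln w_j=\ln\frac{\mu_{j+1}}{\lambda_{j+1}}=h\Bigl(\frac{j+1}{K}\Bigr).
\]
For $|j-N_1|\le m_K$ we Taylor expand $h$ around $x_1$, using $h(x_1)=0$, $h'(x_1)=H''(x_1)<0$, and a second-derivative bound from \textbf{(H)}. Summing the increments from $N_1$ then gives
\[
\ln\frac{w_j}{w_{N_1}}=\frac{H''(x_1)}{2K}(j-N_1)^2+O\Bigl(\frac{|j-N_1|}{K}+\frac{|j-N_1|^3}{K^2}+\frac{|j-N_1|\,\epsilon_1}{K}\Bigr).
\]
On the window $|j-N_1|\le m_K=\lfloor\sqrt K\ln K\rfloor$ this error is $O((\ln K)^3/\sqrt K)$; the cubic term contributes $(\sqrt K\ln K)^3/K^2$, which is exactly this order. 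The key structural point is the symmetry: the linear error terms in $j-N_1$ cancel to leading order between the left and right halves, or are in any case at most $O(m_K/K)$, which is smaller.

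\textbf{Step 2 (core sums).} Summing the expansion of Step 1 over $j\in[N_1-m_K,N_1-1]$ and over $j\in[N_1,N_1+m_K]$, we compare each half to $\tfrac12\sqrt{2\pi K/(-H''(x_1))}\,w_{N_1}$. The passage from sum to integral costs $O(1)\cdot w_{N_1}$, which is $O(1/\sqrt K)$ in relative terms. The Gaussian tails beyond $m_K$ are at most $e^{-c(\ln K)^2}$, which is negligible. Hence both halves equal $\tfrac12\sqrt{2\pi K/|H''(x_1)|}\,w_{N_1}\,(1+O((\ln K)^3/\sqrt K))$.

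\textbf{Step 3 (tails).} We must show that the remaining terms are negligible in relative size:
\begin{itemize}
\item[(a)] the terms $0\le j<N_1-m_K$, including $w_0=1$;
\item[(b)] the terms $N_1+m_K<j\le n-1\le n_2(K)-1$.
\end{itemize}
For (b), we use that $h<0$ on $(x_1,x_2)$, so $\ln w_j$ is decreasing there. In the range $j\ge N_1+m_K$ each term is therefore at most $w_{N_1+m_K}\le w_{N_1}e^{-c(\ln K)^2}$, and there are at most $K$ of them. For (a), we use that $h>0$ on $(0,x_1)$, so $w_j$ is increasing on $[1,N_1]$. This uses the monotonicity of $\ln(\widetilde\mu/\widetilde\lambda)$ from \textbf{(A2)} together with the small-$j$ discrepancy between $\mu_i/\lambda_i$ and $h(i/K)$; that discrepancy is controlled because $\lambda_i/\mu_i$ depends only on $i/K$ (the factors of $n$ cancel), except for the boundary factor $\mu_1\pi_1$. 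The same bound $K w_{N_1}e^{-c(\ln K)^2}$ then holds, and $w_0=1\le w_1$ is handled similarly. These tails also make the uniformity in $n$ automatic, since $\Sigma_+^{(n)}$ varies by a negligible amount for $n$ beyond $N_1+m_K$.

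\textbf{Conclusion.} Combining Steps 2 and 3 gives $\Sigma_+^{(n)}/\Sigma_-=1+O((\ln K)^3/\sqrt K)$, hence $u^0_n=1+\Sigma_+^{(n)}/\Sigma_-=2+O((\ln K)^3/\sqrt K)$.

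The main obstacle is Step 1: keeping the error $O((\ln K)^3/\sqrt K)$ uniform while tracking the rounding $\epsilon_1$ and the exact indexing between $\pi_j$ and $h$. We will handle this with a discrete Taylor/Euler--Maclaurin estimate on $\sum_i h(i/K)$, using $\sup|H'''|<\infty$ from \textbf{(H)}.
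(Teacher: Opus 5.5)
Your proposal is correct and follows essentially the same route as the paper. The paper also splits $\sum_j w_j$ at $n_1(K)$, evaluates both halves as half-Gaussian sums equal to $w_{n_1(K)}\bigl(\sqrt{\pi K}/\sqrt{-2H''(x_1)}+O((\ln K)^3)\bigr)$ by Taylor-expanding $h$ at $x_1$ on the window $|j-n_1(K)|\le m_K$, and discards the remaining terms using monotonicity of $w_j$ on each side of the threshold. Two small points do not affect correctness. You absorb the leading $1$ through $w_0:=1\le w_1$, whereas the paper uses the exponential growth of $w_{n_1(K)}$; both work. Your concern about a small-$j$ discrepancy is unnecessary, because $\mu_i/\lambda_i=e^{h(i/K)}$ holds exactly.
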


\begin{proof}
Recall the reciprocal conductances $w_n$ from Section~\ref{subsec:notation}.
Their product representation is
\begin{equation}\label{w_n=}
	w_n:=\frac{1}{\lambda_n\pi_n}
	=\prod_{i=1}^{n}\frac{\mu_i}{\lambda_i},
\end{equation}
We use the rounding errors $\epsilon_i(K)$ specified in
Section~\ref{subsec:notation}. The proof consists of the following
two estimates.

\textbf{Claim 1:}
For all sufficiently large $K$ and
$n\in[n_1(K)+m_K,n_2(K)-1]$,
\begin{equation}\label{un sum n1K n_2K}
	\sum_{j=n_1(K)}^{n}w_j
	=
	w_{n_1(K)}
	\left(
	\frac{\sqrt{\pi K}}{\sqrt{-2h'(x_1)}}
	+O\left((\ln K)^3\right)
	\right).
\end{equation}
\textbf{Proof of Claim 1:}
Recall that $h(x_1)=0$, $h'(x_1)<0$.
Then for each $j\in\{0,1,\ldots,m_K\}$,
\[
\begin{split}
\ln\frac{w_{n_1(K)+j}}{w_{n_1(K)}}
&=
\ln\prod_{i=n_1(K)+1}^{n_1(K)+j}
\frac{\widetilde{\mu}(\frac{i}{K})}
     {\widetilde{\lambda}(\frac{i}{K})}=
\sum_{i=1}^{j}
h\left(x_1+\frac{i-\epsilon_1(K)}{K}\right)                                      \\
&=
h'(x_1)\sum_{i=1}^{j}\frac{i-\epsilon_1(K)}{K}
+\frac{h''(x_1)}{2}
 \sum_{i=1}^{j}\frac{(i-\epsilon_1(K))^2}{K^2}+o\left(
\sum_{i=1}^{j}\frac{(i-\epsilon_1(K))^2}{K^2}
\right)=:I_1+I_2+I_3,
\end{split}
\]
where the last estimate is uniform for $0\leq j\leq m_K$.
Indeed, for $0\leq r<x_1$, let
\[
\omega_{h''}(r):=
\sup_{|x-x_1|\leq r}|h''(x)-h''(x_1)|,
\]
the continuity of $h''$ implies $\omega_{h''}(r)\to0$ as $r\downarrow0$.
Taylor's formula therefore gives, for sufficiently large $K$,
\[
|I_3|
\leq \frac12\omega_{h''}\left(\frac{m_K+1}{K}\right)
       \sum_{i=1}^{j}\frac{(i-\epsilon_1(K))^2}{K^2}
\leq O(1)\omega_{h''}\left(\frac{m_K+1}{K}\right)\frac{j^3}{K^2}.
\]
The same argument controls the expansion to the left of $n_1(K)$
used in \textbf{Claim 2} below.
Next, we estimate the three terms respectively. Note that
\[
\begin{split}
I_1
&=
h'(x_1)\sum_{i=1}^{j}\frac{i-\epsilon_1(K)}{K}=
\frac{h'(x_1)j(j+1)}{2K}
-\frac{h'(x_1)j}{K}\epsilon_1(K)                                                  \\
&=
\frac{h'(x_1)}{2K}j^2
+\frac{h'(x_1)}{2K}j
-\frac{h'(x_1)\epsilon_1(K)}{K}j =
\frac{h'(x_1)}{2K}j^2
+\frac{h'(x_1)j}{2K}
 \left(1-2\epsilon_1(K)\right),
\end{split}
\]
and
\[
\begin{split}
I_2
&=
h''(x_1)\sum_{i=1}^{j}
\frac{(i-\epsilon_1(K))^2}{2K^2}=
\frac{h''(x_1)}{2K^2}\sum_{i=1}^{j}i^2
-\frac{h''(x_1)\epsilon_1(K)}{K^2}\sum_{i=1}^{j}i
+\frac{h''(x_1)}{2K^2}
 \sum_{i=1}^{j}(\epsilon_1(K))^2                                                  \\
&=
\frac{h''(x_1)}{12K^2}j(j+1)(2j+1)
-\frac{h''(x_1)\epsilon_1(K)}{2K^2}j(j+1)
+\frac{h''(x_1)(\epsilon_1(K))^2}{2K^2}j                                         \\
&=
\frac{h''(x_1)j^3}{6K^2}
+\frac{h''(x_1)}{K^2}
\left[
\left(\frac14-\frac{\epsilon_1(K)}2\right)j^2
+
\left(
\frac1{12}-\frac{\epsilon_1(K)}2
+\frac{(\epsilon_1(K))^2}{2}
\right)j
\right].
\end{split}
\]
Moreover,
\[
I_3=o\left(\frac{j^3}{K^2}\right)
\]
uniformly for $0\leq j\leq m_K$. Then
\[
\begin{split}
\ln\frac{w_{n_1(K)+j}}{w_{n_1(K)}}
={}&
\frac{h'(x_1)}{2K}j^2
+\frac{h'(x_1)j}{2K}
 \left(1-2\epsilon_1(K)\right)
+\frac{h''(x_1)j^3}{6K^2}                                                        \\
&+
\frac{h''(x_1)}{K^2}
\left[
\left(\frac14-\frac{\epsilon_1(K)}2\right)j^2
+
\left(
\frac1{12}-\frac{\epsilon_1(K)}2
+\frac{(\epsilon_1(K))^2}{2}
\right)j
\right]
+o\left(\frac{j^3}{K^2}\right).
\end{split}
\]
Recall that $m_K=\lfloor\sqrt{K}\ln K\rfloor$, thus
\[
w_{n_1(K)+m_K}
=
w_{n_1(K)}
\exp\left(
\frac{h'(x_1)}{2}(\ln K)^2
+O\left(\frac{(\ln K)^3}{\sqrt{K}}\right)
\right).
\]
Moreover, by \textbf{(A2)},
\[
\frac{w_{n+1}}{w_n}
=
\frac{\mu_{n+1}}{\lambda_{n+1}}
=
\frac{\widetilde\mu((n+1)/K)}
{\widetilde\lambda((n+1)/K)}
<1,\quad
x_1<\frac{n+1}{K}<x_2.
\]
Hence $(w_n)$ is decreasing between $n_1(K)$ and $n_2(K)$, with at most an off-by-one error at the endpoints due to integer rounding. For all sufficiently large $K$ and $m_K<j\le n_2(K)-n_1(K)$,
\[
w_{n_1(K)+j}\leq w_{n_1(K)+m_K}.
\]
Then, for
$n\in[m_K+1,n_2(K)-n_1(K)-1]$ and constant $0<c'<-h'(x_1)$,
\begin{equation}\label{sum n1K+m+1 n2K-1 wj}
\begin{split}
\sum_{j=m_K+1}^{n}w_{n_1(K)+j}
&\leq
\left(n_2(K)-n_1(K)\right)w_{n_1(K)+m_K}\leq
\left(n_2(K)-n_1(K)\right)w_{n_1(K)}
\exp\left(-\frac{c'(\ln K)^2}{2}\right).
\end{split}
\end{equation}
For $0\leq j\leq\lfloor\sqrt K\rfloor$,
\[
\begin{split}
w_{n_1(K)+j}
&=
w_{n_1(K)}
\exp\left(
\frac{h'(x_1)}{2K}j^2
+O\left(\frac{j}{K}\right)
\right)                                                                          
=
w_{n_1(K)}
\exp\left(\frac{h'(x_1)}{2K}j^2\right)
\left(1+O\left(\frac{j}{K}\right)\right),
\end{split}
\]
whereas, for $\lfloor\sqrt K\rfloor\leq j\leq m_K$,
\[
\begin{split}
w_{n_1(K)+j}
&=
w_{n_1(K)}
\exp\left(
\frac{h'(x_1)}{2K}j^2
+O\left(\frac{j^3}{K^2}\right)
\right)                                                                          
=
w_{n_1(K)}
\exp\left(\frac{h'(x_1)}{2K}j^2\right)
\left(1+O\left(\frac{j^3}{K^2}\right)\right).
\end{split}
\]

We next estimate $\sum\limits_{j=0}^{m_K}w_{n_1(K)+j}$. The preceding identities reduce the problem to estimating
\[
\sum_{j=0}^{m_K}
\exp\left(\frac{h'(x_1)}{2K}j^2\right).
\]
Note that
\[
\begin{split}
\sqrt{K}\int_{0}^{\frac{m_K-1}{\sqrt{K}}}
\exp\left(\frac{h'(x_1)}{2}x^2\right)\,dx
&\leq
\sqrt{K}\sum_{j=0}^{m_K}\frac{1}{\sqrt{K}}
\exp\left(\frac{h'(x_1)j^2}{2K}\right)                                           \\
&\leq
1+\sqrt{K}\int_0^{\frac{m_K}{\sqrt{K}}}
\exp\left(\frac{h'(x_1)}{2}x^2\right)\,dx .
\end{split}
\]
For the left side of the inequality,
\[
\begin{split}
\int_{0}^{\frac{m_K-1}{\sqrt{K}}}
\exp\left(\frac{h'(x_1)}{2}x^2\right)\,dx
&=
\int_0^{\infty}
\exp\left(\frac{h'(x_1)}{2}x^2\right)\,dx-
\int^{\infty}_{\frac{m_K-1}{\sqrt{K}}}
\exp\left(\frac{h'(x_1)}{2}x^2\right)\,dx                                       \\
&=
\frac{\sqrt{\pi}}{\sqrt{-2h'(x_1)}}
-o\left(\frac{1}{\sqrt{K}}\right).
\end{split}
\]
For the right side of the inequality,
\[
\begin{split}
\int_0^{\frac{m_K}{\sqrt{K}}}
\exp\left(\frac{h'(x_1)}{2}x^2\right)\,dx
&=
\int_0^{\infty}
\exp\left(\frac{h'(x_1)}{2}x^2\right)\,dx-
\int^{\infty}_{\frac{m_K}{\sqrt{K}}}
\exp\left(\frac{h'(x_1)}{2}x^2\right)\,dx                                       \\
&=
\frac{\sqrt{\pi}}{\sqrt{-2h'(x_1)}}
-o\left(\frac{1}{\sqrt{K}}\right).
\end{split}
\]
Then
\[
\begin{split}
\sum_{j=0}^{m_K}
\exp\left(\frac{h'(x_1)}{2K}j^2\right)
&=
\sqrt{K}\sum_{j=0}^{m_K}\frac{1}{\sqrt{K}}
\exp\left(\frac{h'(x_1)j^2}{2K}\right)=
\frac{\sqrt{\pi K}}{\sqrt{-2h'(x_1)}}
+O(1).
\end{split}
\]
Thus,
\[
\begin{split}
\sum_{j=0}^{m_K}w_{n_1(K)+j}
&=
\sum_{j=0}^{\lfloor\sqrt{K}\rfloor}w_{n_1(K)+j}
+\sum_{j=\lfloor\sqrt{K}\rfloor+1}^{m_K}w_{n_1(K)+j}                             \\
&=
w_{n_1(K)}
\sum_{j=0}^{\lfloor\sqrt{K}\rfloor}
\exp\left(\frac{h'(x_1)}{2K}j^2\right)
\left(1+O\left(\frac{j}{K}\right)\right)                              \\
&\quad+
w_{n_1(K)}
\sum_{j=\lfloor\sqrt{K}\rfloor+1}^{m_K}
\exp\left(\frac{h'(x_1)}{2K}j^2\right)
\left(1+O\left(\frac{j^3}{K^2}\right)\right)                          \\
&=
w_{n_1(K)}
\sum_{j=0}^{m_K}
\exp\left(\frac{h'(x_1)}{2K}j^2\right)                                          \\
&\quad+
w_{n_1(K)}
\sum_{j=0}^{\lfloor\sqrt{K}\rfloor}
\exp\left(\frac{h'(x_1)}{2K}j^2\right)
O\left(\frac{j}{K}\right)                                             \\
&\quad+
w_{n_1(K)}
\sum_{j=\lfloor\sqrt{K}\rfloor+1}^{m_K}
\exp\left(\frac{h'(x_1)}{2K}j^2\right)
O\left(\frac{j^3}{K^2}\right).
\end{split}
\]
Note that the order of
\[
\max\left\{
\frac{j}{K},\frac{j^3}{K^2}
\right\}
\]
is at most $\frac{(\ln K)^3}{\sqrt{K}}$ for
$j\in\{0,1,\ldots,m_K\}$. Then
\begin{equation}\label{sum n1K n1K+m}
\begin{split}
\sum_{j=0}^{m_K}w_{n_1(K)+j}
&=
w_{n_1(K)}
\left(
\frac{\sqrt{\pi K}}{\sqrt{-2h'(x_1)}}
+O(1)
\right)
\left(
1+O\left(\frac{(\ln K)^3}{\sqrt{K}}\right)
\right)                                                                          \\
&=
w_{n_1(K)}
\left(
\frac{\sqrt{\pi K}}{\sqrt{-2h'(x_1)}}
+O\left((\ln K)^3\right)
\right).
\end{split}
\end{equation}
It follows from \eqref{sum n1K+m+1 n2K-1 wj} and
\eqref{sum n1K n1K+m} that, for all sufficiently large $K$ and
$n\in[n_1(K)+m_K,n_2(K)-1]$,
\[
\begin{split}
\sum_{j=n_1(K)}^{n}w_j
&=
\sum_{j=0}^{m_K}w_{n_1(K)+j}
+\sum_{j=m_K+1}^{n-n_1(K)}w_{n_1(K)+j} =
w_{n_1(K)}
\left(
\frac{\sqrt{\pi K}}{\sqrt{-2h'(x_1)}}
+O\left((\ln K)^3\right)
\right).
\end{split}
\]

\textbf{Claim 2:}
For all sufficiently large $K$, one has
\begin{equation}\label{un sum 1 n1K}
	\sum_{j=1}^{n_1(K)-1}w_j
	=
	w_{n_1(K)}
	\left(
	\frac{\sqrt{\pi K}}{\sqrt{-2h'(x_1)}}
	+O\left((\ln K)^3\right)
	\right).
\end{equation}
\textbf{Proof of Claim 2:}
By \textbf{(A2)}, the sequence $(w_n)$ is
increasing below the Allee threshold, since
\[
\frac{w_{n+1}}{w_n}
=
\frac{\widetilde\mu((n+1)/K)}
{\widetilde\lambda((n+1)/K)}
>1,\quad \quad
\frac{n+1}{K}<x_1.
\]
Together with the local expansion used in \textbf{Claim 1}, this gives,
for $m_K<j\le n_1(K)-1$,
\[
w_{n_1(K)-j}\le w_{n_1(K)-m_K}.
\]
Then there exists 
$0<c'<\frac{-h'(x_1)}{4}$ such that
\begin{equation}\label{sum 1 m wj}
\begin{split}
\sum_{j=m_K+1}^{n_1(K)-1}w_{n_1(K)-j}
&\leq
n_1(K)w_{n_1(K)}
\exp\left(
\frac{h'(x_1)}{2}(\ln K)^2
+O\left(\frac{(\ln K)^3}{\sqrt{K}}\right)
\right)                                                                          \\
&\leq
n_1(K)w_{n_1(K)}
\exp\left(-\frac{c'(\ln K)^2}{2}\right).
\end{split}
\end{equation}
For $0\leq j\leq\lfloor\sqrt K\rfloor$,
\[
\begin{split}
w_{n_1(K)-j}
&=
w_{n_1(K)}
\exp\left(
\frac{h'(x_1)}{2K}j^2
+O\left(\frac{j}{K}\right)
\right)                                                                          
=
w_{n_1(K)}
\exp\left(\frac{h'(x_1)}{2K}j^2\right)
\left(1+O\left(\frac{j}{K}\right)\right),
\end{split}
\]
whereas, for $\lfloor\sqrt K\rfloor\leq j\leq m_K$,
\[
\begin{split}
w_{n_1(K)-j}
&=
w_{n_1(K)}
\exp\left(
\frac{h'(x_1)}{2K}j^2
+O\left(\frac{j^3}{K^2}\right)
\right)                                                                          
=
w_{n_1(K)}
\exp\left(\frac{h'(x_1)}{2K}j^2\right)
\left(1+O\left(\frac{j^3}{K^2}\right)\right).
\end{split}
\]
In order to estimate
$\sum\limits_{j=1}^{m_K}w_{n_1(K)-j}$,
it remains to evaluate
\[
\sum_{j=1}^{m_K}
\exp\left(\frac{h'(x_1)}{2K}j^2\right).
\]
By the same argument as in the proof of \textbf{Claim 1},
\[
\begin{split}
\sum_{j=1}^{m_K}
\exp\left(\frac{h'(x_1)}{2K}j^2\right)
&=
\sqrt{K}
\sum_{j=1}^{m_K}\frac{1}{\sqrt{K}}
\exp\left(\frac{h'(x_1)j^2}{2K}\right)=
\frac{\sqrt{\pi K}}{\sqrt{-2h'(x_1)}}
+O(1).
\end{split}
\]
It follows that
\[
\begin{split}
\sum_{j=1}^{m_K}w_{n_1(K)-j}
&=
\sum_{j=1}^{\lfloor\sqrt{K}\rfloor}w_{n_1(K)-j}
+\sum_{j=\lfloor\sqrt{K}\rfloor+1}^{m_K}w_{n_1(K)-j}                             \\
&=
w_{n_1(K)}
\sum_{j=1}^{\lfloor\sqrt{K}\rfloor}
\exp\left(\frac{h'(x_1)}{2K}j^2\right)
\left(1+O\left(\frac{j}{K}\right)\right)                              \\
&\quad+
w_{n_1(K)}
\sum_{j=\lfloor\sqrt{K}\rfloor+1}^{m_K}
\exp\left(\frac{h'(x_1)}{2K}j^2\right)
\left(1+O\left(\frac{j^3}{K^2}\right)\right)                          \\
&=
w_{n_1(K)}
\sum_{j=1}^{m_K}
\exp\left(\frac{h'(x_1)}{2K}j^2\right)                                          \\
&\quad+
w_{n_1(K)}
\sum_{j=1}^{\lfloor\sqrt{K}\rfloor}
\exp\left(\frac{h'(x_1)}{2K}j^2\right)
O\left(\frac{j}{K}\right)                                             \\
&\quad+
w_{n_1(K)}
\sum_{j=\lfloor\sqrt{K}\rfloor+1}^{m_K}
\exp\left(\frac{h'(x_1)}{2K}j^2\right)
O\left(\frac{j^3}{K^2}\right).
\end{split}
\]
Recall that the order of
\[
\max\left\{
\frac{j}{K},\frac{j^3}{K^2}
\right\}
\]
is at most $\frac{(\ln K)^3}{\sqrt{K}}$ for
$j\in\{1,\ldots,m_K\}$. Then
\begin{equation}\label{sum 1 m w n1K-j}
\begin{split}
\sum_{j=1}^{m_K}w_{n_1(K)-j}
&=
w_{n_1(K)}
\left(
\frac{\sqrt{\pi K}}{\sqrt{-2h'(x_1)}}
+O(1)
\right)
\left(
1+O\left(\frac{(\ln K)^3}{\sqrt{K}}\right)
\right)                                                                          \\
&=
w_{n_1(K)}
\left(
\frac{\sqrt{\pi K}}{\sqrt{-2h'(x_1)}}
+O\left((\ln K)^3\right)
\right).
\end{split}
\end{equation}
It follows from \eqref{sum 1 m wj} and
\eqref{sum 1 m w n1K-j} that, for each sufficiently large $K$,
\[
\begin{split}
\sum_{j=1}^{n_1(K)-1}w_j
&=
\sum_{j=1}^{m_K}w_{n_1(K)-j}
+\sum_{j=m_K+1}^{n_1(K)-1}w_{n_1(K)-j} =
w_{n_1(K)}
\left(
\frac{\sqrt{\pi K}}{\sqrt{-2h'(x_1)}}
+O\left((\ln K)^3\right)
\right).
\end{split}
\]

We now complete the proof of Proposition \ref{un2K=2+O}. Combining \eqref{un sum n1K n_2K} and \eqref{un sum 1 n1K}, for each sufficiently large $K$ and $n\in[n_1(K)+m_K+1,n_2(K)]$,
	\[
	\begin{split}
		u^0_{n} =&1+u^0_1\sum_{j=n_1(K)}^{n-1}\frac{1}{\lambda_j\pi_j} =\frac{1+\sum\limits_{j=1}^{n-1}\frac{1}{\lambda_j\pi_j}}{1+\sum\limits_{j=1}^{n_1(K)-1}\frac{1}{\lambda_j\pi_j}} =\frac{1+\sum\limits_{j=1}^{n-1}w_j}{1+\sum\limits_{j=1}^{n_1(K)-1}w_j}\\
		=&\frac{1+2w_{n_1(K)}\left(\frac{\sqrt{\pi K}}{\sqrt{-2h'(x_1)}}+O\left((\ln K)^3\right)\right)}{1+w_{n_1(K)}\left(\frac{\sqrt{\pi K}}{\sqrt{-2h'(x_1)}}+O\left((\ln K)^3\right)\right)}\\
=& 2+\frac{-1+w_{n_1(K)}O((\ln K)^3)}{1+w_{n_1(K)}\left(\frac{\sqrt{\pi K}}{\sqrt{-2h'(x_1)}}+O\left((\ln K)^3\right)\right)}.
	\end{split}
	\]
	Note that
\[
\ln w_{n_1(K)}
=
K\sum_{i=1}^{n_1(K)}
\frac{1}{K}
\ln\frac{\widetilde\mu(i/K)}
{\widetilde\lambda(i/K)}
=
K\int_0^{\frac{n_1(K)}{K}}
\ln\frac{\widetilde\mu(s)}
{\widetilde\lambda(s)}\,ds+O(1)=
K\int_0^{x_1}
\ln\frac{\widetilde\mu(s)}
{\widetilde\lambda(s)}\,ds+O(1).
\]
By \textbf{(A2)}, $\widetilde\mu(s)>\widetilde\lambda(s)$ holds for all $0<s<x_1$, hence,
\[
\int_0^{x_1}
\ln\frac{\widetilde\mu(s)}
{\widetilde\lambda(s)}\,ds>0.
\]
Therefore,
\[
w_{n_1(K)}
=
\exp\left\{
K\int_0^{x_1}
\ln\frac{\widetilde\mu(s)}
{\widetilde\lambda(s)}\,ds+O(1)
\right\}
\]
grows exponentially in $K$. Consequently, the constant terms in
the numerator and denominator above are exponentially negligible, which yields
\[
u_n^0
=
2+O\left(\frac{(\ln K)^3}{\sqrt K}\right)
\]
uniformly for $n\in[n_1(K)+m_K+1,n_2(K)]$.
In particular, $$u^0_{n_2(K)}=2+O\left(\frac{(\ln K)^3}{\sqrt{K}}\right)$$ 
for all sufficiently large $K$.
\end{proof}

\begin{corollary}
\label{prop:transition-profile}
Let $(n_K)_{K\geq1}$ be a sequence of positive integers such that there exists $z\in\mathbb{R}$ with
\[
    z_K:=\frac{n_K-n_1(K)}{\sqrt K}\to z\quad\text{as} \quad K\to\infty.
\]
Then
\[
    \alpha_{n_K}(K)
    \to
    \mathcal{N}\left(\sqrt{-H''(x_1)}\,z\right)\quad\text{as} \quad K\to\infty,
\]
where $\mathcal{N}$ is the standard normal distribution function
defined in Section~\ref{subsec:notation}.
\end{corollary}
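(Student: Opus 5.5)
Since $z_K\to z$, we have $n_K=n_1(K)+O(\sqrt K)\le n_2(K)$ for large $K$. By \eqref{eq:alpha-hitting} and the formula for $u^0$ in \eqref{u_n^0},
\[
\alpha_{n_K}(K)=\frac{u^0_{n_K}}{u^0_{n_2(K)}}
=\frac{1+\sum_{j=1}^{n_K-1}w_j}{1+\sum_{j=1}^{n_2(K)-1}w_j}.
\]
The plan is to normalize numerator and denominator by $w_{n_1(K)}$, which grows exponentially as shown at the end of the proof of Proposition~\ref{un2K=2+O}. The constants $1$ then become exponentially negligible. By Claims~1 and~2 in that proof, the denominator equals $w_{n_1(K)}\bigl(2\sqrt{\pi K}/\sqrt{-2h'(x_1)}+O((\ln K)^3)\bigr)$. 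Equivalently, we may divide by $u^0_{n_2(K)}=2+o(1)$ and reduce the problem to showing
\[
\frac{1}{\sqrt K\,w_{n_1(K)}}\sum_{j=1}^{n_K-1}w_j\longrightarrow \frac{\sqrt{2\pi}}{\sqrt{-h'(x_1)}}\,\mathcal N\bigl(\sqrt{-h'(x_1)}\,z\bigr),
\]
recalling that $h'=H''$. Indeed, $\sqrt{\pi}/\sqrt{-2h'(x_1)}\cdot 2=\sqrt{2\pi}/\sqrt{-h'(x_1)}$, so the ratio of this limit to the denominator limit is exactly $\mathcal N(\sqrt{-H''(x_1)}z)$.

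To evaluate the numerator, split $\sum_{j=1}^{n_K-1}w_j=\sum_{j=1}^{n_1(K)-1}w_j+\sum_{j=n_1(K)}^{n_K-1}w_j$ when $z\ge0$, and subtract $\sum_{j=n_K}^{n_1(K)-1}w_j$ when $z<0$. Claim~2 gives the first piece as $w_{n_1(K)}\sqrt{\pi K}/\sqrt{-2h'(x_1)}\,(1+o(1))$, which corresponds to the Gaussian half-mass $\mathcal N(0)=\frac12$. For the remaining piece, use the local expansions from the proofs of Claims~1 and~2:
\[
w_{n_1(K)\pm j}=w_{n_1(K)}\exp\Bigl(\tfrac{h'(x_1)}{2K}j^2\Bigr)\Bigl(1+O\bigl(\tfrac{(\ln K)^3}{\sqrt K}\bigr)\Bigr),
\]
valid uniformly for $0\le j\le m_K$. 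Since $|n_K-n_1(K)|=O(\sqrt K)\le m_K$, the partial sum becomes $w_{n_1(K)}\sum_{j=0}^{|n_K-n_1(K)|}e^{h'(x_1)j^2/(2K)}(1+o(1))$. A Riemann-sum comparison, exactly as in Claim~1 (monotone integrand, error $O(1)$), turns this into
\[
w_{n_1(K)}\Bigl(\sqrt K\int_0^{|z_K|}e^{h'(x_1)x^2/2}\,dx+O(1)\Bigr).
\]
Continuity of the integral in its upper limit lets us replace $z_K$ by $z$.

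Finally, combine the pieces. Substituting $y=\sqrt{-h'(x_1)}\,x$,
\[
\int_{-\infty}^{0}e^{h'(x_1)x^2/2}\,dx\pm\int_0^{|z|}e^{h'(x_1)x^2/2}\,dx
=\frac{\sqrt{2\pi}}{\sqrt{-h'(x_1)}}\,\mathcal N\bigl(\sqrt{-h'(x_1)}\,z\bigr),
\]
which is the claimed limit. The only delicate point is the off-by-one bookkeeping at the summation endpoints (index $n_K-1$ versus $n_K$, the term $j=0$, and the rounding error $\epsilon_1(K)$). Each such discrepancy contributes $O(w_{n_1(K)})$, which is $o(\sqrt K\,w_{n_1(K)})$ and therefore harmless. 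No step beyond the Laplace-type estimates already established in Proposition~\ref{un2K=2+O} is needed.
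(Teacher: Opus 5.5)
Your proposal is correct and follows the paper's argument: write $\alpha_{n_K}(K)$ as a ratio of partial sums of $w_j$, normalize by $\sqrt K\,w_{n_1(K)}$, and use the local Gaussian profile of $w_{n_1(K)+\ell}/w_{n_1(K)}$ for $|\ell|\le m_K$, together with the estimates of Claims~1 and~2, to pass to Gaussian integrals. The only difference is bookkeeping: you reuse the half-mass asymptotics of Claims~1 and~2 directly and need a Riemann sum only between $n_1(K)$ and $n_K$, whereas the paper discards the two tails and performs a single Riemann sum over $-m_K\le\ell\le n_K-n_1(K)-1$. (Your case split should be on the sign of $n_K-n_1(K)$ rather than of $z$; this matters only when $z=0$, and both cases give the same signed integral.)
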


\begin{proof}
Since $z_K$ converges, $n_K=n_1(K)+O(\sqrt K)$. Moreover,
$n_2(K)-n_1(K)=(x_2-x_1)K+O(1)$, so that
$1\leq n_K\leq n_2(K)$ for all sufficiently large $K$.
For $1\leq n\leq n_2(K)$, the definition of $\alpha_n(K)$ gives
\[
    \alpha_n(K)
    =
    \frac{u_n^0}{u_{n_2(K)}^0}
    =
    \frac{1+\displaystyle\sum_{j=1}^{n-1}w_j}
         {1+\displaystyle\sum_{j=1}^{n_2(K)-1}w_j},
\]
where \(w_j\) is defined in Section~\ref{subsec:notation}.

We first recall the local estimate obtained in the proof of
Proposition~\ref{un2K=2+O}. Uniformly for
\[
    |\ell|\leq m_K=\lfloor\sqrt K\ln K\rfloor,
\]
we have
\[
\begin{split}
    \ln\frac{w_{n_1(K)+\ell}}{w_{n_1(K)}}
    &=
    \frac{h'(x_1)}{2K}\ell^2
    +
    O\left(
        \frac{|\ell|}{K}
        +
        \frac{|\ell|^3}{K^2}
    \right)=
    \frac{h'(x_1)}{2K}\ell^2
    +
    O\left(\frac{(\ln K)^3}{\sqrt K}\right).
\end{split}
\]
Since $h'(x_1)=H''(x_1)<0$, it follows that
\begin{equation}\label{eq:local-Gaussian-profile}
    \frac{w_{n_1(K)+\ell}}{w_{n_1(K)}}
    =
    \exp\left(\frac{H''(x_1)}{2K}\ell^2\right)
    \left(
        1+
        O\left(\frac{(\ln K)^3}{\sqrt K}\right)
    \right)
\end{equation}
uniformly for $|\ell|\leq m_K$.

The tail estimates established in \textbf{Claims 1 and 2} in Proposition \ref{un2K=2+O} give
\begin{equation}\label{eq:two-tail-negligible}
\begin{split}
    \sum_{j=1}^{n_1(K)-m_K-1}w_j
    &=
    o\bigl(w_{n_1(K)}\sqrt K\bigr),\sum_{j=n_1(K)+m_K+1}^{n_2(K)-1}w_j=
    o\bigl(w_{n_1(K)}\sqrt K\bigr).
\end{split}
\end{equation}
Moreover, $w_{n_1(K)}$ is exponentially large, and hence \(    1=o\bigl(w_{n_1(K)}\sqrt K\bigr).\)

We now estimate the numerator. Since $z_K\to z$, for all
sufficiently large $K$,
\[
    |n_K-n_1(K)|\leq m_K.
\]
Using \eqref{eq:two-tail-negligible}, we obtain
\[
\begin{split}
    1+\sum_{j=1}^{n_K-1}w_j
    &=
    \sum_{\ell=-m_K}^{n_K-n_1(K)-1}
    w_{n_1(K)+\ell}
    +
    o\bigl(w_{n_1(K)}\sqrt K\bigr).
\end{split}
\]
Therefore, by \eqref{eq:local-Gaussian-profile},
\[
\begin{split}
&\frac{1+\displaystyle\sum_{j=1}^{n_K-1}w_j}
      {w_{n_1(K)}\sqrt K}=
\frac{1}{\sqrt K}
\sum_{\ell=-m_K}^{n_K-n_1(K)-1}
\exp\left(
    \frac{H''(x_1)}{2}
    \left(\frac{\ell}{\sqrt K}\right)^2
\right)
+o(1).
\end{split}
\]
Since
\[
    \frac{m_K}{\sqrt K}\to\infty,
    \quad
    \frac{n_K-n_1(K)-1}{\sqrt K}\to z,
\]
the Riemann-sum approximation yields
\begin{equation}\label{eq:numerator-Gaussian-limit}
    \frac{1+\displaystyle\sum_{j=1}^{n_K-1}w_j}
         {w_{n_1(K)}\sqrt K}
    \to
    \int_{-\infty}^{z}
    \exp\left(\frac{H''(x_1)}{2}y^2\right)\,dy .
\end{equation}
Similarly, using both tail estimates in
\eqref{eq:two-tail-negligible}, we have
\[
\begin{split}
    1+\sum_{j=1}^{n_2(K)-1}w_j
    &=
    \sum_{\ell=-m_K}^{m_K}
    w_{n_1(K)+\ell}
    +
    o\bigl(w_{n_1(K)}\sqrt K\bigr).
\end{split}
\]
Consequently,
\[
\begin{split}
&\frac{1+\displaystyle\sum_{j=1}^{n_2(K)-1}w_j}
      {w_{n_1(K)}\sqrt K}=
\frac{1}{\sqrt K}
\sum_{\ell=-m_K}^{m_K}
\exp\left(
    \frac{H''(x_1)}{2}
    \left(\frac{\ell}{\sqrt K}\right)^2
\right)
+o(1),
\end{split}
\]
which implies that
\begin{equation}\label{eq:denominator-Gaussian-limit}
    \frac{1+\displaystyle\sum_{j=1}^{n_2(K)-1}w_j}
         {w_{n_1(K)}\sqrt K}
    \to
    \int_{-\infty}^{\infty}
    \exp\left(\frac{H''(x_1)}{2}y^2\right)\,dy .
\end{equation}

Combining \eqref{eq:numerator-Gaussian-limit} and
\eqref{eq:denominator-Gaussian-limit}, we obtain
\[
\begin{split}
    \alpha_{n_K}(K)
    &\to
    \frac{
        \displaystyle\int_{-\infty}^{z}
        \exp\left(\frac{H''(x_1)}{2}y^2\right)\,dy
    }{
        \displaystyle\int_{-\infty}^{\infty}
        \exp\left(\frac{H''(x_1)}{2}y^2\right)\,dy
    }.
\end{split}
\]
Making the change of variables
\[
    s=\sqrt{-H''(x_1)}\,y,
\]
we conclude that
\[
\begin{split}
    \alpha_{n_K}(K)
    &\to
    \frac{1}{\sqrt{2\pi}}
    \int_{-\infty}^{\sqrt{-H''(x_1)}\,z}
    e^{-\frac{s^2}{2}}\,ds=
    \mathcal{N}\left(\sqrt{-H''(x_1)}\,z\right).
\end{split}
\]
This completes the proof.
\end{proof}

We next construct a single left solution up to $n_2(K)$, with its
normalization imposed at $n_1(K)$. This construction preserves the
recurrence at the threshold, including the contribution from smaller
population states.

\begin{lemma}\label{v n-u 0 n 1lenlen1K}\label{n1K<n<n2K}
For all sufficiently large $K$, write $s=n_1(K)$ and $N=n_2(K)$.
There exists a constant $C_L>0$, independent of $K$, such that, for
$|\rho|\leq(3C_LK)^{-1}$, there is a sequence
\[
v_n(\rho)=u_n^0(1+\zeta_n(\rho)),\quad 1\leq n\leq N,
\]
which satisfies \eqref{nonhomogeneous equation} for $1\leq n<N$ and
$\zeta_s(\rho)=0$. The sequence $\zeta$ is analytic in $\rho$ on
$|\rho|<(3C_LK)^{-1}$ and satisfies
\[
\|\zeta(\rho)\|_\infty
\leq\frac{C_LK|\rho|}{1-C_LK|\rho|}\leq\frac12.
\]
In particular, $v_n(\rho)>0$ for real $\rho$ in the stated interval.
Set $\eta_n=\zeta_n$ for $1\leq n\leq s$ and
$\xi_n=\zeta_n$ for $s\leq n\leq N$. Then
\[
\|\xi'(\rho)-\Xi^0\|_\infty\leq4(C_LK)^2|\rho|,
\]
where $\Xi_s^0=0$ and
\begin{equation}\label{Xi_n^0}
\Xi_n^0=-\sum_{j=s}^{n-1}
\frac{1}{\lambda_j\pi_ju_j^0u_{j+1}^0}
\sum_{p=1}^{j}\pi_p(u_p^0)^2,
\quad s<n\leq N.
\end{equation}
Moreover, $\|\Xi^0\|_\infty\leq C_LK$, and, uniformly on
$|\rho|\leq(3C_LK)^{-1}$,
\[
\|\zeta'(\rho)\|_\infty\leq O(1) K,
\quad
\|\zeta''(\rho)\|_\infty\leq O(1) K^2.
\]
Consequently,
\[
|v_n(\rho)-u_n^0|\leq O(1) K|\rho|\Phi_n,
\quad 1\leq n\leq N.
\]
\end{lemma}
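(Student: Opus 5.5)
We substitute $v_n=u_n^0(1+\zeta_n)$ into \eqref{nonhomogeneous equation} and use the zero-energy equation \eqref{homogeneous equation} together with detailed balance \eqref{LambdaPiMu}. Multiplying the $n$-th equation by $\pi_nu_n^0$ gives the divergence form
\[
\lambda_n\pi_nu_n^0u_{n+1}^0(\zeta_{n+1}-\zeta_n)-\lambda_{n-1}\pi_{n-1}u_{n-1}^0u_n^0(\zeta_n-\zeta_{n-1})=-\rho\,\pi_n(u_n^0)^2(1+\zeta_n),\quad 1\le n<N .
\]
At $n=1$ the second term is absent because $u_0^0=0$. Summing from $p=1$ to $p=j$ telescopes to
\[
\zeta_{j+1}-\zeta_j=-\frac{\rho}{\lambda_j\pi_ju_j^0u_{j+1}^0}\sum_{p=1}^{j}\pi_p(u_p^0)^2(1+\zeta_p).
\]
We impose $\zeta_s=0$ and sum forward for $n>s$ and backward for $n<s$. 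This produces a fixed-point equation $\zeta=\rho\,T(\mathbbm 1+\zeta)$ on $\ell^\infty(\{1,\dots,N\})$, where $T$ is the linear operator with nonnegative kernel
\[
T(n,p)=\sum_{j\in J(n)}\frac{\pi_p(u_p^0)^2\mathbbm 1_{\{p\le j\}}}{\lambda_j\pi_ju_j^0u_{j+1}^0}.
\]
Here $J(n)=[s,n-1]$ for $n>s$ (with a minus sign) and $J(n)=[n,s-1]$ for $n<s$. In particular $(T\mathbbm 1)$ restricted to $n\ge s$ is exactly $\Xi^0$ up to sign, as in \eqref{Xi_n^0}. Conversely, any bounded solution of the fixed-point equation gives a solution of the recurrence for $1\le n<N$, because the boundary equation at $n=1$ is built in.

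The key estimate is $\|T\|_{\infty\to\infty}=\sup_n\sum_pT(n,p)\le C_LK$; everything else follows from it. The Neumann series $\zeta(\rho)=\sum_{k\ge1}\rho^kT^k\mathbbm 1$ converges for $C_LK|\rho|<1$ and is analytic in $\rho$. It satisfies $\|\zeta\|_\infty\le C_LK|\rho|/(1-C_LK|\rho|)\le\frac12$ when $|\rho|\le(3C_LK)^{-1}$, so $v_n>0$ for real $\rho$. Termwise differentiation gives $\zeta'=T\mathbbm 1+\sum_{k\ge2}k\rho^{k-1}T^k\mathbbm 1$. Hence
\[
\|\zeta'-T\mathbbm 1\|_\infty\le\sum_{k\ge2}k|\rho|^{k-1}(C_LK)^k\le 4(C_LK)^2|\rho|
\]
for $C_LK|\rho|\le1/3$. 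The bound $\|\zeta''\|_\infty=O(K^2)$ follows in the same way. Finally, $|v_n-u_n^0|=u_n^0|\zeta_n|\le O(1)K|\rho|u_n^0$, and $u_n^0=\Phi_n$ for $n\le N$.

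To bound $T$, we use that $u^0$ is increasing and that $\lambda_j\pi_j(u_{j+1}^0-u_j^0)=u_1^0$. Since $u_p^0\le u_j^0$ for $p\le j$, we get
\[
\frac{\sum_{p\le j}\pi_p(u_p^0)^2}{\lambda_j\pi_ju_j^0u_{j+1}^0}\le w_j\frac{\sum_{p\le j}\pi_pu_p^0}{u_{j+1}^0}.
\]
We then split the $j$-range into three regions: the extinction basin $j<s-m_K$, the threshold layer $|j-s|\le m_K$, and $(s+m_K,N)$. In each region we estimate $w_j\pi_p$ by the discrete Laplace estimates of Proposition~\ref{un2K=2+O}, writing $\pi_p/\pi_j\approx e^{K(H(j/K)-H(p/K))}$ times bounded factors.

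\begin{itemize}
\item Between $s$ and $N$, the drift pushes upward, so the sums $\sum_{p\le j}\pi_p/\pi_j$ are geometrically dominated by $p$ near $j$. Each $j$ then contributes $O(1/\lambda_j)=O(1/j)$, which is harmless.
\item In the layer of width $O(\sqrt K\ln K)$, the Gaussian profile of $w$ gives a total contribution $O(K)$; this is where the factor $K$ comes from.
\item Below $s-m_K$, the weights $\pi_p$ with small $p$ may be large, since $c=H(0)$ can be below $H(j/K)$. Here we must use the smallness of $u_p^0\approx u_1^0\sum_{i<p}w_i$, with $u_1^0\asymp(w_s\sqrt K)^{-1}$. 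This makes $\pi_pu_p^0$ comparable to $w_p\pi_p/(w_s\sqrt K)=1/(\lambda_p w_s\sqrt K)$, and the resulting sums are at most polynomial in $K$ after the exponential factors cancel.
\end{itemize}

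A cleaner alternative we would try first is the probabilistic reading. $T\mathbbm 1$ is essentially the expected time for the Doob $h$-transformed chain with $h=u^0$ (the chain conditioned to hit $N$ before $0$) to travel between $n$ and $s$. Its drift is bounded away from zero outside the layer, so this expected time is $O(K)$.

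The main obstacle is this uniform kernel bound, especially the backward region $n<s$, where it must hold uniformly in $n$ down to $n=1$ and regardless of the sign of $H(0)$.
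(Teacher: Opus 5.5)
\textbf{Where the proof breaks.} Your setup matches the paper's argument: the divergence form, the operator $T$, the Neumann series, analyticity, and the derivative bounds. Everything hinges on the single estimate $\|T\|_{\infty\to\infty}\le C_LK$ with $C_L$ independent of $K$, and that is where your proposal has a genuine gap. Your reduction $(u_p^0)^2\le u_p^0u_j^0$, followed by estimating each $j$-term separately, cannot give a linear bound in the extinction basin.

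To see this, take the row $n=1$ and $\epsilon K\le j\le (x_1-\epsilon)K$. There $w_i/w_{i-1}=\mu_i/\lambda_i\ge r>1$ for all $i\le j$. Hence $u_{j+1}^0=u_1^0\sum_{i=0}^{j}w_i\le C u_1^0 w_j$, that is, $w_j/u_{j+1}^0\ge c/u_1^0$. On the other hand, $u_p^0\ge u_1^0w_{p-1}$ and detailed balance give $\pi_pu_p^0\ge u_1^0\pi_pw_{p-1}=u_1^0/\mu_p$. Since $\mu_p\le p\,\widetilde{\mu}(x_1)$, this yields $\sum_{p\le j}\pi_pu_p^0\ge c'u_1^0\ln K$.

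So every such $j$ contributes at least order $\ln K$ to your majorant, and the row $n=1$ of the majorant is at least of order $K\ln K$. This is exactly why your third bullet only reaches ``polynomial in $K$''. A bound of order $K\ln K$ or worse does not give the radius $(3C_LK)^{-1}$, nor $\|\zeta'\|_\infty=O(K)$, $\|\zeta''\|_\infty=O(K^2)$, or $|v_n-u_n^0|\le O(1)K|\rho|\Phi_n$ as stated. The probabilistic alternative does not close the gap either. The drift of the $h$-transformed chain is asserted rather than computed, and nothing is said about the threshold layer.

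\textbf{The missing step.} You must perform the $j$-sum exactly before estimating, using the identity you already wrote down. From $\lambda_j\pi_j(u_{j+1}^0-u_j^0)=u_1^0$ one gets the telescoping form
$\frac{1}{\lambda_j\pi_ju_j^0u_{j+1}^0}=\frac{1}{u_1^0}\bigl(\frac{1}{u_j^0}-\frac{1}{u_{j+1}^0}\bigr)$.

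For $n\le s$, interchange the sums. Then $\sum_{j=\max\{n,p\}}^{s-1}\frac{1}{\lambda_j\pi_ju_j^0u_{j+1}^0}\le \frac{1}{u_1^0u_p^0}$. This cancels one factor $u_p^0$ in $\pi_p(u_p^0)^2$ and leaves $\sum_{p<s}\pi_pu_p^0/u_1^0$. Because $w_0=1\le w_1\le\dots\le w_s$, one has $\pi_pu_p^0/u_1^0=\pi_p\sum_{r=0}^{p-1}w_r\le p\pi_pw_p=p/\lambda_p\le 1/\widetilde{\lambda}(0)$. So the row sum is $O(K)$, with no Laplace asymptotics and independently of the sign of $H(0)$.

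For $n\ge s$, the same telescoping gives $\sum_{j=s}^{N-1}(\lambda_j\pi_ju_j^0u_{j+1}^0)^{-1}\le 1/u_1^0$. Combined with $\sum_{p\le s}\pi_p(u_p^0)^2\le O(K)u_1^0$, which follows from the previous bound and $u_p^0\le 1$, this handles $p\le s$. For $s<p\le j<N$ the kernel is at most $w_j/(\lambda_pw_p)\le 1/\lambda_p=O(1/K)$, summed over $O(K^2)$ pairs. The forward region is not the problem in your sketch; the backward rows are, and they need this exact summation.
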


\begin{proof}
Throughout this proof, the supremum norm is taken on $\{1,\ldots,N\}$.
Put
\[
C_j=\lambda_j\pi_ju_j^0u_{j+1}^0,
\quad m_p=\pi_p(u_p^0)^2.
\]
Define a linear operator $\mathcal{T}$ by
\[
(\mathcal{T}z)_n=
\begin{cases}
\displaystyle\sum_{j=n}^{s-1}\frac{1}{C_j}
\sum_{p=1}^{j}m_pz_p,&1\leq n\leq s,\\[2mm]
\displaystyle-\sum_{j=s}^{n-1}\frac{1}{C_j}
\sum_{p=1}^{j}m_pz_p,&s\leq n\leq N.
\end{cases}
\]
Both expressions vanish at $n=s$. We first prove
$\|\mathcal{T}\|\leq C_LK$.
Since $u_{j+1}^0-u_j^0=u_1^0w_j$, telescoping gives
\[
\frac1{C_j}=\frac1{u_1^0}
\left(\frac1{u_j^0}-\frac1{u_{j+1}^0}\right).
\]
For $1\leq p\leq s$, the weights $w_0:=1,w_1,\ldots,w_s$
are nondecreasing, and therefore
\begin{equation}\label{eq:left-weight-bound}
\frac{\pi_pu_p^0}{u_1^0}
=\pi_p\sum_{r=0}^{p-1}w_r
\leq p\pi_pw_p
=\frac{p}{\lambda_p}
\leq\frac1{\widetilde{\lambda}(0)}.
\end{equation}
For $n\leq s$ and $\|z\|_\infty\leq1$, it follows that
\[
\begin{aligned}
|(\mathcal{T}z)_n|
&\leq\sum_{p=1}^{s-1}m_p
\sum_{j=\max\{n,p\}}^{s-1}\frac1{C_j}\leq\sum_{p=1}^{s-1}\frac{\pi_pu_p^0}{u_1^0}
\leq O(1) K.
\end{aligned}
\]
For $n\geq s$, split the inner sum at $p=s$. Since $u_p^0\leq1$
for $p\leq s$, \eqref{eq:left-weight-bound} gives
\begin{equation}\label{eq:left-flux-bound}
\sum_{p=1}^{s}m_p\leq O(1) K u_1^0.
\end{equation}
Also,
\[
\sum_{j=s}^{N-1}\frac1{C_j}
=\frac1{u_1^0}\left(1-\frac1{u_N^0}\right)
\leq\frac1{u_1^0}.
\]
Thus the contribution from $p\leq s$ is at most $O(1)K$.
For $s<p\leq j<N$, monotonicity of $u^0$ and
$\mu_r/\lambda_r\leq1$ for $s<r\leq N$ imply
\[
\frac{m_p}{C_j}
\leq\frac{\pi_p}{\lambda_j\pi_j}
=\frac{w_j}{\lambda_pw_p}
\leq\frac1{\lambda_p}\leq O\left(\frac{1}{K}\right).
\]
Since $N-s=O(K)$,
\[
\sum_{j=s}^{N-1}\sum_{p=s+1}^{j}\frac{m_p}{C_j}
\leq O\left(\frac{1}{K}\right)(N-s)^2\leq O(1)K.
\]
Combining this estimate with the contribution
from $p\le s$ yields
\[
\|T\|_{\infty}\le C_LK.
\]

Define
\[
\zeta=\rho(I-\rho\mathcal{T})^{-1}\mathcal{T}\mathbbm{1}.
\]
The Neumann series proves analyticity and the asserted norm estimate.
The fixed-point identity $\zeta=\rho\mathcal{T}(\mathbbm{1}+\zeta)$
gives, for $1\leq n<N$,
\[
C_n(\zeta_n-\zeta_{n+1})
=\rho\sum_{p=1}^{n}m_p(1+\zeta_p).
\]
Subtracting the identity at $n-1$, with zero flux at $n=0$, yields
\[
C_n(\zeta_n-\zeta_{n+1})
-C_{n-1}(\zeta_{n-1}-\zeta_n)\mathbbm{1}_{\{n\geq2\}}
=\rho m_n(1+\zeta_n).
\]
After division by $\pi_nu_n^0$, this is precisely
\eqref{nonhomogeneous equation} for $v_n=u_n^0(1+\zeta_n)$.
In particular, the identity holds at $n=s$; the sum over $p\leq s$
retains the flux from the left interval.

Differentiating the resolvent gives
\[
\zeta'(\rho)=(I-\rho\mathcal{T})^{-2}\mathcal{T}\mathbbm{1},
\quad
\zeta''(\rho)=2(I-\rho\mathcal{T})^{-3}\mathcal{T}^2\mathbbm{1}.
\]
These formulas imply the derivative bounds. If $t=C_LK|\rho|\leq1/3$,
then the Neumann series also gives
\[
\begin{aligned}
\|\zeta'(\rho)-\mathcal{T}\mathbbm{1}\|_\infty
&\leq C_LK\bigl((1-t)^{-2}-1\bigr)\leq4(C_LK)^2|\rho|.
\end{aligned}
\]
Restricting $\mathcal{T}\mathbbm{1}$ to $\{s,\ldots,N\}$ gives
\eqref{Xi_n^0}. Finally, $\Phi_n=u_n^0$ for $n\leq N$, so the
bound for $v-u^0$ follows from the norm estimate for $\zeta$.
\end{proof}

\begin{lemma}\label{n>n2K}
For all sufficiently large $K$, write $N=n_2(K)$. There is a constant
$C_R>0$, independent of $K$, such that for
$|\rho|\leq(3C_RK)^{-1}$ there is a bounded sequence
\[
v_n(\rho)=1+\theta_n(\rho),\quad n\geq N-1,
\]
which satisfies \eqref{nonhomogeneous equation} for $n\geq N$ and
$\theta_{N-1}=0$. The sequence $\theta$ is analytic in \(\rho\) on $|\rho|<(3C_RK)^{-1}$ and satisfies
\[
\|\theta(\rho)\|_\infty
\leq\frac{C_RK|\rho|}{1-C_RK|\rho|}\leq\frac12.
\]
Thus $1+\theta_n(\rho)>0$ for real $\rho$ in the stated interval.
Furthermore,
\[
\|\theta'(\rho)-\Theta^0\|_\infty\leq4(C_RK)^2|\rho|,
\]
where $\Theta_{N-1}^0=0$ and
\begin{equation}\label{Theta_n^0}
\Theta_n^0=\sum_{j=N-1}^{n-1}\frac1{\lambda_j\pi_j}
\sum_{p=j+1}^{\infty}\pi_p,\quad n\geq N.
\end{equation}
The norms of $\Theta^0$, $\theta'(\rho)$, and $\theta''(\rho)$
are bounded by $O(1)K$, $O(1)K$, and $O(1)K^2$, respectively.
\end{lemma}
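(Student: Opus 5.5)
The construction will mirror Lemma~\ref{v n-u 0 n 1lenlen1K}, but on the right half-line, with the constant sequence playing the role of $u^0$. We look for $v_n=1+\theta_n$ with $\theta_{N-1}=0$. Summing \eqref{nonhomogeneous equation} against $\pi_n$ from $n$ to $\infty$ gives the flux form of the recurrence. Detailed balance \eqref{LambdaPiMu} turns the left side into a telescoping difference of the fluxes $\lambda_j\pi_j(v_{j+1}-v_j)$. By \eqref{mun2pin} and boundedness, the flux at infinity vanishes for bounded $v$. This yields the fixed-point problem
\[
\theta=\rho\,\mathcal{S}(\mathbbm{1}+\theta),\qquad
(\mathcal{S}z)_n:=\sum_{j=N-1}^{n-1}\frac{1}{\lambda_j\pi_j}\sum_{p=j+1}^{\infty}\pi_pz_p,\quad n\ge N-1,
\]
with $(\mathcal{S}z)_{N-1}=0$. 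Conversely, any bounded solution of this identity satisfies $\lambda_j\pi_j(\theta_{j+1}-\theta_j)=\rho\sum_{p>j}\pi_p(1+\theta_p)$. Differencing at $j=n$ and $j=n-1$ and dividing by $\pi_n$ recovers \eqref{nonhomogeneous equation} for every $n\ge N$. Once $\|\mathcal{S}\|_{\infty\to\infty}\le C_RK$ is known, set $\theta=\rho(I-\rho\mathcal{S})^{-1}\mathcal{S}\mathbbm{1}$. The Neumann series then gives analyticity on $|\rho|<(3C_RK)^{-1}$, the bound $\|\theta\|_\infty\le t/(1-t)\le\frac12$ with $t=C_RK|\rho|$, and hence positivity for real $\rho$. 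It also gives $\theta'=(I-\rho\mathcal{S})^{-2}\mathcal{S}\mathbbm{1}$ and $\theta''=2(I-\rho\mathcal{S})^{-3}\mathcal{S}^2\mathbbm{1}$. Since $\Theta^0=\mathcal{S}\mathbbm{1}$, we get $\|\theta'-\Theta^0\|_\infty\le C_RK((1-t)^{-2}-1)\le4(C_RK)^2|\rho|$. The bounds $O(K)$ and $O(K^2)$ on $\Theta^0$, $\theta'$ and $\theta''$ follow exactly as in the left lemma.

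The main obstacle is the operator bound $\sup_n(\mathcal{S}\mathbbm{1})_n=\sum_{j\ge N-1}w_j\sum_{p>j}\pi_p\le C_RK$. I would split at $n_{\frac12}(K)$. The tail $j\ge n_{\frac12}(K)$ is exactly $M_K$, which is bounded uniformly in $K$ by Proposition~\ref{proposition7.3} (equivalently, by Lemma~\ref{sum le M sum le CK}, which is cited for \eqref{QSDunique}). For $N-1\le j<n_{\frac12}(K)$ there are $O(K)$ indices, so it suffices to show $w_j\sum_{p>j}\pi_p=O(1)$ uniformly. We write $w_j\pi_p=\pi_p/(\lambda_j\pi_j)$. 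By detailed balance, $\pi_p/\pi_j=\prod_{i=j}^{p-1}\lambda_i/\mu_{i+1}$. For $i\ge n_2(K)$, roughly, $\lambda_i/\mu_{i+1}\le\widetilde\lambda(i/K)/\widetilde\mu((i+1)/K)\le1$, because $\widetilde\mu$ is increasing and $\widetilde\lambda\le\widetilde\mu$ beyond $x_2$. The rounding at $j=N-1$ costs only an $O(1)$ factor.

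The sum over $p$ then splits into two parts. Near $x_2$, with $p-j\lesssim\sqrt K$, the ratios are $\exp(-O((p-j)^2/K))$ at worst, up to $O(1)$ factors. This contributes $O(\sqrt K)$ terms, each bounded by $1/\lambda_j=O(1/K)$. Beyond the point where $H$ has increased, the ratios are geometrically small; past $n_{\frac12}$ the ratio is at most $\frac12$. Altogether $w_j\sum_{p>j}\pi_p\le O(1)\bigl(\sqrt K+K\bigr)/K=O(1)$. A cruder bound is also available: each ratio $\pi_p/\pi_j\le O(1)$ for $p$ up to $n_{\frac12}$, there are $O(K)$ such terms, and the geometric tail beyond is summable. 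This already gives $O(1)$ after dividing by $\lambda_j\asymp K$, and it is enough for the claimed bound. Summing over the $O(K)$ values of $j$ then gives $\|\mathcal{S}\|\le C_RK$.
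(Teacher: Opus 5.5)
Your proposal is correct and follows the paper's proof: it uses the same operator $B=\mathcal{S}$, the same Neumann-series definition $\theta=\rho(I-\rho B)^{-1}B\mathbbm{1}$, the same flux-differencing check of the recurrence for $n\ge N$, and the same resolvent bounds for $\theta'$ and $\theta''$. The only difference is in the bound $\|B\|\le C_RK$. The paper cites Lemma~\ref{sum le M sum le CK} for $j\ge N$ and treats $j=N-1$ by a one-step detailed-balance identity. You instead derive the bound directly, using $\pi_p/\pi_j\le 1+O(1/K)$ past $x_2$, geometric decay beyond $n_{\frac12}(K)$, $\lambda_j\asymp K$, and the uniform bound on $M_K$; this is a valid self-contained substitute for the cited estimate.
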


\begin{proof}
On $\ell^\infty(\{N-1,N,\ldots\})$, define
\[
(Bz)_n=\sum_{j=N-1}^{n-1}\frac1{\lambda_j\pi_j}
\sum_{p=j+1}^{\infty}\pi_pz_p.
\]
By Lemma~\ref{sum le M sum le CK}, 
\[
\sum_{j=N}^{\infty}
\frac1{\lambda_j\pi_j}
\sum_{p=j+1}^{\infty}\pi_p\le CK ,\quad\text{for}\ \|z\|_\infty\le1.
\]
For the remaining term $j=N-1$, detailed balance gives
\[
\frac1{\lambda_{N-1}\pi_{N-1}}\sum_{p=N}^{\infty}\pi_p
=\frac1{\mu_N}
+\frac{\lambda_N}{\mu_N}\frac1{\lambda_N\pi_N}
\sum_{p=N+1}^{\infty}\pi_p\leq O(1)K,
\]
since $\lambda_N/\mu_N=1+O(1/K)$ and $\mu_N$ is of order $K$.
 Therefore, after increasing the constant if necessary,
\[
\|B\|_{\infty}\le C_RK.
\]
Set
\begin{equation}\label{theta_n}
\theta=\rho(I-\rho B)^{-1}B\mathbbm{1}.
\end{equation}
The fixed-point equation $\theta=\rho B(\mathbbm{1}+\theta)$ implies
\[
\lambda_n\pi_n(\theta_{n+1}-\theta_n)
=\rho\sum_{p=n+1}^{\infty}\pi_p(1+\theta_p).
\]
Subtracting consecutive identities proves
\[
\lambda_n(\theta_{n+1}-\theta_n)
+\mu_n(\theta_{n-1}-\theta_n)=-\rho(1+\theta_n),\quad n\geq N.
\]
The analyticity, positivity for real $\rho$, and derivative estimates
follow from the same resolvent bounds as in the preceding lemma,
with $B\mathbbm{1}=\Theta^0$.
\end{proof}

\subsection{Matching and proof of Theorem~\ref{theorem of principal eigenvalue and eigenfunction}}
We first estimate the quantities needed to match the two solutions.

\begin{lemma}\label{u-u,Theta,Xi,GK,pi}
For all sufficiently large $K$, the following estimates hold:
\begin{itemize}
\item[\rm(i)]
\[
u^0_{n_2(K)}-u^0_{n_2(K)-1}
=\sqrt{\frac{-2H''(x_1)}{\pi K}}e^{-KH(x_1)}
\left(1+O\left(\frac{(\ln K)^3}{\sqrt K}\right)\right);
\]
\item[\rm(ii)]
\[
\Xi^0_{n_2(K)-1}-\Xi^0_{n_2(K)}
=\frac1{x_2\widetilde{\lambda}(x_2)}
\sqrt{\frac{\pi}{2KH''(x_2)}}
\left(1+O\left(\frac{(\ln K)^3}{\sqrt K}\right)\right);
\]
\item[\rm(iii)]
\[
\Theta^0_{n_2(K)}
=\frac1{x_2\widetilde{\lambda}(x_2)}
\sqrt{\frac{\pi}{2KH''(x_2)}}
\left(1+O\left(\frac{(\ln K)^3}{\sqrt K}\right)\right).
\]
\end{itemize}
\end{lemma}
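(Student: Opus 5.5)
For (i) I will use the explicit telescoping formula $u^0_{N}-u^0_{N-1}=u_1^0w_{N-1}$ with $N=n_2(K)$. Claim 2 in the proof of Proposition~\ref{un2K=2+O} gives
\[
u_1^0=\Bigl(1+\sum_{j=1}^{n_1(K)-1}w_j\Bigr)^{-1}
=\frac{1}{w_{n_1(K)}}\sqrt{\frac{-2h'(x_1)}{\pi K}}\Bigl(1+O\bigl(\tfrac{(\ln K)^3}{\sqrt K}\bigr)\Bigr),
\]
since $w_{n_1(K)}$ is exponentially large. The problem therefore reduces to computing the ratio $w_{N-1}/w_{n_1(K)}=\prod_{i=n_1+1}^{N-1}\widetilde\mu(i/K)/\widetilde\lambda(i/K)$. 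Its logarithm is $\sum_{i=n_1+1}^{N-1}h(i/K)$, a Riemann sum for $K\int_{x_1}^{x_2}h=K(H(x_2)-H(x_1))=-KH(x_1)$. The step where precision matters is the $O(1)$ boundary correction. I would write the sum via the Euler--Maclaurin or trapezoid rule. Because $h(x_1)=h(x_2)=0$, the endpoint terms are $O(1/K)$, and the rounding offsets $\epsilon_i$ only contribute $K\int_{n_i/K}^{x_i}h=O(1/K)$, since $h$ vanishes to first order there. The trapezoid error is $O(\sup|h''|/K)$. Altogether $w_{N-1}=w_{n_1}e^{-KH(x_1)}(1+O(1/K))$, and (i) follows with the stated error, dominated by the $(\ln K)^3/\sqrt K$ from Claim 2. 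Recall $h'(x_1)=H''(x_1)$.

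For (ii), \eqref{Xi_n^0} gives
\[
\Xi^0_{N-1}-\Xi^0_N=\frac{1}{\lambda_{N-1}\pi_{N-1}u^0_{N-1}u^0_N}\sum_{p=1}^{N-1}\pi_p(u_p^0)^2 .
\]
By Proposition~\ref{un2K=2+O}, $u^0_{N-1}u^0_N=4(1+O((\ln K)^3/\sqrt K))$. I split the sum at $n_1(K)+m_K$.

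On $[n_1+m_K+1,N-1]$ we have $(u_p^0)^2=4(1+O(\cdot))$. On the rest, $u_p^0\le 2$, and I will show $\pi_p$ is exponentially smaller relative to $\pi_{N}$ (or that region carries negligible mass). To see this, note $\ln(\pi_p/\pi_N)\approx K(H(x_2)-H(p/K))=-KH(p/K)$. Near $x_1$ this is $-KH(x_1)<0$, but near $0$ it is $-KH(0)$, which could be positive if $c<0$. This is the main obstacle. The resolution: on $p\le n_1$, \eqref{eq:left-weight-bound} gives $\pi_p (u^0_p)^2\le \pi_pu_p^0\le u_1^0/\widetilde\lambda(0)$, so $\sum_{p\le n_1}m_p=O(Ku_1^0)$, and $u_1^0\sim e^{-K\int_0^{x_1}h}$. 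Relative to $\pi_N\sim e^{KH(0)}\cdot O(\mathrm{poly})$, the exponent is $H(0)-H(x_1)-H(0)=-H(x_1)$ up to polynomial factors, which is exponentially small. The intermediate range $(n_1,n_1+m_K]$ is handled the same way, using $u^0\le 2$ and monotonicity of $\pi$ there.

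The remaining sum $4\sum_{p}\pi_p$ near $N$ is a discrete Laplace sum. Writing $\pi_p/\pi_N=\exp(-\tfrac{H''(x_2)}{2K}(p-N)^2+O(\cdot))$ for $|p-N|\le m_K$, exactly as in Claim 1 but at $x_2$, and using only the left half-line $p\le N-1$, I get $\pi_N\sqrt{\pi K/(2H''(x_2))}(1+O((\ln K)^3/\sqrt K))$. Finally, $\lambda_{N-1}\pi_{N-1}=\mu_N\pi_N$ by \eqref{LambdaPiMu}, and $\mu_N=Kx_2\widetilde\mu(x_2)(1+O(1/K))=Kx_2\widetilde\lambda(x_2)(1+O(1/K))$. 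Combining, the factors $4$ cancel and (ii) results.

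For (iii), \eqref{Theta_n^0} gives $\Theta^0_N=\frac{1}{\lambda_{N-1}\pi_{N-1}}\sum_{p\ge N}\pi_p$. This is the right half of the same Laplace sum. The tail beyond $N+m_K$ is negligible by monotone decay of $\pi$ above $x_2$ on $[x_2,x_{1/2}]$ and geometric decay beyond $n_{1/2}(K)$, where $\lambda_p/\mu_p\le 1/2$. Together with the same prefactor $\mu_N\pi_N$, this yields the identical leading term. The one-sided Gaussian sums each equal $\tfrac12\sqrt{2\pi K/H''(x_2)}+O(1)$, absorbing the inclusion or exclusion of $p=N$.
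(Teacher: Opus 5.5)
Your proposal is correct and follows essentially the paper's route. For (i) you use $u^0_N-u^0_{N-1}=u_1^0w_{N-1}$ together with Claim~2 and an $O(1/K)$-accurate evaluation of $w_{N-1}/w_{n_1(K)}$; you carry out the trapezoid computation directly, where the paper cites Lemma~\ref{Gamma n m=sqrt}. For (ii)--(iii) you use the same ingredients as the paper: the exact flux formulas, the left-flux bound \eqref{eq:left-weight-bound} for $p\le n_1(K)$, and one-sided discrete Laplace sums at $x_2$ with exponentially (or $e^{-c(\ln K)^2}$-) negligible remainders.

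One small point: $\pi_p$ is monotone on $(n_1(K),n_2(K))$ only outside $O(1)$-neighbourhoods of the endpoints. For the intermediate ranges it is therefore cleaner to invoke the product estimate of Lemma~\ref{Gamma n m=sqrt}, as the paper does.
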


\begin{proof}
Write 
\[
s=n_1(K),\quad N=n_2(K),\quad m=m_K,\quad r_K=\frac{(\ln K)^3}{\sqrt K}.
\]
Let $a=-H''(x_1)>0$ and
$b_2=H''(x_2)>0$.
By \textbf{Claim~2} in Proposition~\ref{un2K=2+O},
\[
u_1^0=\frac1{w_s}\sqrt{\frac{2a}{\pi K}}(1+O(r_K)).
\]
By Lemma~\ref{Gamma n m=sqrt},
\[
\Gamma_{N,s}
=
\sqrt{
	\frac{\mu_s}{\lambda_s}
	\frac{\lambda_N}{\mu_N}}
\exp\left\{
K\left[H\left(\frac{N}{K}\right)-H\left(\frac{s}{K}\right)\right]+O\left(\frac{1}{K}\right)
\right\}.
\]
Since $s/K=x_1+O(1/K)$, $N/K=x_2+O\left(1/K\right)$,
$H'(x_1)=H'(x_2)=0$, and $H(x_2)=0$, we have
\[
H\left(\frac{N}{K}\right)-H\left(\frac{s}{K}\right)
=
-H(x_1)+O\left(\frac{1}{K^2}\right).
\]
Moreover,
\[
\frac{\lambda_s}{\mu_s}=1+O\left(\frac{1}{K}\right),
\quad
\frac{\lambda_N}{\mu_N}=1+O\left(\frac{1}{K}\right).
\]
Combining these estimates,
\[
\frac{w_{N-1}}{w_s}
=
\Gamma_{N,s}\frac{\lambda_s}{\mu_s}
=
e^{-KH(x_1)}\left(1+O\left(\frac{1}{K}\right)\right).
\]
The identity $u_N^0-u_{N-1}^0=u_1^0w_{N-1}$ then yields (i).

For (ii), \eqref{Xi_n^0} gives the exact identity
\[
\Xi^0_{N-1}-\Xi^0_N
=\frac{w_{N-1}}{u_{N-1}^0u_N^0}
\sum_{p=1}^{N-1}\pi_p(u_p^0)^2.
\]
The part with $p\leq s$ is, by \eqref{eq:left-flux-bound}, at most
\[
Ku_1^0w_{N-1}
=O(1)\sqrt K e^{-KH(x_1)}.
\]
For $s<p<N-m$, we use
\[
(u_p^0)^2\le u_{N-1}^0u_N^0.
\]
Moreover, by Lemma~\ref{Gamma n m=sqrt},
\[
\pi_p w_{N-1}
=
\frac{\Gamma_{N,p}}{\mu_p}
\le
\frac{C}{K}
\exp\left\{
-K\left[
H\left(\frac{p}{K}\right)
-H\left(\frac{N}{K}\right)
\right]
\right\}.
\]
Here the prefactor is bounded by $C/K$ uniformly on the
interval under consideration. Since $H$ is decreasing on
$[x_1,x_2]$, for $s<p<N-m$,
\[
H\left(\frac{p}{K}\right)-H\left(\frac{N}{K}\right)
\ge
H\left(\frac{N-m}{K}\right)-H\left(\frac{N}{K}\right).
\]
Using $H'(x_2)=0$, $H''(x_2)=b_2>0$,
$N/K=x_2+O(1/K)$, and
$m=\lfloor\sqrt K\ln K\rfloor$, Taylor's formula gives
\[
K\left[
H\left(\frac{N-m}{K}\right)
-H\left(\frac{N}{K}\right)
\right]
=
\frac{b_2}{2}(\ln K)^2
+O\left(\frac{(\ln K)^3}{\sqrt K}\right).
\]
Consequently, for some fixed $c>0$,
\[
\sum_{p=s+1}^{N-m-1}\pi_p w_{N-1}
\le
Ce^{-c(\ln K)^2}.
\]
For $N-m\leq p<N$, Proposition~\ref{un2K=2+O} yields
\[
\frac{(u_p^0)^2}{u_{N-1}^0u_N^0}=1+O(r_K).
\]
On the same interval, Lemma~\ref{Gamma n m=sqrt}, Taylor expansion
at $x_2$, and $N/K=x_2+O(1/K)$ imply uniformly that
\begin{equation}\label{eq:local-pi-matching}
\pi_pw_{N-1}
=\frac1{Kx_2\widetilde{\lambda}(x_2)}
\exp\left(-\frac{b_2(p-N)^2}{2K}\right)(1+O(r_K)).
\end{equation}
The elementary Gaussian-sum estimate
\[
\sum_{q=1}^{m}\exp\left(-\frac{b_2q^2}{2K}\right)
=\sqrt{\frac{\pi K}{2b_2}}+O(1)
\]
now proves (ii). The omitted terms are smaller than its asserted
error because $H(x_1)>0$.

For (iii), write
\[
\Theta_N^0=w_{N-1}\sum_{p=N}^{\infty}\pi_p.
\]
For $N\leq p\leq N+m$, the estimate
\eqref{eq:local-pi-matching} remains valid, and the sum starting
at $q=0$ has the same leading Gaussian term. For $p>N+m$,
\[
\pi_pw_{N-1}
=\frac1{\mu_p}\prod_{j=N}^{p-1}\frac{\lambda_j}{\mu_j}
\leq\frac1{\mu_p}\prod_{j=N}^{N+m}\frac{\lambda_j}{\mu_j}.
\]
Here the omitted factors are at most one. By
Lemma~\ref{sum le M sum le CK} and a Taylor expansion at $x_2$,
\[
\sum_{p>N+m}\pi_pw_{N-1}
\leq M\prod_{j=N}^{N+m}\frac{\lambda_j}{\mu_j}
\leq O(1)\exp\bigl(-c(\ln K)^2\bigr).
\]
This proves (iii).
\end{proof}

Set $C_*:=\max\{C_L,C_R\}$ and
$I_K=[-(3C_*K)^{-1},(3C_*K)^{-1}]$. The two solutions agree up to
a multiplicative constant at $N-1$ and $N$ precisely when the
following matching function vanishes.

\begin{proposition}\label{matching n_2}
Write $N=n_2(K)$ and define, for $\rho\in I_K$,
\[
f(\rho)=u_{N-1}^0(1+\xi_{N-1}(\rho))(1+\theta_N(\rho))
-u_N^0(1+\xi_N(\rho)).
\]
For all sufficiently large $K$, this function has a positive zero.
Its smallest positive zero $\widetilde{\rho}_1$ satisfies
\[
\widetilde{\rho}_1
=\frac{x_2\widetilde{\lambda}(x_2)
\sqrt{-H''(x_1)H''(x_2)}}{2\pi}e^{-KH(x_1)}
\left(1+O\left(\frac{(\ln K)^3}{\sqrt K}\right)\right).
\]
\end{proposition}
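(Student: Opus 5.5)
We linearize $f$ around $\rho=0$ and locate its zero with an intermediate-value argument. Throughout, write $N=n_2(K)$, $\Delta:=u_N^0-u_{N-1}^0>0$, and $r_K=(\ln K)^3/\sqrt K$.

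\textbf{Step 1: value at zero.} Since $\xi(0)=0$ and $\theta(0)=0$, we have
\[
f(0)=u_{N-1}^0-u_N^0=-\Delta<0 .
\]
By Lemma~\ref{u-u,Theta,Xi,GK,pi}(i), $\Delta$ is of order $K^{-1/2}e^{-KH(x_1)}$. This is exponentially small.

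\textbf{Step 2: derivative on a small window.} $f$ is analytic on the interior of $I_K$, with
\[
f'(\rho)=u_{N-1}^0\,\xi_{N-1}'(\rho)\,(1+\theta_N(\rho))+u_{N-1}^0\,(1+\xi_{N-1}(\rho))\,\theta_N'(\rho)-u_N^0\,\xi_N'(\rho).
\]
Restrict to the window $|\rho|\le K^{-2}\delta_K$, where $\delta_K\to0$ slowly, say $\delta_K=r_K$. On this window:
\begin{itemize}
\item Lemmas~\ref{n1K<n<n2K} and \ref{n>n2K} give $\xi'=\Xi^0+O(K^2|\rho|)$, $\theta'=\Theta^0+O(K^2|\rho|)$, and $\|\xi\|_\infty,\|\theta\|_\infty=O(K|\rho|)$.
\item Proposition~\ref{un2K=2+O} gives $u_{N-1}^0,u_N^0=2+O(r_K)$.
\end{itemize}
Substituting these,
\[
f'(\rho)=u_{N-1}^0\Xi^0_{N-1}-u_N^0\Xi^0_N+u_{N-1}^0\Theta^0_N+O(K^2|\rho|)+O(K|\rho|)\cdot O(K).
\]
Rewrite the first two terms as
\[
u_{N-1}^0\Xi^0_{N-1}-u_N^0\Xi^0_N=u_{N-1}^0(\Xi^0_{N-1}-\Xi^0_N)-\Delta\,\Xi^0_N .
\]
The last term is $O(K)\Delta$, which is exponentially negligible. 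Now apply Lemma~\ref{u-u,Theta,Xi,GK,pi}(ii),(iii): each of $\Xi^0_{N-1}-\Xi^0_N$ and $\Theta^0_N$ equals $\frac{1}{x_2\widetilde\lambda(x_2)}\sqrt{\pi/(2KH''(x_2))}\,(1+O(r_K))$, and each is multiplied by $2+O(r_K)$. This yields
\[
f'(\rho)=\frac{4}{x_2\widetilde\lambda(x_2)}\sqrt{\frac{\pi}{2KH''(x_2)}}\,(1+O(r_K))+O(K^2|\rho|).
\]
On the window, $K^2|\rho|\le\delta_K$, while the main term is of order $K^{-1/2}$. So the error term must be $o(K^{-1/2})$ relative to it. To get this, shrink the window to $|\rho|\le K^{-5/2}r_K$; the error is then $O(K^{-1/2}r_K)$, which is fine. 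Call the main term $D_K\asymp K^{-1/2}$. Then $f'=D_K(1+O(r_K))$ on the window.

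\textbf{Step 3: locating the zero.} $\Delta/D_K$ is exponentially small, so the candidate $\rho^*=\Delta/D_K$ lies deep inside the window. By the mean value theorem, $f$ is strictly increasing on the window. Also $f(2\rho^*)>0$, while $f(0)<0$. Hence there is a unique zero in $(0,2\rho^*)$, and it satisfies
\[
\widetilde\rho_1=\frac{\Delta}{D_K}(1+O(r_K)).
\]
For this to be the \emph{smallest} positive zero, note $f<0$ on $[0,\widetilde\rho_1)$ by monotonicity, so no earlier zero exists.

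\textbf{Step 4: the constant.} Substituting $\Delta$ and $D_K$,
\[
\frac{\Delta}{D_K}=\sqrt{\frac{-2H''(x_1)}{\pi K}}\,e^{-KH(x_1)}\cdot\frac{x_2\widetilde\lambda(x_2)}{4}\sqrt{\frac{2KH''(x_2)}{\pi}}=\frac{x_2\widetilde\lambda(x_2)\sqrt{-H''(x_1)H''(x_2)}}{2\pi}\,e^{-KH(x_1)} .
\]
This is exactly the claimed formula.

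The main obstacle is Step 2: showing the derivative errors are relatively $O(r_K)$ against a main term of size only $K^{-1/2}$. The crude bound $\|\xi'-\Xi^0\|\le4(C_LK)^2|\rho|$ suffices only because the zero is exponentially small. One could alternatively use $\|\zeta''\|=O(K^2)$ with Taylor's formula about $0$.
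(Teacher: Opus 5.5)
Your proposal is correct and follows essentially the same route as the paper. Both arguments use $f(0)=-\Delta_K$ and the fact that $f'$ stays within a relative $O(r_K)$ of $D_K\asymp K^{-1/2}$ on an exponentially small interval, since the perturbation errors are only $O(K^2|\rho|)$. Monotonicity plus the intermediate value theorem then give a unique, hence smallest, positive zero near $\Delta_K/D_K$, and your computation of that constant is right.

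The one cosmetic difference is how you bound $f'(\rho)-f'(0)$. You use the first-derivative estimates $\|\xi'-\Xi^0\|_\infty\le 4(C_LK)^2|\rho|$ and $\|\theta'-\Theta^0\|_\infty\le 4(C_RK)^2|\rho|$ on the window $|\rho|\le K^{-5/2}r_K$. The paper instead uses $\sup_{I_K}|f''|=O(K^2)$ on $[0,2\Delta_K/D_K]$, which also yields the sharper but unneeded bound $|\widetilde\rho_1-\Delta_K/D_K|=O(K^{5/2}e^{-2KH(x_1)})$.
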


\begin{proof}
Let $\Delta_K=u_N^0-u_{N-1}^0>0$, and define
\[
G_K=u_N^0(\Theta_N^0+\Xi_{N-1}^0-\Xi_N^0),
\quad D_K=f'(0).
\]
Since $\xi(0)=\theta(0)=0$, $\xi'(0)=\Xi^0$, and
$\theta'(0)=\Theta^0$, we have
\[
f(0)=-\Delta_K,
\quad
D_K=G_K-\Delta_K(\Theta_N^0+\Xi_{N-1}^0).
\]
By Lemma~\ref{u-u,Theta,Xi,GK,pi}(ii) and (iii),
\[
\Theta_N^0+\Xi_{N-1}^0-\Xi_N^0
=
\frac{2}{x_2\widetilde\lambda(x_2)}
\sqrt{\frac{\pi}{2KH''(x_2)}}
\left(1+O\left(\frac{(\ln K)^3}{\sqrt K}\right)\right).
\]
Together with Proposition~\ref{un2K=2+O}, which gives
\[
u_N^0=2+O\left(\frac{(\ln K)^3}{\sqrt K}\right),
\]
this yields
\[
G_K=
\frac{4}{x_2\widetilde\lambda(x_2)}
\sqrt{\frac{\pi}{2KH''(x_2)}}
\left(1+O\left(\frac{(\ln K)^3}{\sqrt K}\right)\right).
\]
Moreover,
\[
|D_K-G_K|
=
\Delta_K|\Theta_N^0+\Xi_{N-1}^0|
\le O(1)K\Delta_K.
\]
Thus $D_K>0$ for all sufficiently large $K$, and both
$D_K$ and $G_K$ are of order $K^{-1/2}$. The first and second derivative bounds for $\xi$ and $\theta$,
together with their uniform boundedness, give
\[
\sup_{\rho\in I_K}|f''(\rho)|\leq O(1)K^2.
\]
Taylor's theorem yields, uniformly on $I_K$,
\[
f(\rho)=-\Delta_K+D_K\rho+O(1)K^2\rho^2,
\quad |f'(\rho)-D_K|\leq O(1)K^2|\rho|.
\]
Put
\[
R_K=\frac{2\Delta_K}{D_K}.
\]
By Lemma~\ref{u-u,Theta,Xi,GK,pi}(i) and the preceding estimate for $D_K$,
\[
\Delta_K\asymp K^{-\frac{1}{2}}e^{-KH(x_1)},
\quad
D_K\asymp K^{-\frac{1}{2}}.
\]
Hence
\[
R_K=O(1)e^{-KH(x_1)}.
\]
In particular, $[0,R_K]\subset I_K$ for all sufficiently large
$K$. Moreover, since $H(x_1)>0$,
\[
\frac{K^2R_K^2}{\Delta_K}
+
\frac{K^2R_K}{D_K}
=
O\left(1\right)K^{\frac{5}{2}}e^{-KH(x_1)}
\to0.
\]
Combining these bounds,
\[
f(R_K)=\Delta_K+O(1)K^2R_K^2>0,
\quad
\inf_{0\le\rho\le R_K}f'(\rho)
\ge\frac{D_K}{2}>0
\]
for all sufficiently large $K$.
Since $f(0)<0$, $f(R_K)>0$, and $f'(\rho)>0$ on
$[0,R_K]$, the intermediate value theorem gives a unique zero
$\widetilde\rho_1\in(0,R_K)$. In particular, $f$ has no zero in
$(0,\widetilde\rho_1)$, which implies that $\widetilde\rho_1$ is the
smallest positive zero.
At this zero, the Taylor estimate gives
\[
\left|\widetilde{\rho}_1-\frac{\Delta_K}{D_K}\right|
\leq O(1) K^{\frac{5}{2}}e^{-2KH(x_1)}.
\]
Replacing $D_K$ by $G_K$ changes the quotient by at most
$O(1)K^{\frac{3}{2}}e^{-2KH(x_1)}$. Finally, inserting the asymptotic estimates
for $\Delta_K$ and $G_K$ yields
\[
\frac{\Delta_K}{G_K}
=\frac{x_2\widetilde{\lambda}(x_2)
\sqrt{-H''(x_1)H''(x_2)}}{2\pi}e^{-KH(x_1)}
\left(1+O\left(\frac{(\ln K)^3}{\sqrt K}\right)\right).
\]
Since $H(x_1)>0$, the preceding remainders are exponentially
smaller than $e^{-KH(x_1)}\frac{(\ln K)^3}{\sqrt K}.$
They are absorbed into the stated relative error, which proves the proposition.
\end{proof}

We are now ready to prove Theorem \ref{theorem of principal eigenvalue and eigenfunction}.

\begin{proof}[Proof of Theorem \ref{theorem of principal eigenvalue and eigenfunction}]
Define the sequences $\widetilde{\phi}=(\widetilde{\phi}_n)_{n\ge1}$
and $\phi=(\phi_n)_{n\ge1}$ by 
\begin{equation}\label{phi n}
\widetilde{\phi}_n=
\begin{cases}
  u_n^0(1+\eta_n(\widetilde{\rho}_1)), & \mbox{for } 1\le n\le n_1(K), \\
  u_n^0(1+\xi_n(\widetilde{\rho}_1)), & \mbox{for } n_1(K)\le n\le n_2(K), \\
  b(1+\theta_n(\widetilde{\rho}_1)), & \mbox{for } n\ge n_2(K),
\end{cases}
\quad\text{and}\ \phi_n:=\frac{\widetilde{\phi}_n}{\widetilde{\phi}_{n_1(K)}},
\end{equation}
where $\eta_n(\widetilde{\rho}_1)$, $\xi_n(\widetilde{\rho}_1)$ and $\theta_n(\widetilde{\rho}_1)$ are defined in Lemmas \ref{v n-u 0 n 1lenlen1K} and \ref{n>n2K}, respectively, and
\[
b=\frac{u_{n_2(K)}^0(1+\xi_{n_2(K)}(\widetilde{\rho}_1))}{1+\theta_{n_2(K)}(\widetilde{\rho}_1)}.
\]

The two construction lemmas imply that $\widetilde{\phi}$ is bounded.
Since $\sum\limits_{n\ge1}\pi_n<\infty$, it belongs to $\ell^2(\pi)$.
The matching conditions imply that, for $n\ge1$,
\[
\lambda_n\widetilde{\phi}_{n+1}+\mu_n\widetilde{\phi}_{n-1}\mathbbm{1}_{\{n\ge2\}}-(\lambda_n+\mu_n)\widetilde{\phi}_n=-\widetilde{\rho}_1\widetilde{\phi}_n.
\]
Consider the sequence $(\widetilde{\phi}^{(k)})_{k\ge1}$ of elements in $\ell^2(\pi)$ defined by $\widetilde{\phi}^{(k)}_n=\widetilde{\phi}_n\mathbbm{1}_{\{n\le k\}}$. Note that for all $k\ge1$, $\widetilde{\phi}^{(k)}\in\mathfrak{D}$. A direct computation gives
\[
(L\widetilde{\phi}^{(k)})_n+\widetilde{\rho}_1\widetilde{\phi}^{(k)}_n=
\begin{cases}
  0,& \mbox{for } n<k, \\
  -\lambda_k\widetilde{\phi}_{k+1},& \mbox{for } n=k, \\
  \mu_{k+1}\widetilde{\phi}_{k},& \mbox{for } n=k+1,\\
  0,& \mbox{for } n>k+1.
\end{cases}
\]
Since $\widetilde\phi$ is bounded and
$\sum\limits_{n\ge1}\pi_n<\infty$,
\[
\|\widetilde\phi-\widetilde\phi^{(k)}\|_\pi
\to0.
\]
Moreover,
\[
\|(L+\widetilde\rho_1)\widetilde\phi^{(k)}\|_\pi^2
\le
\|\widetilde\phi\|_\infty^2
\left(
\lambda_k^2\pi_k+\mu_{k+1}^2\pi_{k+1}
\right).
\]
By \eqref{mun2pin} and $\lambda_k/\mu_k\to0$,
\[
\lambda_k^2\pi_k
=
\left(\frac{\lambda_k}{\mu_k}\right)^2
\mu_k^2\pi_k
\to0,
\quad
\mu_{k+1}^2\pi_{k+1}\to0.
\]
Consequently,
\[
\|(L+\widetilde\rho_1)\widetilde\phi^{(k)}\|_\pi
\to0.
\]
Since $L$ is closed, it follows that
$\widetilde\phi\in D(L)$ and
\[
L\widetilde\phi
=
-\widetilde\rho_1\widetilde\phi.
\]

Let $\psi$ be a strictly positive eigenvector associated with the
principal eigenvalue $-\rho_1$, whose existence follows from
Lemma~\ref{operatorL}. By Lemmas
\ref{v n-u 0 n 1lenlen1K}-\ref{n>n2K} and the definition of the matching constant
$b$, all components of $\widetilde\phi$ are strictly positive. This gives
\[
\langle\psi,\widetilde{\phi}\rangle_\pi>0.
\]
Since $L$ is self-adjoint, eigenvectors corresponding to distinct
eigenvalues are orthogonal. It follows that
\[
\widetilde{\rho}_1=\rho_1.
\]

Consequently, the sequence $\phi$ defined in \eqref{phi n} is the
strictly positive principal eigenvector normalized by
$\phi_{n_1(K)}=1$.

By Proposition~\ref{matching n_2}, for all sufficiently large $K$,
\[
\rho_1(K)
=
\frac{
x_2\widetilde{\lambda}(x_2)
\sqrt{-H''(x_1)H''(x_2)}
}{2\pi}
\exp\bigl(-KH(x_1)\bigr)
\left(
1+
O\left(
\frac{(\ln K)^3}{\sqrt K}
\right)
\right).
\]

It remains to establish the estimate for the normalized eigenvector
$\phi$. For $1\le n\le N$, Lemma~\ref{v n-u 0 n 1lenlen1K} gives
\[
\widetilde\phi_n
=
u_n^0\bigl(1+O(1)K\rho_1\bigr)
=
\Phi_n\bigl(1+O(1)K\rho_1\bigr)
\]
uniformly in $n$. For $n\ge N$, the matching constant satisfies
\[
\frac{b}{u_N^0}
=
\frac{1+\xi_N(\widetilde\rho_1)}
{1+\theta_N(\widetilde\rho_1)}
=
1+O(1) K\rho_1.
\]
Meanwhile, Lemma~\ref{n>n2K} gives
\[
1+\theta_n(\widetilde\rho_1)
=
1+O(1)K\rho_1
\]
uniformly in $n\ge N$. Since $\Phi_n=u_N^0$ on this range, we obtain
\[
\widetilde\phi_n
=
\Phi_n\bigl(1+O(1)K\rho_1\bigr)
\]
uniformly for $n\ge N$. Then there exist quantities $\varepsilon_n(K)$ such that
\[
\widetilde\phi_n
=
\Phi_n(K)\bigl(1+\varepsilon_n(K)\bigr),
\quad n\ge1,
\]
with
\[
\sup_{n\ge1}|\varepsilon_n(K)|
\le O(1) \rho_1(K)K
\]
for all sufficiently large $K$.

The construction gives $\eta_{n_1(K)}=0$ and
$u^0_{n_1(K)}=1$, which yields $\widetilde{\phi}_{n_1(K)}=1$.
This implies that $\phi=\widetilde{\phi}$, and
\[
|\phi_n(K)-\Phi_n(K)|\le O(1)\rho_1(K)K\Phi_n(K)
\]
uniformly for $n\ge1$. Finally, since $u_n^0$ is increasing and Proposition~\ref{un2K=2+O} gives
\[
u_N^0
=
2+O\left(\frac{(\ln K)^3}{\sqrt K}\right),
\]
the definition of $\Phi$ implies
\[
0<\Phi_n(K)\le u_N^0\le O(1)
\]
uniformly for $n\ge1$ and all sufficiently large $K$.
Then
\[
\sup_{n\ge1}
|\phi_n(K)-\Phi_n(K)|
\le O(1)\rho_1(K)K.
\]
This completes the proof.
\end{proof}

The same constructions also separate the remaining spectrum from the
exponentially small principal eigenvalue. The following estimate is
sufficient for the time-scale comparisons below.

\begin{lemma}[A lower bound for the remaining spectrum]
\label{lem:coarse-spectral-separation}
There exists a constant $c_{\mathrm{sp}}>0$, independent of $K$, such
that for all sufficiently large $K$,
\begin{equation}\label{eq:coarse-spectral-separation}
\rho_2(K)\geq\frac{c_{\mathrm{sp}}}{K},
\quad
\rho_2(K)-\rho_1(K)\geq\frac{c_{\mathrm{sp}}}{2K}.
\end{equation}
\end{lemma}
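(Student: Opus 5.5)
The plan is to obtain the lower bound on $\rho_2$ from the min--max principle by imposing one extra Dirichlet condition, and then to bound the bottom of each resulting decoupled piece by a mean exit time of order $K$. The second inequality in \eqref{eq:coarse-spectral-separation} then follows at once: Theorem~\ref{theorem of principal eigenvalue and eigenfunction} gives $\rho_1(K)=O(e^{-KH(x_1)})$, which is smaller than $c_{\mathrm{sp}}/(2K)$ for large $K$.

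\emph{Step 1 (interlacing).} Let $\mathcal{E}(u)=|u_1|^2+\sum_{n\ge1}\lambda_n\pi_n|u_{n+1}-u_n|^2$ be the closed form of $-L$ from Lemma~\ref{operatorL}. Fix $k:=n_2(K)$. By min--max, $\rho_2$ is the infimum, over two-dimensional subspaces $V$ of the form domain, of $\max_{u\in V\setminus\{0\}}\mathcal{E}(u)/\|u\|_\pi^2$. Every such $V$ contains a nonzero $u$ with $u_k=0$. Hence
\[
\rho_2\ \ge\ \lambda^{(k)}:=\inf\bigl\{\mathcal{E}(u)/\|u\|_\pi^2:\ u\neq0,\ u_k=0\bigr\}.
\]
On $\{u_k=0\}$ the form splits as a direct sum. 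One part is the form on $\{1,\dots,k-1\}$, killed at $0$ and at $k$. The other is the form on $\{k+1,k+2,\dots\}$, killed at $k$. Thus $\lambda^{(k)}=\min\{\lambda^{\mathrm{left}},\lambda^{\mathrm{right}}\}$, the minimum of the two principal Dirichlet eigenvalues.

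\emph{Step 2 (Barta-type bound).} For each piece $A$, let $\tau_A$ be the exit time from $A$. Set $h_n=\mathbb{E}_n[\tau_A]$; it is positive and solves $-Lh=1$ on $A$ with zero boundary data. A ground-state argument then gives $\lambda^{A}\ge 1/\sup_{n\in A}h_n$. Concretely, either pair the positive Dirichlet ground state $\psi$ with $h$ in $\ell^2(\pi)$ and use self-adjointness, which gives $\lambda^{A}\langle \psi,h\rangle_\pi=\langle\psi,\mathbbm{1}\rangle_\pi$; or, for the infinite piece, apply Barta's inequality to the positive supersolution $h$. So it suffices to show $\sup_A h\le CK$ for both pieces.

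\emph{Step 3 (exit-time bounds).} For the right piece, the standard birth--death formula gives
\[
\mathbb{E}_n[T_k]=\sum_{j=k}^{n-1}\frac1{\lambda_j\pi_j}\sum_{p>j}\pi_p,
\]
which is at most $(B\mathbbm{1})_n\le\|B\|_\infty\le C_RK$ by the operator bound in the proof of Lemma~\ref{n>n2K}. For the left piece, I would write the two-sided Dirichlet Green function on $\{0,\dots,k\}$ in terms of the harmonic function $q_n=u_n^0/u_k^0$ from \eqref{eq:alpha-hitting}:
\[
\mathbb{E}_n[T_0\wedge T_k]=\sum_{p=1}^{k-1}g(n,p)\,\pi_p,
\quad
g(n,p)=\frac{u^0_{\min(n,p)}\bigl(u^0_k-u^0_{\max(n,p)}\bigr)}{u_1^0\,u_k^0}.
\]
I would then bound this sum exactly as $\|\mathcal{T}\|_\infty\le C_LK$ was bounded in Lemma~\ref{v n-u 0 n 1lenlen1K}. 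For $p\le n_1(K)$, use \eqref{eq:left-weight-bound}, i.e.\ $\pi_pu_p^0/u_1^0\le 1/\widetilde\lambda(0)$, summed over $O(K)$ states. For $n_1(K)<p<k$, the factor $u^0_k-u^0_{\max(n,p)}=u_1^0\sum_{j\ge\max(n,p)}w_j$ together with the monotonicity of $w$ on $(n_1,n_2)$ gives $\pi_p\sum_{j\ge p}w_j\le (k-p)\pi_pw_p=(k-p)/\lambda_p=O(1)$. Summing over $O(K)$ values of $p$ again gives $O(K)$.

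\emph{Main obstacle.} I expect the main difficulty to be this left-piece Green-function estimate. It must hold uniformly in $n$, including $n$ near $n_1(K)$, where $g(n,\cdot)$ is not small, and near the rounding endpoints, where the monotonicity of $w$ holds only up to an off-by-one error. I would handle these endpoints by the same $O(1)$ adjustments used in Lemma~\ref{v n-u 0 n 1lenlen1K}. The final constant is $c_{\mathrm{sp}}=1/\max\{C,C_R\}$.
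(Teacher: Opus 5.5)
Your proof is correct, and it takes a different route from the paper's. The paper does not use min--max. Instead it shows that any eigenvalue $\rho\in(0,(3C_{\mathrm{gap}}K)^{-1}]$ of $-L$ has a strictly positive eigenvector. Such an eigenvector cannot be orthogonal to $\phi$, so $\rho=\rho_1$.

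\begin{itemize}
\item On $\{1,\dots,n_2(K)\}$ the paper writes $u=u^0y$ and derives $y=y_s\mathbbm{1}+\rho\mathcal{T}y$. Positivity then follows from the Neumann series and $\|\mathcal{T}\|_\infty\le C_LK$.
\item On the tail it proves a Hardy-type coercivity of the Dirichlet operator $\mathcal{H}_R$, with constant $S_R(K)\le C_RK$. It uses this to identify the tail of $u$ with the positive solution $b(1+\theta(\rho))$ built beyond $n_2(K)$. That step requires checking that both sequences lie in $\mathcal{D}(\mathcal{H}_R)$.
\end{itemize}

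The two $O(K)$ quantities are the same in both arguments. Your right-hand exit time $\sup_{n>k}\mathbb{E}_n[T_k]$ is essentially $S_R(K)$. Your two-sided Green-function sum plays the role of $\|\mathcal{T}\|_\infty$: it uses the same split at $p=n_1(K)$, the same left weight bound $\pi_pu^0_p/u^0_1\le 1/\widetilde{\lambda}(0)$, and the same monotonicity of $w$.

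The paper's route reuses its eigenvector-construction machinery. Yours is the classical argument: a rank-one Dirichlet decoupling plus $\lambda^A\ge 1/\sup_A\mathbb{E}[\tau_A]$. It is self-contained, needs neither the matching construction nor the tail domain bookkeeping, and avoids the factor $1/3$. It also gives the transparent bound
\[
\rho_2^{-1}\le\max\Bigl\{\sup_n\mathbb{E}_n[T_0\wedge T_k],\ \sup_{n>k}\mathbb{E}_n[T_k]\Bigr\}.
\]

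The obstacle you anticipate does not arise. Since $w_{j+1}/w_j=\widetilde{\mu}((j+1)/K)/\widetilde{\lambda}((j+1)/K)$, the sequence $w$ (with $w_0=1$) is exactly nondecreasing on $\{0,\dots,n_1(K)\}$ and nonincreasing on $\{n_1(K),\dots,n_2(K)\}$. No rounding correction is needed. Your bounds $g(n,p)\pi_p\le\pi_pu_p^0/u_1^0$ for $p\le n_1(K)$ and $g(n,p)\pi_p\le\pi_p\sum_{j=p}^{k-1}w_j$ for $p>n_1(K)$ do not depend on $n$, so uniformity is automatic.

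For the infinite piece, you can avoid any domain question in the Barta step by using Cauchy--Schwarz directly:
\[
|v_n|^2\le\Bigl(\sum_{j=k}^{n-1}(\lambda_j\pi_j)^{-1}\Bigr)\,\mathcal{E}_R(v).
\]
This holds for every finite-energy $v$ with $v_k=0$ and gives $\lambda^{\mathrm{right}}\ge(\sup_{n>k}\mathbb{E}_n[T_k])^{-1}$ immediately.
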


\begin{proof}
Write $s=n_1(K)$ and $N=n_2(K)$. Let $C_L$ and $C_R$ be the
constants in Lemmas~\ref{v n-u 0 n 1lenlen1K} and~\ref{n>n2K}.
The estimate for the operator $B$ in the proof of the latter lemma
also gives
\[
S_R(K):=\sum_{j=N-1}^{\infty}\frac1{\lambda_j\pi_j}
\sum_{p=j+1}^{\infty}\pi_p\leq C_RK,
\]
after increasing $C_R$ if necessary. Fix
$C_{\mathrm{gap}}\geq\max\{1,C_L,C_R\}$, independent of $K$.

First consider the Dirichlet operator $\mathcal{H}_R$ on
$\ell^2(\{N,N+1,\ldots\},\pi)$, defined on finitely supported
sequences by
\[
(\mathcal{H}_Rv)_n
=(\lambda_n+\mu_n)v_n-\lambda_nv_{n+1}
-\mu_nv_{n-1}\mathbbm{1}_{\{n>N\}},\quad n\geq N,
\]
and closed in this Hilbert space. Thus the boundary value is
$v_{N-1}=0$. The argument in Lemma~\ref{operatorL}, applied to the
tail interval, shows that $\mathcal{H}_R$ is self-adjoint, that
finitely supported sequences form an operator core, and that
\[
\mathcal{D}(\mathcal{H}_R)
=\left\{v\in\ell^2(\{N,N+1,\ldots\},\pi):
\sum_{n=N}^{\infty}\pi_n(\lambda_n+\mu_n)^2|v_n|^2<\infty\right\}.
\]
Indeed, deleting the first $N-1$ coordinates leaves unchanged the
vanishing relative bound of the off-diagonal part with respect to
the diagonal multiplication operator. For finitely supported $v$,
detailed balance gives
\[
\langle v,\mathcal{H}_Rv\rangle_{\pi,R}
=\mathcal{E}_R(v)
:=\sum_{j=N-1}^{\infty}\lambda_j\pi_j
|v_{j+1}-v_j|^2,
\quad v_{N-1}=0,
\]
where the inner product and norm with subscript $\pi,R$ are taken
over $\{N,N+1,\ldots\}$. By Cauchy--Schwarz,
\[
|v_n|^2
=\left|\sum_{j=N-1}^{n-1}(v_{j+1}-v_j)\right|^2
\leq\left(\sum_{j=N-1}^{n-1}\frac1{\lambda_j\pi_j}\right)
\mathcal{E}_R(v),\quad n\geq N.
\]
Multiplying by $\pi_n$, summing over $n\ge N$, and interchanging
the nonnegative sums, we obtain
\[
\begin{aligned}
	\|v\|_{\pi,R}^2
	&\le
	E_R(v)
	\sum_{n=N}^\infty
	\pi_n\sum_{j=N-1}^{n-1}\frac1{\lambda_j\pi_j}=
	E_R(v)
	\sum_{j=N-1}^\infty
	\frac1{\lambda_j\pi_j}
	\sum_{n=j+1}^\infty\pi_n=
	S_R(K)E_R(v)
	\le C_{\rm gap}K E_R(v).
\end{aligned}
\]
Passing to the operator core closure proves
\begin{equation}\label{eq:tail-dirichlet-coercivity}
\langle v,\mathcal{H}_Rv\rangle_{\pi,R}
\geq\frac1{C_{\mathrm{gap}}K}\|v\|_{\pi,R}^2,
\quad v\in\mathcal{D}(\mathcal{H}_R).
\end{equation}

It remains to verify that every eigenvalue $\rho$ of $-L$ satisfying
\[
0<\rho\leq\frac1{3C_{\mathrm{gap}}K}
\]
must equal $\rho_1(K)$. Choose a nonzero real eigenvector $u$ with
$Lu=-\rho u$, and set $y_n=u_n/u_n^0$ for $1\leq n\leq N$.
Use the zero-energy recurrence for $u^0$ and detailed balance,
and sum the resulting flux identity from $1$ to $n$, this yields
\[
\lambda_n\pi_nu_n^0u_{n+1}^0(y_n-y_{n+1})
=\rho\sum_{p=1}^{n}\pi_p(u_p^0)^2y_p,
\quad 1\leq n<N.
\]
There is no flux term at $0$, because both recurrences have the
absorbing boundary value zero. By the definition of the operator $\mathcal{T}$ in Lemma~\ref{v n-u 0 n 1lenlen1K}, summing this identity from the normalization index $s$ gives
\[
y=y_s\mathbbm{1}+\rho\mathcal{T}y,\quad\text{on }\{1,\ldots,N\},
\]
where $\mathcal{T}$ is precisely the finite-interval operator in the
proof of Lemma~\ref{v n-u 0 n 1lenlen1K}. Since
$\rho\|\mathcal{T}\|\leq1/3$, the operator $I-\rho\mathcal{T}$
is invertible. Hence
\[
y=y_s(I-\rho T)^{-1}\mathbbm{1}.
\]
If $y_s=0$, then $y_n=0$ for $1\le n\le N$. Hence,
$u_n=0$ on this interval. The second-order recurrence, together
with $\lambda_n>0$, then implies successively that $u_n=0$ for
all $n>N$, contradicting the assumption that $u$ is a nonzero
eigenvector. Thus $y_s\ne0$. Change the sign of $u$ if necessary, so that $y_s>0$.
The same lemma now gives
\[
y=y_s(I-\rho\mathcal{T})^{-1}\mathbbm{1}
=y_s(\mathbbm{1}+\zeta(\rho)),
\quad \|\zeta(\rho)\|_\infty\leq\frac12.
\]
Consequently, $u_n>0$ for $1\leq n\leq N$.

Put $b=u_{N-1}>0$. Lemma~\ref{n>n2K} constructs the bounded
positive sequence
\[
g_n=b(1+\theta_n(\rho)),\quad n\geq N-1,
\quad g_{N-1}=b,
\]
which satisfies the eigenvalue recurrence for $n\geq N$.
Its tail belongs to $\ell^2(\{N,N+1,\ldots\},\pi)$ by
\eqref{TwoSums}. We verify its membership in
$\mathcal{D}(\mathcal{H}_R)$ before invoking uniqueness. Let
$g^{(k)}_n=g_n\mathbbm{1}_{\{n\leq k\}}$ for $n\geq N$ and
$k>N$, and let $e_N$ denote the sequence supported at $N$ with
value one there. The recurrence gives
\[
\bigl\|(\mathcal{H}_R-\rho)g^{(k)}-\mu_Nb e_N\bigr\|_{\pi,R}^2
\leq\|g\|_\infty^2
\bigl(\lambda_k^2\pi_k+\mu_{k+1}^2\pi_{k+1}\bigr)
\to0,
\]
where the limit is for fixed $K$, and it follows from \eqref{mun2pin}
and $\lambda_k/\mu_k\to0$. Since $g^{(k)}\to g$ in the tail
Hilbert space and $\mathcal{H}_R$ is closed, its tail lies in
$\mathcal{D}(\mathcal{H}_R)$ and satisfies
\[
(\mathcal{H}_R-\rho)g=\mu_Nb e_N.
\]
By the domain characterization \eqref{eq:generator-domain},
the restriction $u_R=(u_n)_{n\geq N}$ also belongs to
$\mathcal{D}(\mathcal{H}_R)$, and the full recurrence gives
\[
(\mathcal{H}_R-\rho)u_R=\mu_Nb e_N.
\]
Then $u_R-g\in\mathcal{D}(\mathcal{H}_R)$ and
$(\mathcal{H}_R-\rho)(u_R-g)=0$. 
Taking the inner product with $u_R-g$ and using \eqref{eq:tail-dirichlet-coercivity}, we obtain
\[
\begin{aligned}
	0
	&=
	\langle u_R-g,(H_R-\rho)(u_R-g)\rangle_{\pi,R}\\
	&=
	\langle u_R-g,H_R(u_R-g)\rangle_{\pi,R}
	-\rho\|u_R-g\|_{\pi,R}^2\\
	&\ge
	\left(
	\frac1{C_{\rm gap}K}-\rho
	\right)
	\|u_R-g\|_{\pi,R}^2.
\end{aligned}
\]
Since
\[
\rho\le\frac1{3C_{\rm gap}K}
<
\frac1{C_{\rm gap}K},
\]
the coefficient on the right-hand side is strictly positive.
Therefore
\[
u_R=g.
\]
Thus $u$ is strictly positive on the entire state space. Let
$\phi$ denote the strictly positive principal eigenvector.
Then
\[
\langle u,\phi\rangle_\pi>0.
\]
Since $L$ is self-adjoint (see Lemma~\ref{operatorL}), eigenvectors corresponding to distinct eigenvalues are orthogonal. This yields $\rho=\rho_1(K)$.

Consequently, every eigenvalue $\rho$ satisfying
\[
0<\rho\le\frac1{3C_{\rm gap}K}
\]
must coincide with $\rho_1(K)$. Since the principal eigenvalue is
simple,
\[
\rho_2(K)\ge\frac1{3C_{\rm gap}K}.
\]
Finally, Theorem~\ref{theorem of principal eigenvalue and eigenfunction}
and $H(x_1)>0$ imply that
$\rho_1(K)\leq(6C_{\mathrm{gap}}K)^{-1}$ for all sufficiently
large $K$. Taking $c_{\mathrm{sp}}=(3C_{\mathrm{gap}})^{-1}$
proves \eqref{eq:coarse-spectral-separation}.
\end{proof}

\section{Quasi-stationary distribution and multiscale convergence}\label{QSD and Gauss}

We begin by characterizing the unique quasi-stationary
distribution in terms of the positive principal eigenvector. We next
derive a pointwise spectral estimate, control the coefficient of
the principal projection, and use hitting times to obtain uniform
bounds for the killed semigroup. Combining these ingredients proves
Theorem~\ref{theorem of TV}.

\subsection{The quasi-stationary distribution and proof of Theorem \ref{UniqueQSD}}

We use the probability and expectation notation introduced in
Section~\ref{subsec:notation}. Theorem~\ref{UniqueQSD} follows from
the principal eigenvector identity and the coming-down-from-infinity
criterion.

\begin{proof}[Proof of Theorem \ref{UniqueQSD}]
By Theorem~\ref{theorem of principal eigenvalue and eigenfunction}, $\phi$ is strictly positive and uniformly
bounded. Together with \eqref{TwoSums}, this gives
\[
0<\langle\phi,\mathbf 1\rangle_\pi
=\sum_{n\ge1}\pi_n\phi_n<\infty.
\]
Hence
\[
\nu_n=\frac{\pi_n\phi_n}
{\langle\phi,\mathbf 1\rangle_\pi},
\quad n\in\mathbb N^*,
\]
which defines a probability measure on $\mathbb N^*$.
For $A\subseteq\mathbb{N}^*$, self-adjointness and
$e^{tL}\phi=e^{-\rho_1t}\phi$ give
\[
\begin{aligned}
\mathbb{P}_\nu(X_t^K\in A,\ T_0>t)
&=\frac{\langle\phi,e^{tL}\mathbbm{1}_A\rangle_\pi}
{\langle\phi,\mathbbm{1}\rangle_\pi}=e^{-\rho_1t}\frac{\langle\phi,\mathbbm{1}_A\rangle_\pi}
{\langle\phi,\mathbbm{1}\rangle_\pi}
=e^{-\rho_1t}\nu(A).
\end{aligned}
\]
Taking $A=\mathbb{N}^*$ yields
\begin{equation}\label{PT0>t=e -rho1t}
\mathbb{P}_\nu(T_0>t)=e^{-\rho_1t}.
\end{equation}
Dividing the preceding identity by this survival probability proves quasi-stationarity. Finally, \eqref{QSDunique}, together with \cite[Theorem~3.2(ii)]{van1991quasi}, yields uniqueness of the quasi-stationary distribution.
\end{proof}

Having identified the unique quasi-stationary distribution, we next establish the spectral estimates for the killed semigroup that will be used in the proof of the multiscale convergence result.

\subsection{Spectral estimates for the killed semigroup}

Recall the potential normalization introduced in Section~\ref{subsec:notation}: 
\[
F(x)=H(0)-H(x)=-\int_0^x\ln \frac{\widetilde{\mu}(s)}{\widetilde{\lambda}(s)}\,ds.
\]
It follows that $F(0)=0$, $F'(x_1)=F'(x_2)=0$, $F''(x_1)>0$ and $F''(x_2)<0$. By assumption \textbf{(H)}, the function $F$ is three times
differentiable and satisfies
\[
\sup_{x\in\mathbb{R}_+}(1+x^2)|F'''(x)|<\infty.
\]

Recall that $\phi$ is the strictly positive principal eigenvector
associated with $-\rho_1$, normalized by
$\phi_{n_1(K)}=1$. We will use the properties of $\pi=(\pi_{n})$ frequently in this section (see Lemmas \ref{sum pi n F(x 2)>0} and \ref{sum pi n F(x 2)le0} in Appendix). We begin with a pointwise estimate obtained from the spectral decomposition of the killed semigroup.

\begin{proposition}\label{proposition7.2}
For all sufficiently large $K$, every $t\ge0$ and $n\in\mathbb{N}^*$, 
\[
\sup_{A\subseteq\mathbb{N}^*}\Bigg|P_t(n,A)-e^{-\rho_1t}\phi_n\frac{\langle\phi,\mathbbm{1}\rangle_{\pi}}{\|\phi\|_{\pi}^2}\nu(A)\Bigg| 
\le R(t,n,K,\rho_2),
\]
where
\[
R(t,n,K,\rho_2)=
\begin{cases}
\sqrt{\frac{C_1}{\pi_n}}K^{-\frac{1}{4}}e^{-\rho_2t}e^{\frac{c}{2}K},&\quad F(x_2)>0,\\
\sqrt{\frac{C_2}{\pi_n}}e^{-\rho_2t},&\quad F(x_2)\le0,
\end{cases}
\]
$c=F(x_2)$, $C_1>0$ is given in Lemma \ref{sum pi n F(x 2)>0}, and $C_2>0$ is given in Lemma \ref{sum pi n F(x 2)le0}.
\end{proposition}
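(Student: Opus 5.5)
Our approach is the spectral decomposition of the self-adjoint killed semigroup in $\ell^2(\pi)$ from Lemma~\ref{operatorL}. Let $\Pi_1 f:=\langle\phi,f\rangle_\pi\phi/\|\phi\|_\pi^2$ be the orthogonal projection onto the principal eigenspace. For $A\subseteq\mathbb{N}^*$, the indicator $\mathbbm{1}_A$ lies in $\ell^2(\pi)$ because $S_K<\infty$. Since $e^{tL}$ is self-adjoint, identity \eqref{eq:killed-semigroup-identification} gives
\[
P_t(n,A)=(e^{tL}\mathbbm{1}_A)(n)=e^{-\rho_1t}(\Pi_1\mathbbm{1}_A)(n)+\bigl(e^{tL}(I-\Pi_1)\mathbbm{1}_A\bigr)(n).
\]
The first term equals
\[
e^{-\rho_1t}\phi_n\frac{\langle\phi,\mathbbm{1}_A\rangle_\pi}{\|\phi\|_\pi^2}=e^{-\rho_1t}\phi_n\frac{\langle\phi,\mathbbm{1}\rangle_\pi}{\|\phi\|_\pi^2}\nu(A),
\]
because $\nu(A)=\langle\phi,\mathbbm{1}_A\rangle_\pi/\langle\phi,\mathbbm{1}\rangle_\pi$ by Theorem~\ref{UniqueQSD}. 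So it remains to bound the remainder $g:=e^{tL}(I-\Pi_1)\mathbbm{1}_A$ pointwise, uniformly in $A$.

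For the pointwise bound we pass from the weighted norm to a single coordinate. For any $g\in\ell^2(\pi)$,
\[
\pi_n|g_n|^2\le\|g\|_\pi^2,\qquad\text{so}\qquad |g_n|\le\pi_n^{-1/2}\|g\|_\pi .
\]
The spectrum of $L$ restricted to the range of $I-\Pi_1$ lies in $(-\infty,-\rho_2]$, since the spectrum is discrete and $-\rho_1$ is simple. Hence
\[
\|g\|_\pi\le e^{-\rho_2t}\|(I-\Pi_1)\mathbbm{1}_A\|_\pi\le e^{-\rho_2t}\|\mathbbm{1}_A\|_\pi\le e^{-\rho_2t}S_K^{1/2}.
\]
This gives
\[
\sup_A\bigl|P_t(n,A)-e^{-\rho_1t}\phi_n\tfrac{\langle\phi,\mathbbm{1}\rangle_\pi}{\|\phi\|_\pi^2}\nu(A)\bigr|\le\Bigl(\frac{S_K}{\pi_n}\Bigr)^{1/2}e^{-\rho_2t}.
\]

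The last step, and the only one with real content, is to estimate the total mass $S_K=\sum_{j\ge1}\pi_j$ through the appendix Laplace estimates. We will invoke Lemma~\ref{sum pi n F(x 2)>0} in the case $F(x_2)>0$. The weights satisfy $\pi_j\approx\frac{1}{\mu_j}e^{KF(j/K)}$ up to bounded factors, and the maximum of $F$ over the relevant range is attained at $x_2$, a nondegenerate maximum of width $\sqrt K$, while $\mu_j\asymp K$ there. This yields
\[
S_K\le C_1K^{-1/2}e^{cK},\qquad c=F(x_2),
\]
and taking the square root produces $K^{-1/4}e^{\frac c2K}$. In the case $F(x_2)\le0$, the dominant contribution comes from small $j$, where $\pi_j$ is bounded by $1/\mu_1$ times a summable geometric factor, since $\widetilde\lambda(0)<\widetilde\mu(0)$. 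Lemma~\ref{sum pi n F(x 2)le0} then gives $S_K\le C_2$.

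The main obstacle is therefore only the uniform estimate of $S_K$, including the tail $j>n_{\frac12}(K)$ controlled by (A3). We take this from the appendix lemmas as stated. Everything else is the standard spectral-gap argument.
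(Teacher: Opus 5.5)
Your proposal is correct and follows essentially the same route as the paper. The paper likewise splits $e^{tL}\mathbbm{1}_A$ into the principal projection plus a complementary part of norm at most $e^{-\rho_2 t}\|\mathbbm{1}_A\|_\pi$, and uses Cauchy--Schwarz against $e_n$ (equivalent to your bound $|g_n|\le \pi_n^{-1/2}\|g\|_\pi$) to reach $e^{-\rho_2 t}\sqrt{S_K/\pi_n}$. It then concludes with the appendix bounds on $S_K$ in the two cases $F(x_2)>0$ and $F(x_2)\le 0$.
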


\begin{proof}
Let $e_n=(\mathbbm{1}_{\{j=n\}})_{j\geq1}$ be the $n$-th
standard basis vector in $\ell^2(\pi)$. Since
\[
\langle e_n,u\rangle_\pi=\pi_nu_n
\]
for every $u\in\ell^2(\pi)$, we have
\[
u_n=\frac{1}{\pi_n}\langle e_n,u\rangle_\pi.
\]
In particular, for every $A\subseteq\mathbb{N}^*$,
\[
P_t(n,A)
=
\bigl(e^{tL}\mathbbm{1}_A\bigr)_n
=
\frac{1}{\pi_n}
\left\langle e_n,e^{tL}\mathbbm{1}_A\right\rangle_\pi.
\]

Let $Q$ be the spectral projection onto the spectral complement of
the principal eigenspace associated with $-\rho_1$. By the spectral
theorem
\cite[Theorem~V.2.10, p.~260]{Kato1966perturbation},
\[
e^{tL}\mathbbm{1}_A
=
e^{-\rho_1t}\phi
\frac{\langle\phi,\mathbbm{1}_A\rangle_\pi}
     {\|\phi\|_\pi^2}
+
e^{tL}Q\mathbbm{1}_A.
\]
Taking the $n$-th coordinate gives
\[
P_t(n,A)
=
e^{-\rho_1t}\phi_n
\frac{\langle\phi,\mathbbm{1}_A\rangle_\pi}
     {\|\phi\|_\pi^2}
+
\frac{1}{\pi_n}
\left\langle e_n,e^{tL}Q\mathbbm{1}_A\right\rangle_\pi.
\]
Since
\[
\nu(A)
=
\frac{\langle\phi,\mathbbm{1}_A\rangle_\pi}
     {\langle\phi,\mathbbm{1}\rangle_\pi},
\]
we obtain
\[
\begin{aligned}
&\left|
P_t(n,A)
-
e^{-\rho_1t}\phi_n
\frac{\langle\phi,\mathbbm{1}\rangle_\pi}
     {\|\phi\|_\pi^2}
\nu(A)
\right|=
\frac{1}{\pi_n}
\left|
\left\langle
e_n,e^{tL}Q\mathbbm{1}_A
\right\rangle_\pi
\right|.
\end{aligned}
\]

The spectrum of $L$ restricted to the range of $Q$ is contained in
$(-\infty,-\rho_2]$. Therefore,
\[
\|e^{tL}Q\|_{\ell^2(\pi)\to\ell^2(\pi)}
\leq e^{-\rho_2t}.
\]
Moreover,
\[
\|\mathbbm{1}_A\|_\pi
\leq
\|\mathbbm{1}\|_\pi
=
\left(
\sum_{j=1}^{\infty}\pi_j
\right)^{\frac{1}{2}},
\quad
\|e_n\|_\pi=\sqrt{\pi_n}.
\]
It follows from the Cauchy--Schwarz inequality that
\[
\begin{aligned}
\frac{1}{\pi_n}
\left|
\left\langle
e_n,e^{tL}Q\mathbbm{1}_A
\right\rangle_\pi
\right|
&\leq
\frac{e^{-\rho_2t}}{\pi_n}
\|e_n\|_\pi
\|\mathbbm{1}_A\|_\pi\leq
e^{-\rho_2t}
\sqrt{
\frac{1}{\pi_n}
\sum_{j=1}^{\infty}\pi_j
}.
\end{aligned}
\]
The right-hand side is independent of $A$. Hence,
\[
\begin{aligned}
\sup_{A\subseteq\mathbb{N}^*}
&\left|
P_t(n,A)
-
e^{-\rho_1t}\phi_n
\frac{\langle\phi,\mathbbm{1}\rangle_\pi}
     {\|\phi\|_\pi^2}
\nu(A)
\right|\leq
e^{-\rho_2t}
\sqrt{
\frac{1}{\pi_n}
\sum_{j=1}^{\infty}\pi_j
}.
\end{aligned}
\]
The conclusion now follows from Lemmas
\ref{sum pi n F(x 2)>0} and
\ref{sum pi n F(x 2)le0}.
\end{proof}

Before deriving a bound uniform in the initial state, we estimate
the coefficient of the principal spectral projection. The following proposition gives an approximation for this quantity, which will be used in the proof
of Theorem \ref{theorem of TV}.

\begin{proposition}\label{le mathcal O (1) rho_1(K)K}
For all sufficiently large $K$,
\[
\left|\frac{\langle\phi,\mathbbm{1}\rangle_\pi}{\|\phi\|_\pi^2}
-\frac{1}{u_{n_2(K)}^0}\right|\le O(1)K\rho_1(K).
\]
\end{proposition}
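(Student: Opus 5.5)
Write $\phi_n=\Phi_n(1+\varepsilon_n)$ with $\sup_n|\varepsilon_n|\le O(1)K\rho_1$, as in Theorem~\ref{theorem of principal eigenvalue and eigenfunction}. Then
\[
\langle\phi,\mathbbm{1}\rangle_\pi=\sum_n\pi_n\Phi_n(1+\varepsilon_n),\qquad
\|\phi\|_\pi^2=\sum_n\pi_n\Phi_n^2(1+\varepsilon_n)^2 .
\]
Hence the quotient equals
\[
\frac{\sum_n\pi_n\Phi_n}{\sum_n\pi_n\Phi_n^2}\bigl(1+O(K\rho_1)\bigr),
\]
since both sums have positive terms and the relative perturbations are uniformly $O(K\rho_1)$. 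Because $\Phi_n\le u^0_N=O(1)$, it suffices to show that
\[
\frac{\sum_n\pi_n\Phi_n}{\sum_n\pi_n\Phi_n^2}=\frac1{u^0_N}+O(K\rho_1).
\]

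We split the sums at $N=n_2(K)$. For $n\ge N$ we have $\Phi_n=u_N^0$, so that part contributes exactly $u_N^0 A$ and $(u_N^0)^2A$ with $A=\sum_{n\ge N}\pi_n$. This tail part has ratio exactly $1/u_N^0$. Writing
\[
\frac{u_N^0A+a}{(u_N^0)^2A+b},\qquad a=\sum_{n<N}\pi_nu_n^0,\quad b=\sum_{n<N}\pi_n(u_n^0)^2,
\]
the difference from $1/u_N^0$ equals
\[
\frac{u_N^0a-b}{u_N^0\bigl((u_N^0)^2A+b\bigr)}=\frac{\sum_{n<N}\pi_nu_n^0(u_N^0-u_n^0)}{u_N^0\bigl((u_N^0)^2A+b\bigr)}\ge0 .
\]
It therefore remains to bound this ratio by $O(K\rho_1)$.

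For the numerator, use $u_N^0-u_n^0=u_1^0\sum_{j=n}^{N-1}w_j$. Then
\[
\pi_nu_n^0(u_N^0-u_n^0)\le u_N^0\,u_1^0\sum_{j\ge n}\pi_nw_j .
\]
Summing over $n$ and interchanging the sums gives at most
\[
u_N^0\,u_1^0\sum_{j<N}w_j\sum_{p\le j}\pi_p\,(\cdots).
\]
This structure is exactly that of $\Xi^0$: indeed
\[
\sum_{n<N}\pi_nu_n^0(u_N^0-u_n^0)=u_1^0\sum_{j=1}^{N-1}w_j\sum_{p\le j}\pi_pu_p^0 .
\]
For $j\le s$ we use \eqref{eq:left-weight-bound}, i.e.\ $\pi_pu_p^0\le Cu_1^0$, which gives a contribution $\le C(u_1^0)^2K\sum_{j\le s}w_j\le CKu_1^0$. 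For $s<j<N$, the bound $w_j\pi_p\le 1/\lambda_p$ from the proof of Lemma~\ref{v n-u 0 n 1lenlen1K}, together with the left bound, gives a contribution of at most $O(K)u_1^0\cdot\bigl(u_1^0\sum_{s\le j<N} w_j\bigr)+O(K)u_1^0=O(K)u_1^0$. So the numerator is $O(K u_1^0)$.

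For the denominator, it is at least $(u_N^0)^3A\ge cA$, and $A=\sum_{n\ge N}\pi_n$. By \eqref{eq:local-pi-matching} and (iii) of Lemma~\ref{u-u,Theta,Xi,GK,pi}, $A w_{N-1}\asymp K^{-1/2}$. Hence the ratio is
\[
O\bigl(K u_1^0 w_{N-1}K^{1/2}\bigr)=O\bigl(K^{3/2}\Delta_K\bigr),\qquad \Delta_K=u_N^0-u_{N-1}^0 .
\]
By Lemma~\ref{u-u,Theta,Xi,GK,pi}(i), $\Delta_K\asymp K^{-1/2}e^{-KH(x_1)}$, and by \eqref{rho1}, $\rho_1\asymp e^{-KH(x_1)}$. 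Thus the bound is $O(Ke^{-KH(x_1)})=O(K\rho_1)$, as needed.

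The main obstacle is the numerator estimate over $s<j<N$. There one must check that the sum $\sum_{p\le j}\pi_pu_p^0$ weighted by $w_j$ does not produce an extra power of $K$. If the crude bound loses a factor of $K$, the fallback is to use the Gaussian localization of $\pi_pw_j$ near $x_2$ from \eqref{eq:local-pi-matching}, exactly as in the estimate of $\Xi^0_{N-1}-\Xi^0_N$. A slightly worse polynomial factor would also suffice for the applications, but the stated $O(K\rho_1)$ should follow from the bounds already proved for $\|\Xi^0\|_\infty\le C_LK$.
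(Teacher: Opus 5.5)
Your proof is correct and shares the paper's skeleton. You pass from $\phi$ to $\Phi$ with the relative eigenvector bound of Theorem~\ref{theorem of principal eigenvalue and eigenfunction}. You then write the deviation from $1/u^0_{n_2(K)}$ as the nonnegative sum $\sum_{n<N}\pi_nu_n^0(u_N^0-u_n^0)$ divided by a denominator of the size of $\sum_{n\ge N}\pi_n$.

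Where you differ is in how the two pieces are estimated.
\begin{itemize}
\item The paper bounds the numerator term by term through the potential $F$. It combines \eqref{1 n1(K) pi n le 1} with the Laplace-type bounds \eqref{Phi n le 1lenlen1K} and \eqref{eq:Phi-flat-bound}, and it bounds $\sum_n\pi_n\Phi_n^2$ below by $c\sqrt K\pi_{n_2(K)}$.
\item You interchange the sums to expose the flux form $u_1^0\sum_jw_j\sum_{p\le j}\pi_pu_p^0$ already underlying $\mathcal{T}$ and $\Xi^0$. You reuse \eqref{eq:left-weight-bound} and $w_j\pi_p\le1/\lambda_p$ from the proof of Lemma~\ref{v n-u 0 n 1lenlen1K} to get $O(Ku_1^0)$. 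For the denominator you use $\Theta^0_N=w_{N-1}\sum_{p\ge N}\pi_p$ and $\Delta_K=u_1^0w_{N-1}$ from Lemma~\ref{u-u,Theta,Xi,GK,pi}.
\end{itemize}
Your route needs no new Laplace estimates and keeps the polynomial prefactors easy to track. The paper's route produces pointwise bounds that it reuses in the proof of Theorem~\ref{Gaussian approximation}.

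The two routes give the same order. Since $u_1^0\asymp K^{-1/2}w_{n_1(K)}^{-1}$ and $\pi_{n_2(K)}\asymp e^{KH(x_1)}/(Kw_{n_1(K)})$, your $O(Ku_1^0)$ equals $O(K^{3/2}\pi_{n_2(K)}e^{-KH(x_1)})$. This is what the paper's term-by-term bounds give once the factor $K/n$ from \eqref{1 n1(K) pi n le 1} is kept. The displayed chain there writes $1/K$ in its place, but its final bound $O(Ke^{-KH(x_1)})$ is unaffected.

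The hedge in your last paragraph is unnecessary. The crude double sum over $s<p\le j<N$ already gives $O(Ku_1^0)$. Dividing by the denominator costs only a factor $K^{1/2}w_{N-1}$, so you obtain $O(K^{3/2}\Delta_K)=O(Ke^{-KH(x_1)})=O(K\rho_1)$ with no Gaussian refinement.

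One justification should be corrected. You need $\sum_n\pi_n\Phi_n/\sum_n\pi_n\Phi_n^2$ bounded above to absorb the factor $1+O(K\rho_1)$. The inequality $\Phi_n\le u_N^0$ gives only the lower bound $1/u_N^0$ for this ratio. The upper bound follows instead from the estimate you then prove, together with $u_N^0\ge1$.
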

\begin{proof}
Write $k=n_1(K)$, $m=n_2(K)$, $a_K=u_m^0$, and
\[
A_\Phi=\sum_{n\ge1}\pi_n\Phi_n,\quad
B_\Phi=\sum_{n\ge1}\pi_n\Phi_n^2,\quad
A_\phi=\sum_{n\ge1}\pi_n\phi_n,\quad
B_\phi=\sum_{n\ge1}\pi_n\phi_n^2.
\]
Proposition~\ref{un2K=2+O} gives $a_K\to2$ and $0<\Phi_n\le a_K$.
The product estimate in Lemma~\ref{Gamma n m=sqrt} gives
\begin{equation}\label{1 n1(K) pi n le 1}
\frac{\pi_n}{\pi_m}
\le O(1)\frac{K}{n}\exp\left\{K\left[F\left(\frac{n}{K}\right)-F\left(\frac{m}{K}\right)\right]\right\},
\quad1\le n\le n_{\frac12}(K).
\end{equation}
For $n>m$, the same formula follows by reversing the product.
Moreover, the definition of $u^0$ and the Laplace estimate
$1+\sum\limits_{j=1}^{k-1}w_j\asymp\sqrt K\,w_k$ imply
\begin{equation}\label{Phi n le 1lenlen1K}
\Phi_n=\frac{1+\sum\limits_{j=1}^{n-1}w_j}{1+\sum\limits_{j=1}^{k-1}w_j}
\le O(1)\frac{n}{\sqrt K}
\exp\left\{K\left[F\left(\frac{k}{K}\right)-F\left(\frac{n}{K}\right)\right]\right\},
\quad1\le n\le k.
\end{equation}
Indeed, $w_j$ increases up to the threshold, so the numerator is
bounded by $O(1)nw_n$. Similarly, for $k\le n\le m$,
\begin{equation}\label{eq:Phi-flat-bound}
a_K-\Phi_n
=\frac{\sum\limits_{j=n}^{m-1}w_j}{1+\sum\limits_{j=1}^{k-1}w_j}
\le O(1)\sqrt K
\exp\left\{K\left[F\left(\frac{k}{K}\right)-F\left(\frac{n}{K}\right)\right]\right\}.
\end{equation}

Since $\Phi_n=a_K$ for $n\ge m$, we have
\[
a_KA_\Phi-B_\Phi
=
\sum_{n=1}^{m-1}
\pi_n\Phi_n(a_K-\Phi_n)
\ge0.
\]
We split the sum at $k$.

For $1\le n\le k$, using \eqref{1 n1(K) pi n le 1}, \eqref{Phi n le 1lenlen1K},
$a_K-\Phi_n\le a_K\le C$, and $F(\frac{k}{K})\le F(\frac{n}{K})$,
we obtain
\[
\begin{aligned}
	\pi_n\Phi_n(a_K-\Phi_n)
	&\le
	O(1)\pi_m
	\frac{1}{K}
	\exp\left\lbrace K\left[F\left(\frac{n}{K}\right)-F\left(\frac{m}{K}\right)\right]\right\rbrace\times
	\frac{n}{\sqrt K}
	\exp\left\lbrace K\left[F\left(\frac{k}{K}\right)-F\left(\frac{n}{K}\right)\right]\right\rbrace\\
	&\le
	O(1)\pi_m\frac{n}{K^{\frac{3}{2}}}
	\exp\left\lbrace -K\left[F\left(\frac{m}{K}\right)-F\left(\frac{k}{K}\right)\right]\right\rbrace.
\end{aligned}
\]
Therefore,
\[
\sum_{n=1}^{k}
\pi_n\Phi_n(a_K-\Phi_n)
\le
O(1)K^{\frac{1}{2}}\pi_m
\exp\left\lbrace -K\left[F\left(\frac{m}{K}\right)-F\left(\frac{k}{K}\right)\right]\right\rbrace.
\]

For $k<n<m$, using \eqref{1 n1(K) pi n le 1},
$\Phi_n\le a_K\le C$, and \eqref{eq:Phi-flat-bound}, we obtain
\[
\begin{aligned}
	\pi_n\Phi_n(a_K-\Phi_n)
	&\le
	O(1)\pi_m
	\frac{1}{K}
	\exp\left\lbrace K\left[F\left(\frac{n}{K}\right)-F\left(\frac{m}{K}\right)\right]\right\rbrace\times
	\sqrt K
	\exp\left\lbrace K\left[F\left(\frac{k}{K}\right)-F\left(\frac{n}{K}\right)\right]\right\rbrace\\
	&\le
	\frac{O(1)}{\sqrt K}\pi_m
	\exp\left\lbrace -K\left[F\left(\frac{m}{K}\right)-F\left(\frac{k}{K}\right)\right]\right\rbrace.
\end{aligned}
\]
Summing over $k<n<m$ gives
\[
\sum_{n=k+1}^{m-1}
\pi_n\Phi_n(a_K-\Phi_n)
\le
O(1)\sqrt K\,\pi_m
\exp\left\lbrace -K\left[F\left(\frac{m}{K}\right)-F\left(\frac{k}{K}\right)\right]\right\rbrace.
\]
Consequently,
\[
0\le a_KA_\Phi-B_\Phi
\le
O(1)\sqrt K\,\pi_m
\exp\left\lbrace -K\left[F\left(\frac{m}{K}\right)-F\left(\frac{k}{K}\right)\right]\right\rbrace.
\]
Since $F=H(0)-H$, $H(x_2)=0$, and
$H'(x_1)=H'(x_2)=0$, the floor errors give
\[
F\left(\frac{m}{K}\right)-F\left(\frac{k}{K}\right)
=
H(x_1)+O\left(\frac{1}{K^2}\right).
\]
Hence
\[
0\le a_KA_\Phi-B_\Phi
\le
O(1)\sqrt K\,\pi_m e^{-KH(x_1)}.
\]
For $m\le n\le m+\lfloor\sqrt K\rfloor$, the local product estimate
and Taylor expansion at $x_2$ yield $\pi_n\ge c\pi_m$.
Consequently,
\[
B_\Phi\ge c\sqrt K\pi_m,
\quad
\left|\frac{A_\Phi}{B_\Phi}-\frac{1}{a_K}\right|
\le O(1)K e^{-KH(x_1)}\le O(1)K\rho_1(K).
\]
In particular, $A_\Phi/B_\Phi$ is bounded uniformly in $K$.
Finally, the relative eigenvector estimate implies
\[
A_\phi=A_\Phi(1+O(K\rho_1)),\quad
B_\phi=B_\Phi(1+O(K\rho_1)).
\]
Consequently,
\[
\frac{A_\varphi}{B_\varphi}
=
\frac{A_\Phi}{B_\Phi}
\left(1+O(K\rho_1)\right).
\]
Combining this estimate
with the preceding bound for $A_\Phi/B_\Phi$ yields the desired
conclusion.
\end{proof}

The pointwise estimate above depends on the initial state through
$\pi_n$. The next proposition uses the projection estimate and
hitting times to obtain bounds uniform in $n\in\mathbb{N}^*$.

\begin{proposition}\label{proposition7.3}
There exists a constant $C>0$ such that, for all sufficiently large $K$,
all $t\ge0$, and all $n\ge1$,
\[
\sup_{A\subseteq\mathbb{N}^*}
\left|P_t(n,A)-e^{-\rho_1t}d_n(K)\nu(A)\right|
\le C\left[e^{-\frac{c_0}{4}t}+R(t,K,\rho_2)\right],
\]
where
\begin{equation}\label{RKrho2}
R(t,K,\rho_2):=B_K e^{-\frac{\rho_2}{2}t}.
\end{equation}
The remainder also satisfies
\begin{equation}\label{eq:uniform-spectral-refined}
\begin{split}
&\sup_{A\subseteq\mathbb{N}^*}
\left|P_t(n,A)-e^{-\rho_1t}d_n(K)\nu(A)\right|\le C e^{-\rho_1t}
\left[e^{-\frac{c_0}{8}t}+B_K e^{-\frac{\rho_2-\rho_1}{2} t}\right].
\end{split}
\end{equation}
Furthermore, $c_0>0$ and $B_K\le O(1) e^{C' K}$ for some \(C'>0\).
\end{proposition}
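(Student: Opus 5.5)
We plan to combine the pointwise spectral bound of Proposition~\ref{proposition7.2} on the compact set $\{1,\ldots,n_{\frac12}(K)\}$ with a hitting-time argument for initial states above $n_{\frac12}(K)$. The definition of $c_0$ shows at once that $c_0>0$: Lemma~\ref{sum le M sum le CK} (the tail estimate also used in Lemma~\ref{n>n2K}) bounds $M_K$ uniformly in $K$. The key fact is that for $j\ge n_{\frac12}(K)$ one has $\lambda_j\le\mu_j/2$. Hence $M_K$ bounds $\mathbb{E}_n[T_{n_{\frac12}(K)}]$ uniformly in $n>n_{\frac12}(K)$, via the standard birth--death formula $\mathbb{E}_n T_{k}=\sum_{j=k}^{n-1}w_j\sum_{p>j}\pi_p$.

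For the exponential moment we use a Kac/Khasminskii-type bound. Since $\sup_n\mathbb{E}_nT_k\le M_K\le c_0^{-1}$, iterating the Markov property gives
\[
\sup_{n>k}\mathbb{P}_n(T_k>2j/c_0)\le 2^{-j},
\]
and therefore $\sup_{n}\mathbb{E}_n e^{\frac{c_0}{4}T_k}\le C$. This yields $\mathbb{P}_n(T_{n_{\frac12}}>t/2)\le Ce^{-\frac{c_0}{8}t}$, and in the same way a bound with rate $c_0/4$ at time $t$.

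For $B_K$ we bound $\min_{j\le n_{\frac12}}\pi_j$ below and $S_K$ above using the product estimates of Lemma~\ref{Gamma n m=sqrt} and Lemmas~\ref{sum pi n F(x 2)>0}--\ref{sum pi n F(x 2)le0}. Since $\pi_j$ is a product of $O(K)$ ratios $\lambda_i/\mu_i$, each bounded above and below on $[0,x_{\frac12}]$, together with factors $1/\mu_j\ge c/K$, we get $\min_{j\le n_{\frac12}}\pi_j\ge e^{-C'K}$ and $S_K\le e^{C'K}$.

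\textbf{Case $n\le n_{\frac12}(K)$.} Proposition~\ref{proposition7.2}, in the form with error $e^{-\rho_2 t}\sqrt{S_K/\pi_n}$, gives an error of at most $B_Ke^{-\rho_2t}$. This is at most $R(t,K,\rho_2)$, and since $\rho_2\ge\rho_1$ it is also at most $e^{-\rho_1 t}B_Ke^{-\frac{\rho_2-\rho_1}{2}t}$.

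\textbf{Case $n>n_{\frac12}(K)$.} Set $k=n_{\frac12}(K)$. By the strong Markov property at $T_k$, which precedes $T_0$ by skip-freeness,
\[
P_t(n,A)=\mathbb{E}_n\bigl[P_{t-T_k}(k,A);\,T_k\le t/2\bigr]+O\bigl(\mathbb{P}_n(T_k>t/2)\bigr).
\]
On the event $\{T_k\le t/2\}$ we apply the first case at time $t-T_k\ge t/2$; the error is then $B_Ke^{-\rho_2 t/2}$. The main term equals
\[
e^{-\rho_1 t}d_k(K)\,\nu(A)\,\mathbb{E}_n\bigl[e^{\rho_1T_k};\,T_k\le t/2\bigr].
\]
We compare this with $e^{-\rho_1t}d_n(K)\nu(A)$. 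Applying $P_s$ to the eigenvector and using the same stopping argument gives the identity $\phi_n=\phi_k\,\mathbb{E}_n e^{\rho_1T_k}$, so $d_n=d_k\,\mathbb{E}_ne^{\rho_1T_k}$. The discrepancy is therefore
\[
d_k\,\mathbb{E}_n\bigl[e^{\rho_1T_k};\,T_k>t/2\bigr]\le C\,e^{(\rho_1-\frac{c_0}{4})t/2}.
\]
Here we used $\rho_1\le c_0/8$ for large $K$ and the exponential moment bound above, together with the fact that $d_k$ is bounded by Proposition~\ref{le mathcal O (1) rho_1(K)K} and Theorem~\ref{theorem of principal eigenvalue and eigenfunction}.

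Collecting the terms gives the first estimate, with rate $c_0/4$ possibly after adjusting how $t$ is split. For the refined estimate \eqref{eq:uniform-spectral-refined} we factor out $e^{-\rho_1t}$. Since $\rho_1\le c_0/16$, we have $e^{-\frac{c_0}{4}\cdot\frac t2}\le e^{-\rho_1t}e^{-\frac{c_0}{8}t}$, and
\[
B_Ke^{-\rho_2 t/2}\le e^{-\rho_1 t}B_Ke^{-\frac{\rho_2-\rho_1}{2}t}\quad\text{whenever }\rho_1\le\rho_2/2\cdot\ldots,
\]
which needs care. More precisely, one keeps the $e^{-\rho_2(t-T_k)}$ error from the first case and inserts $e^{-\rho_1(t-T_k)}$ explicitly. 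It then suffices to write $e^{-\rho_2 s}=e^{-\rho_1 s}e^{-(\rho_2-\rho_1)s}$ with $s\ge t/2$, and to absorb the extra factor $e^{\rho_1 T_k}$ on $\{T_k\le t/2\}$ using the exponential moment.

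\textbf{Main obstacle.} The step we expect to be hardest is the exponential moment bound, uniform in $n$ and $K$, at a rate comparable to $c_0$. This requires the identification $\sup_n\mathbb{E}_nT_k\le M_K$ together with the Kac iteration. A secondary difficulty is obtaining the identity $\phi_n=\phi_k\mathbb{E}_ne^{\rho_1T_k}$ rigorously, which calls for optional stopping with the bounded eigenvector $e^{\rho_1 t}\phi(X_t)$ and its integrability.
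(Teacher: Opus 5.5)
\textbf{Overall.} Your architecture is the paper's: split at $N=n_{\frac12}(K)$, use $e^{-\rho_2t}\sqrt{S_K/\pi_n}\le B_Ke^{-\rho_2t}$ for $n\le N$, and for $n>N$ apply the strong Markov property at $T=T_N$ with $d_n=d_N\,\mathbb{E}_ne^{\rho_1T}$ from optional stopping. Your treatment of the spectral term on $\{T\le t/2\}$ also matches the paper: write $e^{-\rho_2(t-T)}=e^{-\rho_1(t-T)}e^{-(\rho_2-\rho_1)(t-T)}$ and absorb $e^{\rho_1T}$.

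\textbf{The gap: the exponential moment is too weak.} This is exactly where the stated rates come from. With only $\sup_n\mathbb{E}_ne^{c_0T/4}\le C$, the two hitting-tail terms are bounded by
$\mathbb{P}_n(T>t/2)\le Ce^{-c_0t/8}$ and $e^{-\rho_1t}\mathbb{E}_n[e^{\rho_1T};T>t/2]\le Ce^{-\rho_1t/2-c_0t/8}$.
\begin{itemize}
\item Neither bound is $O(e^{-c_0t/4})$, so the first estimate is not reached.
\item After extracting $e^{-\rho_1t}$, their rates are $c_0/8-\rho_1$ and $c_0/8-\rho_1/2$, strictly below the $c_0/8$ required in \eqref{eq:uniform-spectral-refined}.
\item The inequality $e^{-\frac{c_0}{4}\cdot\frac t2}\le e^{-\rho_1t}e^{-\frac{c_0}{8}t}$ you invoke is false for every $t>0$, since its left side is $e^{-c_0t/8}$.
\end{itemize}
``Adjusting how $t$ is split'' does not rescue this. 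Moving the split to $\theta t$ with $\theta>\frac12$ degrades the spectral term to $B_Ke^{-\rho_2(1-\theta)t}$, which is not dominated by $R(t,K,\rho_2)$. Keeping the split at $t/2$ fails too: your Markov-halving iteration, even with the optimal step $eM_K$, only certifies a tail rate $1/(eM_K)$, i.e.\ $c_0/e<c_0/2$ in the worst case. That gives at best $e^{-c_0t/(2e)}$ at time $t/2$, still slower than $e^{-c_0t/4}$.

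\textbf{The fix.} Use the sharp Kac--Khasminskii bound, as the paper does. By the strong Markov property and induction, $\sup_{n\ge N}\mathbb{E}_n[T^m]\le m!\,M_K^m$. Hence $\sup_{n\ge N}\mathbb{E}_ne^{aT}\le(1-aM_K)^{-1}$ for every $a<1/M_K$, and in particular this is at most $4$ for $a=\frac34c_0$. Then $\mathbb{P}_n(T\ge t/2)\le4e^{-3c_0t/8}$, and, for $\rho_1\le c_0/4$, $\mathbb{E}_n[e^{\rho_1T};T\ge t/2]\le4e^{-c_0t/4}$. These give the $e^{-c_0t/4}$ term directly. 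After factoring out $e^{-\rho_1t}$ they give rates $\frac{3c_0}{8}-\rho_1\ge\frac{c_0}{8}$ and $\frac{c_0}{4}$, as required.

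\textbf{A smaller point on $c_0>0$.} The estimate $\sum_{j\ge n_2}w_j\sum_{p>j}\pi_p\le CK$ in Lemma~\ref{sum le M sum le CK} grows with $K$, so it cannot bound $M_K$ uniformly. You need the geometric-tail reduction $w_j\sum_{p>j}\pi_p\le2/\mu_{j+1}$ for $j\ge N$. It holds because $\lambda_p/\mu_p\le\frac12$ for $p\ge N+1$; your claim for $j\ge N$ may fail at $j=N$ itself, but it is not needed. Combine this with $\sum_{p>N}1/\mu_p\le M$, or with the integral condition in \textbf{(A3)}.
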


\begin{proof}
Write $N=n_{\frac12}(K)$ and $T=T_N$.
The birth--death hitting-time identity gives
\[
\sup_{n\ge N}\mathbb{E}_n T
=\sum_{j=N}^\infty\frac{1}{\lambda_j\pi_j}
\sum_{p=j+1}^\infty\pi_p=M_K.
\]
For $j\ge N$, detailed balance and the inequality
$\lambda_p/\mu_p\le1/2$ for $p\ge N+1$ imply
\[
\frac{1}{\lambda_j\pi_j}\sum_{p=j+1}^\infty\pi_p
\le\frac{2}{\mu_{j+1}}.
\]
Since $x\mapsto[x\widetilde{\mu}(x)]^{-1}$ is decreasing,
\[
M_K\le2\sum_{p=N+1}^\infty\frac{1}{p\widetilde{\mu}(p/K)}
\le2\int_{\frac{N}{K}}^\infty\frac{dx}{x\widetilde{\mu}(x)}.
\]
After increasing the fixed $K_0$ if necessary, $N/K\ge x_{\frac12}/2$.
Hence \textbf{(A3)} gives
\[
\sup_{K\ge K_0}M_K<\infty.
\]
In particular, $c_0>0$.

The strong Markov property and induction give
$\sup\limits_{n\ge N}\mathbb{E}_n [T^m]\le m!M_K^m$ for every integer $m\ge1$.
Thus
\[
\sup_{n\ge N}\mathbb{E}_n \left[e^{\frac{3 c_0}{4}T}\right]\le4.
\]
For sufficiently large $K$, $\rho_1\le c_0/4$. Since $\phi$ is bounded,
optional stopping of $e^{\rho_1(t\wedge T)}\phi_{X_{t\wedge T}^K}$
and the preceding exponential-moment bound imply
\[
\mathbb{E}_n\left[ e^{\rho_1T}\right]=\frac{\phi_n}{\phi_N},\quad n\ge N.
\]
The same moment bound yields
\[
\mathbb{P}_n(T\ge\frac t2)\le4e^{-\frac{3c_0}{8}t},\quad
\mathbb{E}_n\left[e^{\rho_1T}\mathbbm{1}_{\{T\ge \frac t2\}}\right]
\le4e^{-\frac{c_0}{4}t}.
\]
Theorem~\ref{theorem of principal eigenvalue and eigenfunction} and
Proposition~\ref{le mathcal O (1) rho_1(K)K} imply
$\sup\limits_{n\ge1}d_n(K)\le C$.
For $A\subseteq\mathbb{N}^*$ and $s\ge0$, write
\[
r_s(N,A):=P_s(N,A)-e^{-\rho_1s}d_N(K)\nu(A).
\]
The pointwise spectral estimate established in the proof of
Proposition~\ref{proposition7.2} gives
\[
|r_s(N,A)|\le e^{-\rho_2s}\sqrt{\frac{S_K}{\pi_N}}
\le B_K e^{-\rho_2s}.
\]
For $n\ge N$, the strong Markov property at $T$ and the identity
$d_n(K)=d_N(K)\mathbb{E}_n \left[e^{\rho_1T}\right]$ yield the exact decomposition
\[
\begin{split}
P_t(n,A)-e^{-\rho_1t}d_n(K)\nu(A)=&\mathbb{E}_n\left[
r_{t-T}(N,A)\mathbbm{1}_{\{T<\frac t2\}}\right]
+\mathbb{P}_n(X_t^K\in A,\ T\ge \frac t2)\\
&-e^{-\rho_1t}d_N(K)\nu(A)
\mathbb{E}_n\left[e^{\rho_1T}\mathbbm{1}_{\{T\ge \frac t2\}}\right].
\end{split}
\]
To retain the principal exponential factor, observe that
$\rho_2-\rho_1>0$ and $\mathbb{E}_n\left[ e^{\rho_1T}\right]\le4$. Consequently,
\[
\begin{split}
\left|\mathbb{E}_n\left[
r_{t-T}(N,A)\mathbbm{1}_{\{T<\frac t2\}}\right]\right|\le B_K e^{-\rho_1t}
\mathbb{E}_n\left[
e^{\rho_1T}e^{-(\rho_2-\rho_1)(t-T)}
\mathbbm{1}_{\{T<\frac t2\}}\right]
\le&4B_K e^{-\rho_1t}e^{-\frac{\rho_2-\rho_1}{2}t}.
\end{split}
\]
The two remaining terms are bounded using the preceding tail
estimates and $d_N(K)\le C$. We obtain, uniformly in $n\ge N$,
\[
\begin{split}
&\sup_{A\subseteq\mathbb{N}^*}
\left|P_t(n,A)-e^{-\rho_1t}d_n(K)\nu(A)\right|\le C\left[
e^{-\frac{3c_0}{8}t}+e^{-\rho_1t}e^{-\frac{c_0}{4}t}
+B_K e^{-\rho_1t}e^{-\frac{\rho_2-\rho_1}{2} t}
\right].
\end{split}
\]
This proves the first asserted bound for $n\ge N$, because
$(\rho_1+\rho_2)/2\ge\rho_2/2$. It also proves
\eqref{eq:uniform-spectral-refined}: after extracting
$e^{-\rho_1t}$, the first term decays at rate
$3c_0/8-\rho_1\ge c_0/8$, and the second decays at rate $c_0/4$.

For $n\le N$, the pointwise spectral estimate gives directly
\[
\sup_{A\subseteq\mathbb{N}^*}
\left|P_t(n,A)-e^{-\rho_1t}d_n(K)\nu(A)\right|
\le e^{-\rho_2t}\sqrt{\frac{S_K}{\pi_n}}
\le B_K e^{-\rho_2t}.
\]
Since $\rho_2>\rho_1$, this implies both asserted bounds for
$n\le N$ and completes the uniform estimates.

Finally, on the compact interval $[0,x_{\frac12}+1]$, the positive
rate functions and their ratio have fixed upper and lower bounds.
The product formula for $\pi_n$ consequently gives constants
$C_1,C_2>0$ such that
\[
\min_{1\le n\le N}\pi_n\ge C_1 K^{-1}e^{-C_2K},
\quad\sum_{n=1}^{N+1}\pi_n\le C_1^{-1}K e^{C_2K}.
\]
The tail beyond $N+1$ is geometric, since
$\pi_{n+1}/\pi_n\le1/2$ for $n\ge N+1$.
These bounds imply $B_K\le O(1) e^{C' K}$.
\end{proof}

\begin{remark}
Proposition~\ref{proposition7.2} retains the dependence on the initial
state and the decay rate $\rho_2$. Proposition~\ref{proposition7.3}
provides a bound uniform in the initial state at the cost of a larger
amplitude and the rate $\rho_2/2$.
\end{remark}

We now rewrite the preceding uniform estimate as a total variation estimate on the full state space, including the absorbing state.

\begin{corollary}\label{TV}
There exists $C>0$ such that, for all sufficiently large $K$,
all $t\ge0$, and all $n\ge1$,
\[
\begin{split}
&\left\|\mathbb{P}_n(X_t^K\in\cdot)
-\left[e^{-\rho_1t}d_n(K)\nu
+(1-e^{-\rho_1t}d_n(K))\delta_0\right]\right\|_{\mathrm{TV}}\le C\left[e^{-\frac{c_0}{4}t}+R(t,K,\rho_2)\right].
\end{split}
\]
Here the bracketed expression has total mass one but may be signed;
its difference from the probability law is interpreted using the
zero-total-mass convention of Section~\ref{subsec:notation}.
\end{corollary}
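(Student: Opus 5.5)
The plan is to split the total variation distance into the contribution of the absorbing state $0$ and the contribution of $\mathbb{N}^*$, and to control each one with Proposition~\ref{proposition7.3}. Write $\beta:=e^{-\rho_1t}d_n(K)$ and let $\zeta:=\mathbb{P}_n(X_t^K\in\cdot)-[\beta\nu+(1-\beta)\delta_0]$. This is a signed measure of total mass zero. By the convention of Section~\ref{subsec:notation},
\[
\|\zeta\|_{\mathrm{TV}}=\frac12|\zeta_0|+\frac12\sum_{j\ge1}|\zeta_j|.
\]
On $\mathbb{N}^*$ we have $\zeta_j=P_t(n,\{j\})-\beta\nu_j$. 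At the absorbing state,
\[
\zeta_0=\mathbb{P}_n(T_0\le t)-(1-\beta)=-(P_t(n,\mathbb{N}^*)-\beta).
\]
Because the total mass is zero, this equals $-\sum_{j\ge1}\zeta_j$.

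First I treat the part on $\mathbb{N}^*$. Let $A_+:=\{j\ge1:\zeta_j>0\}$. Then
\[
\sum_{j\ge1}|\zeta_j|=\zeta(A_+)-\zeta(\mathbb{N}^*\setminus A_+).
\]
Each of these two quantities has the form $P_t(n,A)-\beta\nu(A)$ for some $A\subseteq\mathbb{N}^*$. Proposition~\ref{proposition7.3} therefore bounds each one by $C[e^{-\frac{c_0}{4}t}+R(t,K,\rho_2)]$. So the sum over $\mathbb{N}^*$ is at most twice that bound.

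Next I treat the atom at $0$. Here $|\zeta_0|=|P_t(n,\mathbb{N}^*)-\beta\nu(\mathbb{N}^*)|$, which is again of the form in Proposition~\ref{proposition7.3}, now with $A=\mathbb{N}^*$, so the same bound applies. Adding the two parts gives the corollary after enlarging $C$.

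No step is a genuine obstacle. The only point needing care is that the comparison object is signed: $d_n(K)$ can exceed one slightly, so $1-\beta$ may be negative. The argument above never uses positivity, only that the total mass is zero, which holds because $\nu(\mathbb{N}^*)=1$. The supremum over $A$ in Proposition~\ref{proposition7.3} is uniform in $n$ and $t$, so the resulting constant is independent of $K$, $n$ and $t$.
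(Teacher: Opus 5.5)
Your argument is correct and is essentially the paper's: both use the zero total mass of the difference (in particular, that its value at the atom $0$ equals $-\bigl(P_t(n,\mathbb{N}^*)-e^{-\rho_1t}d_n(K)\bigr)$) to reduce everything to set-wise differences $P_t(n,A)-e^{-\rho_1t}d_n(K)\nu(A)$ with $A\subseteq\mathbb{N}^*$, and then invoke Proposition~\ref{proposition7.3}. The only difference is that the paper uses the supremum form $\sup_{A\subseteq\mathbb{N}}|\zeta(A)|$, replacing $A$ by $A^c$ when $0\in A$; this keeps the constant $C$ unchanged instead of incurring your factor $\tfrac32$, which is immaterial.
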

\begin{proof}
The difference of the two measures has total mass zero. For any
$A\subseteq\mathbb{N}$, its value on $A$ equals either its value on
$A\cap\mathbb{N}^*$, when $0\notin A$, or the negative of its value on
$A^c\subseteq\mathbb{N}^*$, when $0\in A$.
Thus its total variation equals the supremum of the absolute values
over subsets of $\mathbb{N}^*$, and the claim follows from
Proposition~\ref{proposition7.3}.
\end{proof}

The preceding estimates provide both the uniform spectral remainder
and the approximation of its principal coefficient. We now combine
them to prove the total variation bounds.

\subsection{Proof of Theorem \ref{theorem of TV}}

We are now ready to prove Theorem \ref{theorem of TV}.

\begin{proof}[Proof of Theorem \ref{theorem of TV}]
Write $a_K=u_{n_2(K)}^0$ and
$A_K=\langle\phi,\mathbbm{1}\rangle_\pi/\|\phi\|_\pi^2$.
Proposition~\ref{le mathcal O (1) rho_1(K)K} and the uniform relative
eigenvector estimate imply
\[
d_n(K)=A_K\phi_n
=\frac{\Phi_n}{a_K}(1+O(K\rho_1))
=\alpha_n(K)(1+O(K\rho_1)),
\]
uniformly in $n$. Since $0<\alpha_n(K)\le1$,
\[
|e^{-\rho_1t}d_n(K)-\alpha_n(K)|
\le O(1)K\rho_1e^{-\rho_1t}+O(1)\left(1-e^{-\rho_1t}\right).
\]
Corollary~\ref{TV} and the triangle inequality prove
\eqref{equation TV}.

For the conditional estimate, set
\[
\varepsilon_K(t):=C e^{-\rho_1t}
\left[e^{-\frac{c_0}{8}t}+B_K e^{-\frac{\rho_2-\rho_1}{2}t}\right],
\quad a=e^{-\rho_1t}d_n(K)>0,
\]
where $C$ is chosen to dominate the bound in
\eqref{eq:uniform-spectral-refined}. If
$\varepsilon_K(t)<a/2$, then $P_t(n,\mathbb{N}^*)\ge a/2$, and for
$A\subseteq\mathbb{N}^*$,
\[
\begin{split}
\left|\frac{P_t(n,A)}{P_t(n,\mathbb{N}^*)}-\nu(A)\right|
&\le\frac{|P_t(n,A)-a\nu(A)|
+\nu(A)|a-P_t(n,\mathbb{N}^*)|}{P_t(n,\mathbb{N}^*)}\le\frac{4\varepsilon_K(t)}{a}.
\end{split}
\]
If $\varepsilon_K(t)\ge a/2$, the total variation distance is at most
one and $4\varepsilon_K(t)/a\ge2$. Taking the supremum over $A$ and
combining the two cases gives
\[
\left\|\frac{P_t(n,\cdot)}{P_t(n,\mathbb{N}^*)}-\nu\right\|_{\mathrm{TV}}
\le\min\left\{1,\frac{4\varepsilon_K(t)}{a}\right\}.
\]
Cancelling $e^{-\rho_1t}$ in $\varepsilon_K(t)/a$ and increasing
$C$ proves \eqref{ptnAptnN} for every $t\ge0$.
\end{proof}
\begin{remark}\label{remark pointwise}
For each initial state $n$, the pointwise spectral estimate yields
\[
\left\|\frac{P_t(n,\cdot)}{P_t(n,\mathbb{N}^*)}-\nu\right\|_{\mathrm{TV}}
\le\min\left\{1,
\frac{4}{d_n(K)}\sqrt{\frac{S_K}{\pi_n}}
e^{-(\rho_2-\rho_1)t}\right\}.
\]
The proof is the same two-case argument, with
$\varepsilon_K(t)$ replaced by $e^{-\rho_2t}\sqrt{S_K/\pi_n}$.
The factor $d_n(K)^{-1}$ records the effect of conditioning on survival
from an initial state with a small probability of reaching the
positive metastable region.
\end{remark}

\section{Gaussian approximation and proof of
Theorem~\ref{Gaussian approximation}}
\label{section of Gaussian approximation}

We first compare the quasi-stationary weights with a Gaussian profile
on a window of width $\sqrt K\ln K$ around $n_2(K)$. We then control
the low-population basin and the remaining tails using the
eigenvector and reversible-weight estimates. Normalizing the two
profiles gives the total variation bound.

\begin{proof}[Proof of Theorem \ref{Gaussian approximation}]
Write $k=n_1(K)$, $m=n_2(K)$, $N=n_{\frac12}(K)$,
$m_K=\lfloor\sqrt K\ln K\rfloor$, and
\[
W_K:=\{n\ge1:|n-m|\le m_K\},\quad
q_K(n):=\frac{\pi_n\phi_n}{\pi_m\phi_m},\quad
g_K(n):=\exp\left\{-\frac{(n-m)^2}{2K\sigma^2}\right\}.
\]
We first compare the two unnormalized sequences.
The rate functions are $C^2$ and positive near $x_2$, so uniformly for
$n\in W_K$,
\[
\widetilde{\lambda}\left(\frac nK\right)
=\widetilde{\lambda}(x_2)\left(1+O\left(\frac{\ln K}{\sqrt K}\right)\right),
\quad
\widetilde{\mu}\left(\frac nK\right)
=\widetilde{\mu}(x_2)\left(1+O\left(\frac{\ln K}{\sqrt K}\right)\right).
\]
For $n\le m$, the exact product identity is
\[
\frac{\pi_n}{\pi_m}=\frac{\mu_m}{\mu_n}\Gamma_{m,n}.
\]
Lemma~\ref{Gamma n m=sqrt}, the analogous identity for $n\ge m$,
and Taylor expansion of $H$ at $x_2$ therefore yield
\[
\frac{\pi_n}{\pi_m}
=g_K(n)\left(1+O\left(\frac{(\ln K)^3}{\sqrt K}\right)\right),
\quad n\in W_K.
\]
The use of $m=\lfloor Kx_2\rfloor$ contributes an error of order
$O(1)\ln K/\sqrt K$, which is included above.
On $W_K$, \eqref{eq:Phi-flat-bound} and the relative eigenvector
estimate imply
\[
\frac{\phi_n}{\phi_m}
=1+O(1)K\rho_1
+O\left(1\right)\sqrt K e^{-KH(x_1)+C(\ln K)^2}.
\]
Thus
\begin{equation}\label{pinpin2K-gKn n2K-mK+mK}
\sum_{n\in W_K}|q_K(n)-g_K(n)|\le O(1)(\ln K)^4.
\end{equation}

We next bound the tails. Equations \eqref{1 n1(K) pi n le 1} and
\eqref{Phi n le 1lenlen1K}, together with the relative eigenvector
estimate, give
\[
\sum_{n=1}^k q_K(n)\le O(1)K^{\frac32}e^{-KH(x_1)}.
\]
Choose $0<\delta<\frac12\min\{x_2-x_1,x_{\frac12}-x_2\}$ sufficiently
small. On
$[x_1,x_2-\delta]\cup[x_2+\delta,x_{\frac12}]$,
$F(x)\le F(x_2)-\eta$ for some $\eta>0$.
For $x$ close to $x_2$, the nondegeneracy $F''(x_2)<0$ gives
$F(x)\le F(x_2)-c(x-x_2)^2$.
Using \eqref{1 n1(K) pi n le 1} and the uniform bound
$\phi_n/\phi_m\le C$, we consequently obtain
\[
\sum_{\substack{k<n\le N\\n\notin W_K}}q_K(n)
\le O(1)K e^{-\eta K}+O(1)K e^{-c(\ln K)^2}.
\]
The one-step floor errors only change the constants.
Since $F(x_{\frac12})<F(x_2)$ and the reversible weights have a
geometric tail after $N+1$, the same product estimate gives
\[
\sum_{n>N}q_K(n)\le O(1)e^{-\eta'K}
\]
for some $\eta'>0$.
The Gaussian tail satisfies
\[
\sum_{n\notin W_K}g_K(n)\le O(1)\sqrt K e^{-c'(\ln K)^2}
\]
for some $c'>0$.
Combining these bounds with \eqref{pinpin2K-gKn n2K-mK+mK} gives
\[
E_K:=\sum_{n\ge1}|q_K(n)-g_K(n)|\le O(1)(\ln K)^4.
\]

Let $Q_K=\sum\limits_{n\ge1}q_K(n)$ and $Z(K)=\sum\limits_{n\ge1}g_K(n)$.
Then $|Q_K-Z(K)|\le E_K$. A Gaussian sum-integral comparison gives
\[
Z(K)=\sqrt{2\pi K}\,\sigma+O(1).
\]
Since $\nu_n^K=q_K(n)/Q_K$ and $G_n^K=g_K(n)/Z(K)$,
\[
\begin{split}
2\|\nu^K-G^K\|_{\mathrm{TV}}
&\le\frac{1}{Z(K)}\sum_{n\ge1}|q_K(n)-g_K(n)|
+Q_K\left|\frac1{Q_K}-\frac1{Z(K)}\right|\\
&\le\frac{2E_K}{Z(K)}
\le O\left(\frac{(\ln K)^4}{\sqrt K}\right).
\end{split}
\]
This proves the theorem.
\end{proof}

\section{Application to a logistic birth--death process with an Allee effect}\label{section of Application}

We conclude by applying the preceding results to a density-dependent
birth--death model exhibiting a strong Allee effect. The corresponding
deterministic dynamics are described by the classical cubic Allee
equation; see, for example, \cite{dennis1989allee,kramer2009evidence}. The
birth--death formulation is motivated by the Allee-type population
models studied in \cite{mulkey2006birth}. Let $K>1$ be the population-size scaling
parameter and let $X_t^K$ denote the population size at time $t$.
Fix constants
\[
        0<a<b,\quad r>0,\quad s>0.
\]
Here $aK$ represents the Allee threshold, whereas $bK$ represents
the carrying capacity. If one wishes to normalize the carrying
capacity to $K$, one may simply take $b=1$.

Consider the birth--death process $(X_t^K)_{t\geq0}$ with birth and
death rates
\begin{equation}\label{examplelambdan-mun}
        \lambda_n^K
        =
        n\widetilde{\lambda}\left(\frac{n}{K}\right),
        \quad
        \mu_n^K
        =
        n\widetilde{\mu}\left(\frac{n}{K}\right),
        \quad n\geq1,
\end{equation}
where
\begin{equation}\label{examplelambdamu}
        \widetilde{\lambda}(x)
        =
        r(a+b)x+s,
        \quad
        \widetilde{\mu}(x)
        =
        rx^2+rab+s.
\end{equation}
Notice that the functions $\widetilde{\lambda}$ and
$\widetilde{\mu}$ are independent of $K$. Moreover,
\[
\begin{aligned}
x\bigl(\widetilde{\lambda}(x)-\widetilde{\mu}(x)\bigr)
&=
rx\bigl((a+b)x-x^2-ab\bigr)=
rx(x-a)(b-x).
\end{aligned}
\]
Thus, the associated deterministic equation is
\begin{equation}\label{rxx--x}
        \frac{dx}{dt}
        =
        rx(x-a)(b-x).
\end{equation}
Its equilibria are $0$, $a$, and $b$. The equilibria $0$ and
$b$ are locally asymptotically stable, whereas $a$ is unstable.
Consequently, \eqref{rxx--x} exhibits the bistable structure associated
with a strong Allee effect.

More precisely, suppose that
\[
        \frac{X_0^K}{K}
        \xrightarrow[K\to\infty]{\mathbb{P}}
        x_0.
\]
Then the density-dependent Markov-chain limit theorem from
\cite{Kurtz} implies that, for every $T>0$,
\[
        \sup_{0\leq t\leq T}
        \left|
        \frac{X_t^K}{K}-x(t)
        \right|
        \xrightarrow[K\to\infty]{\mathbb{P}}
        0,
\]
where $x(t)$ is the solution of \eqref{rxx--x} with $x(0)=x_0$.

We now verify that the rate functions in
\eqref{examplelambdamu} satisfy assumptions
\textbf{(A1)}-\textbf{(A3)} and \textbf{(H)}.

\begin{lemma}\label{lem:Allee-example}
Let $\widetilde{\lambda}$ and $\widetilde{\mu}$ be defined by
\eqref{examplelambdamu}. Then assumptions
\textbf{(A1)}-\textbf{(A3)} and \textbf{(H)} are satisfied, with
\[
        x_1=a,\quad x_2=b.
\]
\end{lemma}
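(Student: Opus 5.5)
The plan is to verify each assumption directly from the explicit formulas \eqref{examplelambdamu}, using throughout the factorization
\[
\widetilde{\mu}(x)-\widetilde{\lambda}(x)=r x^2-r(a+b)x+rab=r(x-a)(x-b).
\]
Both rate functions are polynomials, hence $C^\infty$. They are strictly positive on $\mathbb{R}_+$, since $\widetilde{\lambda}\ge s>0$ and $\widetilde{\mu}\ge rab+s>0$. For \textbf{(A1)}, note that $\widetilde{\lambda}(0)=s<rab+s=\widetilde{\mu}(0)$.

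For \textbf{(A2)}, monotonicity is immediate: $\widetilde{\lambda}'=r(a+b)>0$ and $\widetilde{\mu}'(x)=2rx\ge0$, with strict increase on $(0,\infty)$. The factorization shows that the only positive intersections are $x_1=a$ and $x_2=b$. It also gives the sign pattern: $\widetilde{\lambda}<\widetilde{\mu}$ on $(0,a)\cup(b,\infty)$ and $\widetilde{\lambda}>\widetilde{\mu}$ on $(a,b)$. Transversality reduces to comparing $r(a+b)$ with $2ra$ and with $2rb$. Since $a<b$, this gives $\widetilde{\lambda}'(a)>\widetilde{\mu}'(a)$ and $\widetilde{\lambda}'(b)<\widetilde{\mu}'(b)$.

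The only step needing a small computation is the strict decrease of $h=\ln(\widetilde{\mu}/\widetilde{\lambda})$ on $(0,a)$. Here $h'$ has the sign of
\[
\widetilde{\mu}'\widetilde{\lambda}-\widetilde{\lambda}'\widetilde{\mu}
=r^2(a+b)(x^2-ab)+rs(2x-a-b).
\]
For $0<x<a<b$ we have $x^2<a^2<ab$ and $2x<a+b$, so both terms are negative. Hence $h'<0$ on $(0,a)$.

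For \textbf{(A3)}, three facts are needed.
\begin{itemize}
\item $\widetilde{\lambda}/\widetilde{\mu}\sim (a+b)/x\to0$ as $x\to\infty$.
\item $\widetilde{\mu}'/\widetilde{\mu}=2rx/(rx^2+rab+s)$ is continuous on $\mathbb{R}_+$ and tends to $0$ at infinity, so it is bounded.
\item The integrand $1/(x\widetilde{\mu}(x))$ is continuous on $[b,\infty)$ and is $O(x^{-3})$, so the integral converges.
\end{itemize}
As a consistency check, $x_{\frac12}$ is well defined. The continuous ratio $\widetilde{\lambda}/\widetilde{\mu}$ equals $1$ at $b$ and tends to $0$ at infinity. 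Hence the set where it equals $\frac12$ is nonempty, and it is compact because it is closed and bounded; its maximum therefore exists and lies beyond $b$.

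For \textbf{(H)}, $H'=h=\ln\widetilde{\mu}-\ln\widetilde{\lambda}$ is smooth on $\mathbb{R}_+$ because both rates are positive there. Therefore $H$ is $C^\infty$ and $H'''=h''$. Differentiating twice gives
\[
h''(x)=\frac{2r\widetilde{\mu}(x)-4r^2x^2}{\widetilde{\mu}(x)^2}+\frac{r^2(a+b)^2}{\widetilde{\lambda}(x)^2}.
\]
Each term is continuous on $\mathbb{R}_+$ and is $O(x^{-2})$ as $x\to\infty$: the first numerator is $O(x^2)$ over a denominator of order $x^4$, and the second is $O(x^{-2})$ directly. Hence $(1+x^2)|H'''|$ is bounded on every compact set and at infinity, so it is bounded on $\mathbb{R}_+$.

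I expect no real obstacle. The only points needing care are the sign computation for $h'$ on $(0,a)$ and tracking the decay orders in $h''$.
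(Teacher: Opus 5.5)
Your proposal is correct and follows the paper's proof essentially step by step. Your numerator for $h'$ is exactly $r\,q(x)$ from the paper, and your term-by-term sign check on $(0,a)$ replaces the paper's argument via monotonicity of $q$ and $q(a)<0$. Likewise, your decay-order arguments for the integral in \textbf{(A3)} and for $(1+x^2)|H'''|$ stand in for the paper's explicit antiderivative and limit computations; these differences are cosmetic, and your extra check that $x_{\frac12}$ is well defined is harmless.
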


\begin{proof}
We verify the assumptions successively.

\medskip
\noindent\textbf{Verification of \textbf{(A1)} and \textbf{(A2)}.}
The functions $\widetilde{\lambda}$ and $\widetilde{\mu}$ belong
to $C^\infty(\mathbb{R}_+)$, are positive and strictly increasing,
and satisfy
\[
        \widetilde{\lambda}(0)=s
        <
        rab+s
        =
        \widetilde{\mu}(0).
\]
Furthermore,
\[
        \widetilde{\lambda}(x)-\widetilde{\mu}(x)
        =
        r(x-a)(b-x),
\]
and hence
\[
        \widetilde{\lambda}(a)=\widetilde{\mu}(a),
        \quad
        \widetilde{\lambda}(b)=\widetilde{\mu}(b).
\]
The two intersections are transversal because
\[
\begin{aligned}
\widetilde{\lambda}'(a)-\widetilde{\mu}'(a)
&=r(b-a)>0,\quad\widetilde{\lambda}'(b)-\widetilde{\mu}'(b)
=r(a-b)<0.
\end{aligned}
\]

It remains to verify the monotonicity condition in $(0,a)$.
Let
\[
        C:=rab+s.
\]
Then
\[
\begin{aligned}
\left(
\ln\frac{\widetilde{\mu}(x)}
          {\widetilde{\lambda}(x)}
\right)'
&=
\frac{2rx}{rx^2+C}
-
\frac{r(a+b)}{r(a+b)x+s}=
\frac{
r\bigl[
r(a+b)x^2+2sx-(a+b)C
\bigr]
}{
(rx^2+C)\bigl(r(a+b)x+s\bigr)
}.
\end{aligned}
\]
Set
\[
        q(x)
        :=
        r(a+b)x^2+2sx-(a+b)C.
\]
Since
\[
        q'(x)=2r(a+b)x+2s>0,
\]
the function $q$ is strictly increasing on $\mathbb{R}_+$.
Moreover,
\[
\begin{aligned}
q(a)
&=
r(a+b)a^2+2sa-(a+b)(rab+s)=
(a-b)\bigl(ra(a+b)+s\bigr)
<0.
\end{aligned}
\]
It follows that $q(x)<0$ for every $x\in(0,a)$. Therefore,
\[
        \left(
        \ln\frac{\widetilde{\mu}(x)}
                  {\widetilde{\lambda}(x)}
        \right)'
        <0,
        \quad x\in(0,a),
\]
which proves \textbf{(A1)} and \textbf{(A2)}.

\medskip
\noindent\textbf{Verification of \textbf{(A3)}.}
Since $\widetilde{\lambda}$ is affine and
$\widetilde{\mu}$ is quadratic,
\[
        \lim_{x\to\infty}
        \frac{\widetilde{\lambda}(x)}
             {\widetilde{\mu}(x)}
        =
        \lim_{x\to\infty}
        \frac{r(a+b)x+s}{rx^2+C}
        =
        0.
\]
In addition,
\[
        \frac{\widetilde{\mu}'(x)}
             {\widetilde{\mu}(x)}
        =
        \frac{2rx}{rx^2+C},
\]
and therefore
\[
        \sup_{x\in\mathbb{R}_+}
        \frac{\widetilde{\mu}'(x)}
             {\widetilde{\mu}(x)}
        <\infty.
\]
Finally,
\[
\begin{aligned}
\int_b^\infty
        \frac{dx}{x\widetilde{\mu}(x)}
&=
\int_b^\infty
        \frac{dx}{x(rx^2+C)}=
\frac{1}{2C}
\left.
\ln\frac{x^2}{rx^2+C}
\right|_{b}^{\infty}
<\infty.
\end{aligned}
\]
Thus \textbf{(A3)} holds.

\medskip
\noindent\textbf{Verification of \textbf{(H)}.}
Define
\[
        H(x)
        =
        \int_b^x
        \ln
        \frac{\widetilde{\mu}(u)}
             {\widetilde{\lambda}(u)}
        \,du
        =
        \int_b^x
        \ln
        \frac{ru^2+C}
             {r(a+b)u+s}
        \,du.
\]
Then
\[
        H'(x)
        =
        \ln
        \frac{rx^2+C}
             {r(a+b)x+s},\quad       
             H''(x)
        =
        \frac{2rx}{rx^2+C}
        -
        \frac{r(a+b)}{r(a+b)x+s}.
\]
Differentiating once more gives
\[
        H'''(x)
        =
        \frac{2r(C-rx^2)}
             {(rx^2+C)^2}
        +
        \frac{r^2(a+b)^2}
             {\bigl(r(a+b)x+s\bigr)^2}.
\]
Since $C>0$ and $s>0$, the function $H'''$ is continuous on
$\mathbb{R}_+$. Moreover,
\[
        \lim_{x\to\infty}
        (1+x^2)
        \frac{2r(C-rx^2)}
             {(rx^2+C)^2}
        =
        -2,\quad
        \lim_{x\to\infty}
        (1+x^2)
        \frac{r^2(a+b)^2}
             {\bigl(r(a+b)x+s\bigr)^2}
        =
        1.
\]
Therefore,
\[
        \sup_{x\in\mathbb{R}_+}
        (1+x^2)|H'''(x)|
        <\infty.
\]
Thus \textbf{(H)} is satisfied.
\end{proof}

Lemma~\ref{lem:Allee-example} shows that the model
\eqref{examplelambdan-mun}--\eqref{examplelambdamu} lies within the
framework developed in this paper. In particular, the Allee threshold
and the positive stable population level are given by
\[
        n_1(K)=\lfloor aK\rfloor,
        \quad
        n_2(K)=\lfloor bK\rfloor.
\]
Consequently, all the results established in
Section~\ref{M-result}, including the sharp asymptotics of
the principal eigenvalue and the mean extinction time, the quantitative
convergence to the unique quasi-stationary distribution, and its
Gaussian approximation near $bK$, apply to this model.

\appendix
\section{Auxiliary estimates}

Set
\[
M:=
\frac{1}{\widetilde{\mu}(x_2)}
+
\int_{x_2}^{\infty}
\frac{dx}{x\widetilde{\mu}(x)}
<\infty,
\]
where the finiteness follows from \textbf{(A3)}. The first result provides elementary estimates for the birth and
death rates.

\begin{lemma}\label{sum le M sum le CK}
	(\cite{Chazottes2016}, Lemma 9.1) There exists a constant $C\ge1$ such that for every $K\ge K_0$, it holds
	\[
		\sum_{p=n_2(K)+1}^{\infty}\frac{1}{\mu_p}\le M,\quad
		\sum_{j=n_2(K)}^{\infty}\frac{1}{\lambda_j\pi_j}\sum_{p=j+1}^{\infty}\pi_p\le CK.
	\]
\end{lemma}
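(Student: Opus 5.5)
The plan is to reduce both sums, via the detailed-balance identity \eqref{LambdaPiMu}, to sums of $1/\mu_p$ weighted by products of the ratios $\lambda_i/\mu_i$. These products are controlled by \textbf{(A2)} above $Kx_2$ and by the cutoff $x_{\frac12}$. The sums of $1/\mu_p$ are controlled by comparison with the integral in \textbf{(A3)}.

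\emph{First sum.} Write $\mu_p=K\cdot\frac pK\,\widetilde{\mu}(p/K)$. Since $\widetilde{\mu}$ is positive and increasing, $x\mapsto [x\widetilde{\mu}(x)]^{-1}$ is decreasing. Hence for $p\ge n_2(K)+2$,
\[
\frac1{\mu_p}\le\int_{(p-1)/K}^{p/K}\frac{dx}{x\widetilde{\mu}(x)}.
\]
Since $(n_2(K)+1)/K> x_2$, summing over $p\ge n_2(K)+2$ gives at most $\int_{x_2}^\infty dx/(x\widetilde{\mu}(x))$. The single remaining term satisfies
\[
\frac1{\mu_{n_2(K)+1}}=\frac1{(n_2(K)+1)\,\widetilde{\mu}\bigl((n_2(K)+1)/K\bigr)}\le\frac{1}{Kx_2\,\widetilde{\mu}(x_2)}\le\frac1{\widetilde{\mu}(x_2)},
\]
once $Kx_2\ge1$; this holds after enlarging $K_0$. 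This yields the bound $M$.

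\emph{Second sum.} Detailed balance gives, for $p>j$,
\[
\frac{\pi_p}{\lambda_j\pi_j}=\frac1{\mu_p}\prod_{i=j+1}^{p-1}\frac{\lambda_i}{\mu_i}.
\]
For $j\ge n_2(K)$, every index $i\ge j+1\ge n_2(K)+1$ satisfies $i/K>x_2$. Therefore \textbf{(A2)} gives $\lambda_i/\mu_i<1$, and for $i\ge n_{\frac12}(K)+1$ it gives $\lambda_i/\mu_i\le\frac12$, by the definition of $x_{\frac12}$ as the largest root. I would split the outer sum at $N:=n_{\frac12}(K)$.

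For $j\ge N$, the inner sum is at most $\frac1{\mu_{j+1}}\sum_{q\ge0}2^{-q}\cdot\sup_{p>j}\frac{\mu_{j+1}}{\mu_p}$. Since $\mu$ is increasing, this is at most $2/\mu_{j+1}$. Summing over $j$ and using the first part gives a bound $\le 2M$.

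For $n_2(K)\le j<N$, there are $O(K)$ values of $j$, so it suffices to bound each inner sum by a constant. I would bound the products by $1$ for $j<p\le N+1$. Since $\mu_p\ge p\,\widetilde{\mu}(0)\ge Kx_2\widetilde{\mu}(0)/2$ in this range, these terms contribute at most $(N+1-j)/(cK)=O(1)$. For $p>N+1$, the product contains the factors $\lambda_i/\mu_i\le\frac12$ with $N<i<p$, so the tail is at most $\sum_{q\ge0}2^{-q}/(cK)=O(1/K)$. Hence the middle block is $O(K)$, and the total is $\le CK$.

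The only delicate point is the integer rounding at $n_2(K)$: the product bounds require $i/K>x_2$ for every $i\ge n_2(K)+1$, which holds because $n_2(K)+1>Kx_2$. Likewise, the cutoff must satisfy $x_2<x_{\frac12}$ with $N-n_2(K)=O(K)$. Beyond this bookkeeping, the argument is routine.
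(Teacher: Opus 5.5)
Your proof is correct. The paper does not prove this lemma itself; it imports it from \cite[Lemma~9.1]{Chazottes2016}. Your argument therefore supplies a self-contained proof. It uses the same mechanism the paper applies to $M_K$ in the proof of Proposition~\ref{proposition7.3}: detailed balance, the ratio bound $\lambda_p/\mu_p\le\frac12$ beyond $n_{\frac12}(K)$, monotonicity of $x\mapsto[x\widetilde{\mu}(x)]^{-1}$, and comparison with $\int dx/(x\widetilde{\mu}(x))$.

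The factor $K$ comes from your splitting of the outer sum at $n_{\frac12}(K)$, with each inner sum bounded by $O(1)$ on the $O(K)$ indices $n_2(K)\le j<n_{\frac12}(K)$. This is crude, but it is all the paper needs for $\|B\|\le C_RK$.

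Two minor points:
\begin{itemize}
\item The inequality $\lambda_i/\mu_i\le\frac12$ for $i\ge n_{\frac12}(K)+1$ follows from $\widetilde{\lambda}/\widetilde{\mu}\to0$ in \textbf{(A3)}, together with continuity and the maximality of $x_{\frac12}$. It does not come from \textbf{(A2)}.
\item No enlargement of $K_0$ is needed. For $K\ge K_0$ one already has $n_2(K)\ge2$, hence $Kx_2\ge2$. This gives both $Kx_2\ge1$ and $n_2(K)\ge Kx_2-1\ge Kx_2/2$, so your estimate holds for every $K\ge K_0$ as stated.
\end{itemize}
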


Consider the linear equations
\begin{equation}\label{f_n}
	a_nx_{n+1}+b_nx_{n-1}-(a_n+b_n)x_n=c_n,
\end{equation}
where $(a_n)_{n\ge1}$, $(b_n)_{n\ge1}$ and $(c_n)_{n\ge1}$ are sequences of real numbers, and $a_n$, $b_n$ are positive. Define
\[
\Pi_{r,s}=\prod_{j=s}^{r-1}\frac{b_j}{a_j}\quad\text{for}\quad r>s\quad\text{and}\quad \Pi_{s,s}=1.
\]
For $t\le s\le r$,
\[
\Pi_{r,s}=\frac{\Pi_{r,t}}{\Pi_{s,t}}.
\]

\begin{lemma}\label{solution of x_n}
(\cite{Chazottes2016}, Lemma 9.7) The solution of the homogeneous equation \eqref{f_n} when $c_n=0$ for all $n\ge1$ satisfies
\[
x_n=x_s+(x_{s+1}-x_s)\sum_{j=s}^{n-1}\Pi_{j+1,s+1},\quad\forall n\ge s.
\]
The general solution of equation \eqref{f_n} is given by
\[
x_n=x_s+(x_{s+1}-x_s)\sum_{j=s}^{n-1}\Pi_{j+1,s+1}+\sum_{j=s}^{n-1}\sum_{r=s+1}^{j}\frac{c_r}{a_r}\Pi_{j+1,r+1},\quad\forall n\ge s.
\]
If the series $\sum\limits_{r=s}^{\infty}\frac{c_r}{a_r\Pi_{r+1,s}}$ converges, it can also be written as
\[
x_n=x_s+\widetilde{A}_s\sum_{j=s}^{n-1}\Pi_{j+1,s}-\sum_{j=s}^{n-1}\sum_{r=j+1}^{\infty}\frac{c_r}{a_r\Pi_{r+1,j+1}},\quad\forall n\ge s,
\]
for a constant $\widetilde{A}_s$.
\end{lemma}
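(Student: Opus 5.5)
The statement is the variation-of-constants formula for the second-order recurrence \eqref{f_n}. My plan is to reduce it to a first-order recurrence for the increments $y_n:=x_{n+1}-x_n$. Rewriting \eqref{f_n} as $a_n(x_{n+1}-x_n)-b_n(x_n-x_{n-1})=c_n$ gives
\[
y_n=\frac{b_n}{a_n}y_{n-1}+\frac{c_n}{a_n}.
\]

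\textbf{Homogeneous case.} When $c_n=0$, iterating from $y_s=x_{s+1}-x_s$ gives $y_j=y_s\prod_{i=s+1}^{j}b_i/a_i=y_s\Pi_{j+1,s+1}$, using the convention $\Pi_{s+1,s+1}=1$. Summing $x_n-x_s=\sum_{j=s}^{n-1}y_j$ yields the first formula.

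\textbf{General case.} Variation of constants (equivalently, induction on $j$) gives
\[
y_j=y_s\Pi_{j+1,s+1}+\sum_{r=s+1}^{j}\frac{c_r}{a_r}\prod_{i=r+1}^{j}\frac{b_i}{a_i}=y_s\Pi_{j+1,s+1}+\sum_{r=s+1}^{j}\frac{c_r}{a_r}\Pi_{j+1,r+1}.
\]
The induction step is a single multiplication by $b_{j+1}/a_{j+1}$ followed by adding $c_{j+1}/a_{j+1}$. Summing over $s\le j\le n-1$ then gives the second formula.

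\textbf{Alternative form.} For the third formula I use the quotient identity $\Pi_{j+1,r+1}=\Pi_{j+1,s}/\Pi_{r+1,s}$, valid for $s\le r+1\le j+1$. Substituting it, the inner sum becomes
\[
\Pi_{j+1,s}\sum_{r=s+1}^{j}\frac{c_r}{a_r\Pi_{r+1,s}}.
\]
Assuming the series $S:=\sum_{r\ge s}c_r/(a_r\Pi_{r+1,s})$ converges, I write the partial sum up to $j$ as $S-\frac{c_s}{a_s\Pi_{s+1,s}}-\sum_{r>j}\frac{c_r}{a_r\Pi_{r+1,s}}$. The tail term times $\Pi_{j+1,s}$ equals $\sum_{r\ge j+1}c_r/(a_r\Pi_{r+1,j+1})$, again by the quotient identity.

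Next I combine $y_s\Pi_{j+1,s+1}=y_s\Pi_{j+1,s}\,a_s/b_s$ with the constant part. This gives $y_j=\widetilde A_s\Pi_{j+1,s}-\sum_{r\ge j+1}c_r/(a_r\Pi_{r+1,j+1})$, where
\[
\widetilde A_s:=y_s\frac{a_s}{b_s}+S-\frac{c_s}{a_s\Pi_{s+1,s}}.
\]
Summing over $j$ gives the third formula.

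The main difficulty is purely bookkeeping. I need to keep the index conventions for $\Pi$ straight, in particular $\Pi_{s+1,s}=b_s/a_s$ and the empty-product cases $j=s$. I also need to check that the $j=s$ term is consistent: there the inner sum is empty, and $y_s=\widetilde A_s\Pi_{s+1,s}-\sum_{r>s}\cdots$ holds by the definition of $\widetilde A_s$. I would verify this directly at the end.
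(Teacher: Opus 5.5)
Your proposal is correct. The reduction to the first-order recurrence $y_n=(b_n/a_n)y_{n-1}+c_n/a_n$ for the increments, the induction for $y_j$, and the tail rearrangement via $\Pi_{j+1,s}/\Pi_{r+1,s}=1/\Pi_{r+1,j+1}$ all check out, including the $j=s$ case. The paper gives no proof of its own and simply imports the lemma from \cite{Chazottes2016}. Your argument is the standard discrete variation-of-constants derivation, and it has the advantage of making the constant explicit: using the equation at $n=s$, it simplifies to $\widetilde A_s=(a_sy_s-c_s)/b_s+S=x_s-x_{s-1}+\sum_{r\ge s}c_r/(a_r\Pi_{r+1,s})$.
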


For $n> m$ let
\[
\Gamma_{n,m}=\prod_{j=m}^{n-1}\frac{\mu_j}{\lambda_j}\quad\text{with}\quad\Gamma_{m,m}=1.
\]

\begin{lemma}\label{Gamma n m=sqrt}
(\cite{Chazottes2016} Lemma 9.4) For all $m,n\in\mathbb{N}^*$, $m< n$, there holds
\[
\Gamma_{n,m}=\sqrt{\frac{\mu_m}{\lambda_m}\frac{\lambda_n}{\mu_n}}e^{K\left(H\left(\frac{n}{K}\right)-H\left(\frac{m}{K}\right)\right)+\frac{1}{K}c(m,n,K)}.
\]
where $H$ is defined in \eqref{Hx} and $\sup\limits_{m,n,K}|c(m,n,K)|<\infty.$
\end{lemma}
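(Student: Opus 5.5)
The identity $\Gamma_{n,m}=\prod_{j=m}^{n-1}\mu_j/\lambda_j$ will be treated as a discrete Riemann sum for $K\bigl(H(n/K)-H(m/K)\bigr)$, corrected by a trapezoidal boundary term. Write $\ln\Gamma_{n,m}=\sum_{j=m}^{n-1}h(j/K)$, where $h=H'=\ln(\widetilde\mu/\widetilde\lambda)$. The factor $n$ cancels in the ratio $\mu_j/\lambda_j=\widetilde\mu(j/K)/\widetilde\lambda(j/K)$, so the product involves only the function $h$.

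The key step is the trapezoidal (Euler--Maclaurin) comparison on each cell $[j/K,(j+1)/K]$:
\[
K\int_{j/K}^{(j+1)/K}h(x)\,dx=\tfrac12\bigl(h(j/K)+h((j+1)/K)\bigr)+e_j,\qquad |e_j|\le \frac{C}{K^{2}}\sup_{[j/K,(j+1)/K]}|h''|.
\]
Here $h''=H'''$, which is controlled by assumption \textbf{(H)}: $|h''(x)|\le C(1+x^2)^{-1}$. Summing over $j=m,\dots,n-1$ gives
\[
K\bigl(H(n/K)-H(m/K)\bigr)=\sum_{j=m}^{n-1}h(j/K)+\tfrac12\bigl(h(n/K)-h(m/K)\bigr)+\sum_j e_j .
\]
Using $e^{-\frac12(h(n/K)-h(m/K))}=\sqrt{\frac{\mu_m}{\lambda_m}\frac{\lambda_n}{\mu_n}}$, this rearranges exactly into the claimed form, with $c(m,n,K)=-K\sum_j e_j$.

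It remains to bound $K\sum_j|e_j|$ uniformly in $m,n,K$. We have
\[
K\sum_j|e_j|\le \frac{C}{K}\sum_{j\ge0}\sup_{[j/K,(j+1)/K]}(1+x^2)^{-1}\le C\Bigl(\frac1K+\int_0^\infty\frac{dx}{1+x^2}\Bigr)+\frac{C}{K},
\]
which is bounded. The step from the sum to the integral is a standard comparison, valid because $(1+x^2)^{-1}$ is monotone on $[0,\infty)$: each supremum over a cell is at most the integral over the previous cell, plus one extra term. This decay control is the only delicate point, since $m$ and $n$ range over all of $\mathbb N^*$ and the number of cells can grow without bound. The weight $(1+x^2)$ in \textbf{(H)} is exactly what makes $\sum_j K^{-1}\sup|h''|$ a convergent Riemann sum rather than something of order $(n-m)/K$.

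One regularity detail must be handled: \textbf{(H)} gives $H$ three times differentiable, but not necessarily $C^3$. I would therefore use the integral-free mean-value form of the trapezoid error, which needs only $h'$ differentiable with $h''$ bounded on the cell, for instance via the Peano kernel, or via Taylor expansion of $h$ about the cell midpoint with the Lagrange remainder applied to $h'$. If this proves awkward, $h=\ln\widetilde\mu-\ln\widetilde\lambda$ is in fact $C^2$ by \textbf{(A1)}, so $h''$ is continuous and the classical estimate applies directly.
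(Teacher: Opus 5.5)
Your proof is correct: writing $\ln\Gamma_{n,m}=\sum_{j=m}^{n-1}h(j/K)$, applying the trapezoidal rule on each cell so that the two half-endpoint terms produce the prefactor $\sqrt{\mu_m\lambda_n/(\lambda_m\mu_n)}$, and using \textbf{(H)} to bound $K^{-1}\sum_j\sup_{[j/K,(j+1)/K]}|H'''|$ uniformly in $m,n,K$ is essentially the standard argument behind the cited result (\cite{Chazottes2016}, Lemma~9.4). The paper itself gives no proof and only quotes that lemma. Your observation that $h\in C^2$ by \textbf{(A1)} settles the regularity point, so the classical error term $-\frac{1}{12K^2}h''(\xi_j)$ applies directly.
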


Recall that $F(x)=H(0)-H(x)$, so $F(0)=0$, $F'(x_1)=F'(x_2)=0$,
$F''(x_1)>0$, and $F''(x_2)<0$. Under \textbf{(H)}, $F$ is
three times continuously differentiable. Let $m_K:=\lfloor\sqrt K\ln K\rfloor$.
We first estimate $\sum\limits_{n=1}^{\infty}\pi_n$ without imposing a
sign condition on $F(x_2)$. The two cases needed in the main text
then follow as immediate consequences.

\begin{lemma}\label{lem:total-reversible-weight}
Set $c:=F(x_2)$. There exist constants $c_*,C_*>0$ such that, for
all sufficiently large $K$,
\begin{equation}\label{eq:total-weight-general}
c_*\left(1+\frac{e^{cK}}{\sqrt K}\right)
\leq\sum_{n=1}^{\infty}\pi_n
\leq C_*\left(1+\frac{e^{cK}}{\sqrt K}\right).
\end{equation}
\end{lemma}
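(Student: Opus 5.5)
We prove the two-sided bound \eqref{eq:total-weight-general} by splitting $\sum_{n\ge1}\pi_n$ into four ranges: a boundary layer $1\le n\le n_0$ with $n_0$ a fixed integer, the bulk $n_0<n\le N:=n_{\frac12}(K)$, a Gaussian window $|n-n_2(K)|\le m_K$ inside the bulk, and the tail $n>N$. The starting point is the product identity $\pi_n=\frac{1}{\mu_1}\prod_{j=1}^{n-1}\frac{\lambda_j}{\mu_{j+1}}=\frac{1}{\mu_n}\Gamma_{n,1}^{-1}$. Lemma~\ref{Gamma n m=sqrt} with $m=1$ then gives, uniformly for $1\le n\le N$,
\[
\pi_n\asymp\frac{1}{n}\exp\Bigl\{K\Bigl[F\Bigl(\frac nK\Bigr)-F\Bigl(\frac1K\Bigr)\Bigr]\Bigr\}\asymp \frac1n\exp\Bigl\{KF\Bigl(\frac nK\Bigr)\Bigr\}.
\]
Here we use $\mu_n=n\widetilde\mu(n/K)$, the fact that the square-root prefactor is bounded above and below on $[0,x_{\frac12}+1]$, and $K[F(1/K)-F(0)]=O(1)$. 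This reduces the problem to a discrete Laplace estimate for $\sum_n n^{-1}e^{KF(n/K)}$.

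\emph{Lower bound.} The term $n=1$ gives $\pi_1=1/\mu_1=1/\widetilde\mu(1/K)\ge c>0$. This accounts for the constant $1$ in \eqref{eq:total-weight-general}. For the $e^{cK}/\sqrt K$ part, restrict to the window $|n-n_2(K)|\le\sqrt K$. There, Taylor expansion of $F$ at $x_2$ with $F'(x_2)=0$, together with the floor error of order $1/K$, gives $KF(n/K)\ge cK-C$. Hence each such $\pi_n\ge c'K^{-1}e^{cK}$, and summing about $\sqrt K$ terms yields at least $c'e^{cK}/\sqrt K$. Since every term is positive, both contributions can be added.

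\emph{Upper bound.} By \textbf{(A2)}, $F'=-h$ is negative on $(0,x_1)$, positive on $(x_1,x_2)$ and negative on $(x_2,\infty)$. Therefore $\max_{[0,x_{\frac12}]}F=\max\{F(0),F(x_2)\}=\max\{0,c\}$, and $F$ has no other near-maximal regions. Near $x=0$ we use $F(x)\le -c_1x$ for $x\le\delta$, which holds because $F'(0)=-h(0)<0$ by \textbf{(A1)}. This gives $\sum_{n\le\delta K}\frac1n e^{-c_1n}=O(1)$. The harmonic prefactor matters only here, and it is harmless because of the exponential decay. Near $x_2$ we use $F(x)\le c-c_2(x-x_2)^2$ for $|x-x_2|\le\delta$. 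Summing $K^{-1}e^{cK-c_2(n-n_2)^2/K}$ gives $O(e^{cK}/\sqrt K)$ by the Gaussian sum comparison already used in Lemma~\ref{u-u,Theta,Xi,GK,pi}. On the rest of $[\delta,x_{\frac12}]$, compactness and the strict monotonicity pattern give $F\le\max\{0,c\}-\eta$ for some $\eta>0$. At most $O(K)$ terms each of size $O(1)e^{K(\max\{0,c\}-\eta)}$ therefore contribute $o(1+e^{cK}/\sqrt K)$. Finally, for $n>N$ we have $\pi_{n+1}/\pi_n=\lambda_n/\mu_{n+1}\le\lambda_n/\mu_n\le\frac12$, because $\widetilde\mu$ is increasing and $\widetilde\lambda/\widetilde\mu\le\frac12$ beyond $x_{\frac12}$. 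So the tail is at most $2\pi_{N+1}$, and $\pi_{N+1}$ is exponentially smaller than $\max\{1,e^{cK}\}$ because $F(x_{\frac12})<\max\{0,c\}$.

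\emph{Main obstacle.} The delicate step is uniformity in the bulk estimate near the two endpoints. Lemma~\ref{Gamma n m=sqrt} is stated uniformly in $m,n$, but its square-root prefactor $\sqrt{\lambda_n/\mu_n}$ must stay bounded above and below. This holds on $[1,N+1]$, since $\widetilde\lambda/\widetilde\mu$ lies in $[\frac12,\cdot]$ and is continuous and positive on the compact range. Beyond $N+1$ we avoid the lemma altogether and use the geometric ratio bound instead. The other care point is the case $c\le0$, where the $x_2$ window contributes $O(e^{cK}/\sqrt K)\le O(1)$. In that case the two-sided bound is driven by the boundary term, and the same estimates still yield \eqref{eq:total-weight-general} with adjusted constants.
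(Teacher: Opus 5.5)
Your proposal is correct and follows essentially the paper's proof. Both arguments use the same split: a low-population block contributing $O(1)$ (with the lower bound coming from $\pi_1$), a Gaussian window at $x_2$ controlled through Lemma~\ref{Gamma n m=sqrt}, a compact remainder on which $F$ lies a fixed amount below its maximum, and a geometric tail. The only real difference is the low-population block. The paper bounds it directly by $\prod_{j\le n}\lambda_j/\mu_j\le q^n$ on $[0,\epsilon K)$, with $\epsilon$ chosen near $x_1$ so that $F(\epsilon)<c$. You instead push Lemma~\ref{Gamma n m=sqrt} down to $n=1$ and use $F(x)\le -c_1x$ with reference level $\max\{0,c\}$.

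One slip does not affect the argument: $\widetilde\lambda/\widetilde\mu$ need not be at least $\frac12$ on $[0,x_{\frac12}]$. It can be small near $0$, e.g.\ $s/(rab+s)$ in the example of Section~\ref{section of Application}. You only use that this ratio is bounded above and below there, which follows from continuity and positivity.
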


\begin{proof}
Since $F$ decreases on $(0,x_1)$ and increases on $(x_1,x_2)$,
choose $\epsilon\in(0,x_1)$ close enough to $x_1$ that
$F(\epsilon)<F(x_2)=c$. On $[0,\epsilon]$, there is a constant
$q\in(0,1)$ such that $\widetilde{\lambda}/\widetilde{\mu}\leq q$.
Consequently,
\[
\sum_{1\leq n<\lceil\epsilon K\rceil}\pi_n
=\sum_{1\leq n<\lceil\epsilon K\rceil}
\frac1{\lambda_n}\prod_{j=1}^{n}\frac{\lambda_j}{\mu_j}
\leq\frac1{\widetilde{\lambda}(0)}\sum_{n=1}^{\infty}\frac{q^n}{n}
\leq O(1).
\]
Also, $\pi_1=1/\widetilde{\mu}(1/K)$ is bounded below by a positive
constant for sufficiently large $K$.

Choose $R>x_2$ sufficiently large that
$\widetilde{\lambda}(x)/\widetilde{\mu}(x)\leq1/2$ for $x\geq R$.
For $\epsilon\leq n/K\leq R+1$, Lemma~\ref{Gamma n m=sqrt} gives
\begin{equation}\label{pi n=le}
\frac{c_*}{K}e^{KF\left(\frac nK\right)}
\leq\pi_n\leq\frac{C_*}{K}e^{KF\left(\frac nK\right)},
\end{equation}
where $c_*,C_*>0$ are independent of $n$ and $K$.
To see this, use $\pi_n=(\mu_n\Gamma_{n,1})^{-1}$,
$H(1/K)=H(0)+O(1/K)$, and the positive upper and lower bounds
for $\widetilde{\lambda}$ and $\widetilde{\mu}$ on the stated compact
interval.

Since $F'(x_2)=0$ and $F''(x_2)<0$, there are constants
$\delta,\beta_1,\beta_2>0$ such that
\[
c-\beta_2(x-x_2)^2\leq F(x)
\leq c-\beta_1(x-x_2)^2,
\quad |x-x_2|\leq\delta.
\]
Take $\delta$ small enough that $x_1<x_2-\delta$ and
$x_2+\delta<R$. By \eqref{pi n=le} and Gaussian-sum estimates,
\begin{equation}\label{eq:stable-weight-mass}
 c_*\frac{e^{cK}}{\sqrt K}
\leq\sum_{|n/K-x_2|\leq\delta}\pi_n
\leq C_*\frac{e^{cK}}{\sqrt K}.
\end{equation}
For the upper bound, sum
$\exp(-\beta_1(n-Kx_2)^2/K)$ over all integers, obtaining $O(\sqrt K)$.
For the lower bound, restrict to $|n-Kx_2|\leq\sqrt K$; there are
at least $\sqrt K$ such positive integers for large $K$, and each
Gaussian factor is bounded below by $e^{-\beta_2}$.

For later use, the same quadratic bound also implies, with
$N=n_2(K)$ and $m=m_K$, that there is $c'_2>0$ such that
\begin{equation}\label{Fx le x2-delta n2K-mK}
F(x)\leq c-\frac{c'_2(\ln K)^2}{K},
\quad
x\in\left[x_2-\delta,\frac{N-m}{K}\right]
\cup\left[\frac{N+m}{K},x_2+\delta\right],
\end{equation}
for all sufficiently large $K$. The integer rounding changes the
distance to $x_2$ by at most $1/K$ and is absorbed by reducing
$c'_2$.

On the remaining compact set
$[\epsilon,x_2-\delta]\cup[x_2+\delta,R+1]$, $F$ is bounded above
by $c-\eta$ for some $\eta>0$. Indeed, $F(\epsilon)<c$, $F$ is
strictly decreasing on $(0,x_1)$ and $(x_2,\infty)$, and it is
strictly increasing on $(x_1,x_2)$. Thus the sum of $\pi_n$ over
this compact region is at most $C e^{(c-\eta)K}$.

Finally, for any integer cutoff $r$ such that
$\lambda_j/\mu_j\leq1/2$ for $j>r$, monotonicity of $\lambda_n$
gives
\begin{equation}\label{pi n le x frac 1 2 infty}
\pi_n=\pi_r\frac{\lambda_r}{\lambda_n}
\prod_{j=r+1}^{n}\frac{\lambda_j}{\mu_j}
\leq\pi_r2^{-(n-r)},\quad n\geq r,
\quad\sum_{n=r}^{\infty}\pi_n\leq2\pi_r.
\end{equation}
This applies in particular to the cutoff $n_{\frac12}(K)$ used
in the main text. Taking $r=\lceil RK\rceil$ in the present proof,
\eqref{pi n=le} and $F(r/K)\leq F(R)<c$ show that this tail is at
most $O(1) 1/Ke^{(c-\eta)K}$, after reducing $\eta$ if necessary.

Combining the boundary estimate, the stable-region estimate
\eqref{eq:stable-weight-mass}, and the bounds on the remaining compact
region and the infinite tail proves \eqref{eq:total-weight-general}.
\end{proof}

\begin{lemma}\label{sum pi n F(x 2)>0}
Assume $c:=F(x_2)>0$. There exists $C_1>0$ such that, for all
sufficiently large $K$,
\[
C_1^{-1}\frac{e^{cK}}{\sqrt K}
\leq\sum_{n=1}^{\infty}\pi_n
\leq C_1\frac{e^{cK}}{\sqrt K}.
\]
\end{lemma}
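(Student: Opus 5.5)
This is a direct corollary of Lemma~\ref{lem:total-reversible-weight}, which already holds with no sign condition on $c=F(x_2)$:
\[
c_*\left(1+\frac{e^{cK}}{\sqrt K}\right)\le\sum_{n\ge1}\pi_n\le C_*\left(1+\frac{e^{cK}}{\sqrt K}\right)
\]
for all sufficiently large $K$. The only task is to show that, when $c>0$, the additive constant $1$ is dominated by $e^{cK}/\sqrt K$. The resulting factor of at most $2$ is then absorbed into the constant.

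For the lower bound, drop the term $1$:
\[
\sum_{n\ge1}\pi_n\ge c_*\frac{e^{cK}}{\sqrt K}.
\]

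For the upper bound, note that $c>0$ gives $e^{cK}/\sqrt K\to\infty$. Hence $e^{cK}/\sqrt K\ge1$ for all $K$ beyond some threshold depending only on $c$. Raising $K_0$ if needed, which the standing conventions allow, we get
\[
1+\frac{e^{cK}}{\sqrt K}\le 2\,\frac{e^{cK}}{\sqrt K}.
\]

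It then suffices to take
\[
C_1:=\max\{2C_*,\,c_*^{-1},\,1\}.
\]
This gives both $C_1^{-1}e^{cK}/\sqrt K\le\sum_n\pi_n$ and $\sum_n\pi_n\le C_1 e^{cK}/\sqrt K$.

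The real content is already contained in Lemma~\ref{lem:total-reversible-weight}. There the main work is the Laplace-type estimate \eqref{eq:stable-weight-mass} near $x_2$, combined with the bounds on the boundary layer, the compact region away from $x_2$, and the geometric tail. No additional obstacle is expected here beyond checking that the threshold for $K$ depends only on $c$. That holds because $c$ is a fixed constant determined by the rate functions.
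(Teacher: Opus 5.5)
Your proposal is correct and follows the paper's proof: both deduce the claim from Lemma~\ref{lem:total-reversible-weight} by noting that $K^{-1/2}e^{cK}\to\infty$ when $c>0$, so the additive constant is absorbed. Your explicit choice $C_1=\max\{2C_*,c_*^{-1},1\}$ simply spells out the constant bookkeeping that the paper leaves implicit.
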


\begin{proof}
By Lemma~\ref{lem:total-reversible-weight}, it is enough to note that
$K^{-1/2}e^{cK}\to\infty$ when $c>0$. Thus the constant term in
\eqref{eq:total-weight-general} is absorbed by $K^{-1/2}e^{cK}$.
\end{proof}

\begin{lemma}\label{sum pi n F(x 2)le0}
Assume $F(x_2)\leq0$. There is a constant $C_2>0$ such that, for all
sufficiently large $K$,
\[
C_2^{-1}\leq\sum_{n=1}^{\infty}\pi_n\leq C_2.
\]
\end{lemma}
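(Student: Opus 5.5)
This is a direct specialization of Lemma~\ref{lem:total-reversible-weight} to $c=F(x_2)\le0$. That lemma gives, for all sufficiently large $K$,
\[
c_*\left(1+\frac{e^{cK}}{\sqrt K}\right)\le\sum_{n\ge1}\pi_n\le C_*\left(1+\frac{e^{cK}}{\sqrt K}\right),
\]
with $c_*,C_*>0$ independent of $K$. The plan is to show that under $c\le0$ the bracket is squeezed between two positive constants.

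For the bracket, the key observation is that $c\le0$ forces $0<e^{cK}\le1$ for every $K>0$. Hence
\[
1\le 1+\frac{e^{cK}}{\sqrt K}\le 1+\frac1{\sqrt K}\le 2
\]
for all $K\ge1$. Inserting these bounds into the two-sided estimate yields $c_*\le\sum_n\pi_n\le 2C_*$. We then take $C_2:=\max\{2C_*,c_*^{-1}\}$, which gives $C_2^{-1}\le\sum_n\pi_n\le C_2$.

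Since the lemma is immediate once Lemma~\ref{lem:total-reversible-weight} is available, there is no real obstacle. The one point worth checking is that the constants $c_*,C_*$ in that lemma do not secretly depend on the sign of $c$. This holds because its proof never uses $c>0$. The boundary piece near $0$ gives a contribution bounded above by $O(1)$ and bounded below by $\pi_1\ge$ const. The stable-region piece is of order $e^{cK}/\sqrt K$, and the remaining regions are $O(e^{(c-\eta)K})$, which is bounded when $c\le0$. So the lower bound comes from $\pi_1$ and the upper bound from summing pieces that are each $O(1)$, uniformly in $K$.
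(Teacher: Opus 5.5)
Your proposal is correct and is exactly the paper's argument: apply Lemma~\ref{lem:total-reversible-weight} with $c=F(x_2)\le0$ and use $1\le 1+K^{-1/2}e^{cK}\le 2$ for $K\ge1$. Your extra check on the constants does no harm but is not needed, because $c=F(x_2)$ is a fixed number determined by the rate functions, so $c_*,C_*$ only have to be independent of $K$.
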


\begin{proof}
Apply Lemma~\ref{lem:total-reversible-weight} with $c=F(x_2)\leq0$.
Since $K^{-1/2}e^{cK}\leq K^{-1/2}$, the quantity
$1+K^{-1/2}e^{cK}$ is bounded above and below by positive constants
for $K\geq1$. The conclusion follows from
\eqref{eq:total-weight-general}.
\end{proof}

%\cite{allee1938social,amarasekare1998allee,kramer2009evidence,lewis1993allee,Thattai2001,Yaglom1947,yule1925mathematical}

% Add acknowledgments here, if applicable.

\end{document}